\documentclass[12pt]{article}
\usepackage{graphicx}
\usepackage{natbib} 
\usepackage{url} 
\usepackage{amssymb,amsmath,amsthm}
\usepackage{booktabs} 
\usepackage{multirow}
\usepackage{enumitem}
\usepackage{hyperref}

\newtheorem{theorem}{Theorem}
\newtheorem{corollary}{Corollary}

\newtheorem{lemma}{Lemma}
\newtheorem{proposition}{Proposition}

\newcommand{\Exp}[1]{{\rm{E}}[ #1 ]}

\newcommand{\Cov}[1]{{\rm{Cov}}[ #1 ]}
\newcommand{\tr}{\text{\rm trace}}

\newcommand{\bl}[1]{{\mathbf #1}}
\newcommand{\bs}[1]{\boldsymbol #1}

\addtolength{\oddsidemargin}{-.5in}%
\addtolength{\evensidemargin}{-.5in}%
\addtolength{\textwidth}{1in}%
\addtolength{\textheight}{1.3in}%
\addtolength{\topmargin}{-.8in}%

\begin{document}

\def\spacingset#1{\renewcommand{\baselinestretch}%
{#1}\small\normalsize} \spacingset{1}

\title{\bf Subscedastic weighted least squares estimates}
\author{Jordan Bryan,
    Haibo Zhou, 
    Didong Li\\
    Department of Biostatistics, University of North Carolina at Chapel Hill,\\  Chapel Hill, North Carolina 27599, U.S.A.}
\date{}
\maketitle

\bigskip

\begin{abstract}
In the heteroscedastic linear model, the weighted least squares (WLS) estimate of the model coefficients is more efficient than the ordinary least squares (OLS) estimate. However, the practical application of WLS is challenging because it requires knowledge of the error variances. Feasible weighted least squares (FLS) estimates, which use approximations of the variances when they are unknown, may either be more or less efficient than the OLS estimate depending on the quality of the approximation. A direct comparison between FLS and OLS has significant implications for the application of regression analysis in varied fields, yet such a comparison remains an unresolved challenge. In this study, we address this challenge by identifying the conditions under which FLS estimates using fixed weights demonstrate greater efficiency than the OLS estimate. These conditions provide guidance for the design of feasible estimates using random weights. They also shed light on how certain robust regression estimates behave with respect to the linear model with normal errors of unequal variance.
\end{abstract}

\section{Introduction}\label{sec:intro}

Accounting for heteroscedasticity to improve the precision of regression estimates is an old, but not outdated practice. Indeed, modern statistical methods are still being adapted to incorporate information about heterogeneous variance in outcome variables. For instance, \cite{shah_stabilized_2023} develop a consistent estimate of the error variance in a model for individualized treatment rules in order to stabilize their parameter estimates. In a different setting, \cite{bryan_routine_2023} and \cite{bryan_linear_2024} apply principles of variance estimation to devise more efficient estimates of water quality using fluorescence spectroscopy data. The question of how to address heteroscedasticity has continued to inspire new methodological developments primarily because, while classical least squares theory provides optimal estimates when the error variances are known, optimal procedures are more difficult to identify when the error variances must be estimated.

Such challenges arise even in the context of the standard linear model with independent errors:
\begin{equation}\label{eq:smod}
    \bl{y} = \bl{X} \bs{\beta} + \bs{\varepsilon},
\end{equation}
where $\bl{y} \in \mathbb{R}^n$, $\bl{X} \in \mathbb{R}^{n \times p}$ is known and full-rank, and
\begin{equation*}
    \Exp{\bs{\varepsilon}} = \bl{0},~\Cov{\bs{\varepsilon}} = \Exp{\bs{\varepsilon} \bs{\varepsilon}^\top} = \bs{\Omega} = \mathrm{diag}(\omega_1, \dots, \omega_n).
\end{equation*}
In this setting, the \textit{weighted least squares estimate} (WLS) $(\bl{X}^\top \bs{\Omega}^{-1}\bl{X})^{-1} \bl{X}^\top \bs{\Omega}^{-1} \bl{y}$ has minimum variance among all linear unbiased estimates of $\bs{\beta}$. However, computing the WLS estimate requires knowledge of $\bs{\Omega}$, which is unknown in practice. By contrast, the \textit{ordinary least squares (OLS) estimate}  $(\bl{X}^\top \bl{X})^{-1} \bl{X}^\top \bl{y}$ can be computed in practice, since it is a function of $\bl{X}$ and $\bl{y}$ alone. OLS, though, can be significantly less efficient than WLS if there is a high degree of heteroscedasticity. 

A so-called \textit{feasible weighted least squares (FLS) estimate} of $\bs{\beta}$ is obtained by plugging a computable estimate of $\bs{\Omega}$, denoted by $\tilde{\bs{\Omega}}$, into the vector-valued function $b_{\bl{X}} : \mathcal{D}_+^{n} \rightarrow \mathbb{R}^p$, defined as
\begin{equation}\label{eq:gls_estimate}
    b_{\bl{X}}(\tilde{\bs{\Omega}}) = (\bl{X}^\top \tilde{\bl{\Omega}}^{-1} \bl{X})^{-1} \bl{X}^\top \tilde{\bl{\Omega}}^{-1} \bl{y},
\end{equation}
where $\mathcal{D}_+^n$ denotes the set of $n \times n$ diagonal positive definite matrices. This function yields the OLS estimate, $b_{\bl{X}}(\bl{I}_n)$, and the WLS estimate, $b_{\bl{X}}(\bs{\Omega})$, as special cases. FLS estimates have the benefit of being computable, and they have the potential to be more efficient than the OLS estimate. However, they also have the potential to be arbitrarily less precise than the OLS estimate if $\tilde{\bs{\Omega}}$, the feasible substitute for $\bs{\Omega}$, is far from the truth. 

{As only feasible and ordinary least squares estimates are available in practical settings, it is important to understand when the extra effort of designing feasible weights leads to a gain in efficiency relative to OLS. However, FLS estimates using random weights generally depend on $\bl{y}$ in a non-linear fashion, which makes explicit derivation of their covariance matrices difficult. Perhaps in part because of this difficulty, one approach to dealing with heteroscedastic errors in the linear model has been to focus on estimating standard errors for the OLS estimate, which are asymptotically valid even when the form of heteroscedasticity is unknown. \cite{white_heteroskedasticity-consistent_1980} devised such standard errors, and subsequent work from \cite{arellano_computing_1987} extended this approach to the case of temporal dependence between errors corresponding to repeated measures. \cite{driscoll_consistent_1998} and \cite{vogelsang_heteroskedasticity_2012} address consistent standard error estimation under both temporal and spatial dependence. While this line of research has provided several clever means of asymptotically valid inference, it generally does not address the issue of point estimation for the linear model coefficients, as pointed out by \cite{romano_resurrecting_2017}. One exception is \cite{liang_longitudinal_1986}, although they address estimation efficiency through simulation rather than theory.}

{On the other hand, most of the FLS literature related to point estimation uses the WLS estimate $b_{\bl{X}}(\bs{\Omega})$, not the OLS estimate $b_{\bl{X}}(\bl{I}_n)$, as reference. Early work on the finite-sample properties of FLS established upper bounds on the inefficiency of the OLS estimate \citep{anderson_theory_1948, watson_linear_1967, watson_prediction_1972, knott_minimum_1975} or fixed-weight FLS estimates \citep{khatri_extensions_1981, wang_kantorovich-type_1989} relative to the WLS estimate. \cite{kurata_least_1996} used the Loewner partial order to bound the covariance matrix of a limited class of FLS estimates between two scalar multiples of the optimal $\Cov{b_{\bl{X}}(\bs{\Omega})}$. Other authors, including \cite{fuller_estimation_1978}, \cite{carroll_transformation_1988}, and \cite{hansen_asymptotic_2007} examined the asymptotic properties of feasible weighted and generalized least squares estimates. These authors use the optimal $b_{\bl{X}}(\bs{\Omega})$ as the point of reference in the sense that they consistently estimate the true error covariance matrix, so that their estimates have the same asymptotic properties as $b_{\bl{X}}(\bs{\Omega})$.}

{In this article, we offer new insights into the comparison of FLS estimates to the OLS estimate in the context of point estimation. Our starting point is the finite-sample perspective initiated by \cite{szroeter_exact_1994}, in which we consider fixed-weight FLS estimates. We then use these results to draw conclusions about certain large-sample cases. Our primary contribution is to characterize a class of FLS estimates, which are guaranteed to be more efficient than the OLS estimate. We then demonstrate that certain robust regression estimates that are simple to compute satisfy a type of oracle efficiency property with respect to this class. This latter result puts our work in conversation with recent articles \citep{feng_optimal_2024, wiens_ignore_2024} that make explicit comparisons between OLS and alternatives; however, these articles do not discuss heteroscedasticity per-se.}

{In the case of a single regressor, we use variance as the measure of an estimate's efficiency. In the case of multiple regressors, we primarily work with the \textit{generalized variance}, although we show that our results also apply to the \textit{total variance} as well}. Following \cite{bloomfield_inefficiency_1975}, we define the generalized variance of a multivariate estimate as the determinant of its covariance matrix. For any FLS estimate that uses non-random weights $\tilde{\bs{\Omega}}$, the covariance matrix of $b_{\bl{X}}(\tilde{\bs{\Omega}})$ with respect to \eqref{eq:smod} will be an instance of the matrix-valued function $H_{\bl{X}} : \mathcal{D}_+^{n} \times \mathcal{D}_+^{n} \rightarrow \mathcal{S}_+^p$ defined as
\begin{equation}\label{eq:hform}
    H_{\bl{X}}(\tilde{\bs{\Omega}}, \bs{\Omega}) = (\bl{X}^\top \tilde{\bs{\Omega}}^{-1}\bl{X})^{-1} \bl{X}^\top \tilde{\bs{\Omega}}^{-1} \bs{\Omega} \tilde{\bs{\Omega}}^{-1} \bl{X} (\bl{X}^\top \tilde{\bs{\Omega}}^{-1}\bl{X})^{-1},
\end{equation}
where $\mathcal{S}_+^p$ refers to the set of $p \times p$ positive definite matrices. The FLS estimates forming our subclass of interest therefore take the form
\begin{equation*}
    b_{\bl{X}}(\tilde{\bs{\Omega}}),~ \tilde{\bs{\Omega}} \in \mathcal{C}^p_{\bs{\Omega}},
\end{equation*}
where
\begin{equation}\label{eq:cpdef}
    \mathcal{C}^p_{\bs{\Omega}} = \{ \tilde{\bs{\Omega}} \in \mathcal{D}_+^n : | H_{\bl{X}}(\bs{\Omega}, \bs{\Omega})| \leq | H_{\bl{X}}(\tilde{\bs{\Omega}}, \bs{\Omega}) | \leq | H_{\bl{X}}(\bl{I}_n, \bs{\Omega}) |,~\forall \bl{X} \in \mathbb{R}^{n \times p} \}.
\end{equation}
We call $\mathcal{C}^p_{\bs{\Omega}}$ a \textit{subscedastic set} because its elements are---in a sense made precise in Section \ref{sec:properties}---less dispersed than $\bs{\Omega}$. That $\mathcal{C}^p_{\bs{\Omega}}$ is non-empty is guaranteed by a matrix Cauchy inequality \citep{marshall_matrix_1990}, which says
\begin{equation*}
    \Cov{b_{\bl{X}}(\bl{\Omega})} = H_{\bl{X}}(\bs{\Omega}, \bs{\Omega}) \preceq H_{\bl{X}}(\bl{I}_n, \bs{\Omega}) = \Cov{b_{\bl{X}}(\bl{I}_n)},~\forall \bl{X} \in \mathbb{R}^{n \times p},
\end{equation*}
where $\preceq$ denotes the Loewner partial order on $\mathcal{S}_+^p$. Hence, both $\bs{\Omega}$ and $\bl{I}_n$ are in $\mathcal{C}^p_{\bs{\Omega}}$. In fact, a consequence of the Gauss-Markov Theorem \citep{aitken_ivleast_1936} is that $H_{\bl{X}}(\bs{\Omega}, \bs{\Omega}) \preceq H_{\bl{X}}(\tilde{\bs{\Omega}}, \bs{\Omega}),~\forall \bl{X} \in \mathbb{R}^{n \times p},~\forall \tilde{\bs{\Omega}} \in \mathcal{D}_n^+$, so that $\mathcal{C}_{\bs{\Omega}}^p$ may be equivalently defined using only the second inequality in \eqref{eq:cpdef}. 

In Section \ref{sec:properties} of this article, we examine the subscedastic set $\mathcal{C}^p_{\bs{\Omega}}$ in the case of a single regressor and provide a sufficient condition so that $\tilde{\bs{\Omega}} \in \mathcal{C}^1_{\bs{\Omega}}$. Building on this result, we then develop the necessary and sufficient condition so that $\tilde{\bs{\Omega}} \in \mathcal{C}^p_{\bs{\Omega}}$ for $1 \leq p < n$. In Section \ref{sec:impl} we discuss the implications of these results for estimation in the linear model. In particular, we show directly that a FLS estimate need not use consistent weights in order to outperform the OLS estimate. In Section \ref{sec:robust}, we provide a link between the results of Section 2 and the asymptotic variance of two robust regression estimates, one of which is derived from the $t$-distribution. We then conduct numerical experiments in Section \ref{sec:num} that demonstrate how the $t$ estimate behaves in the context of the normal linear model with heteroscedasticity. We see that it performs favorably relative to parametric FLS estimates, especially when the parametric form of heteroscedasticity is misspecified. Finally, in Section \ref{sec:disc}, we conclude with a discussion of possible extensions to this work. The proofs of all results may be found in Section \ref{asec:proofs} of the Appendix.

\section{Properties of subscedastic weights}\label{sec:properties}

The subscedastic set $\mathcal{C}^p_{\bs{\Omega}}$ is defined using the determinant inequality
\begin{equation}\label{eq:detineq}
|H_{\bl{X}}(\tilde{\bs{\Omega}}, \bs{\Omega})| \leq |H_{\bl{X}}(\bl{I}_n, \bs{\Omega})|.
\end{equation}
Because the determinant is invariant to multiplication of its matrix argument by any $p \times p$ orthogonal matrix, the inequality \eqref{eq:detineq} is unchanged when $\bl{U}$, the matrix whose columns are the left singular vectors of $\bl{X}$, is substituted for $\bl{X}$ itself. In this section, it will be convenient to work in terms of $\bl{U}$ rather than $\bl{X}$.

To build intuition for the properties of subscedastic sets, consider the case of a single regressor so that $p=1$. Let $\mathcal{V}^n$ denote the set of $n$-dimensional unit vectors, let $\bl{u} \in \mathcal{V}^n$ and define the functions
\begin{equation*}
    \begin{aligned}
        \mathrm{e}_{\bl{u}}(\bs{\Omega}) &:= \sum_{i=1}^n u_i^2 \omega_i, \\
        \mathrm{c}_{\bl{u}}(\tilde{\bs{\Omega}}, \bs{\Omega}) &:= \sum_{i=1}^n u_i^2(\tilde{\omega}_i - \mathrm{e}_{\bl{u}}(\tilde{\bs{\Omega}}))(\omega_i - \mathrm{e}_{\bl{u}}(\bs{\Omega})).
    \end{aligned}
\end{equation*}
Because $\bl{u}$ is a unit vector, the functions $\mathrm{e}_{\bl{u}}$ and $ \mathrm{c}_{\bl{u}}$ behave, respectively, like the expectation and covariance functions of discrete random variables with supports determined by the diagonal entries of $\tilde{\bs{\Omega}}, \bs{\Omega}$ and probability mass functions determined by the squared magnitude of the entries of $\bl{u}$. By rearranging terms in the inequality $H_{\bl{u}}(\tilde{\bs{\Omega}}, \bs{\Omega}) \leq H_{\bl{u}}(\bl{I}_n, \bs{\Omega})$, we can express the condition $\tilde{\bs{\Omega}} \in \mathcal{C}^1_{\bs{\Omega}}$ in terms of the functions $\mathrm{e}_{\bl{u}}$ and $\mathrm{c}_{\bl{u}}$ as follows:
\begin{equation}\label{eq:rearr}
        \tilde{\bs{\Omega}} \in \mathcal{C}^1_{\bs{\Omega}} \iff \mathrm{c}_{\bl{u}}(\tilde{\bs{\Phi}}, \tilde{\bs{\Phi}}\bs{\Omega}) + \mathrm{e}_{\bl{u}}(\tilde{\bs{\Phi}}) \mathrm{c}_{\bl{u}}(\tilde{\bs{\Phi}}, \bs{\Omega}) \leq 0,~\forall \bl{u} \in \mathcal{V}^n,
\end{equation}
where $\tilde{\bs{\Phi}} = \tilde{\bs{\Omega}}^{-1}$. This formulation is useful because it points to an intuitive sufficient condition so that $\tilde{\bs{\Omega}} \in \mathcal{C}^1_{\bs{\Omega}}$, namely that both $\mathrm{c}_{\bl{u}}(\tilde{\bs{\Phi}}, \tilde{\bs{\Phi}}\bs{\Omega})$ and $\mathrm{c}_{\bl{u}}(\tilde{\bs{\Phi}}, \bs{\Omega})$ are non-positive for all $\bl{u} \in \mathcal{V}^n$. Now, for any $\bl{u} \in \mathcal{V}^n$, $\mathrm{c}_{\bl{u}}(\tilde{\bs{\Phi}}, \bs{\Omega})$ will be non-positive if the diagonal entries of $\tilde{\bs{\Omega}}$ are in monotone non-decreasing relation with the diagonal entries of $\bs{\Omega}$ (\citealp{schmidt_inequalities_2014}, Corollary 3.1). The condition in the following proposition is sufficient so that $\mathrm{c}_{\bl{u}}(\tilde{\bs{\Phi}}, \tilde{\bs{\Phi}}\bs{\Omega})$ is also non-positive for any choice of $\bl{u}$, implying that $\tilde{\bs{\Omega}} \in \mathcal{C}^1_{\bs{\Omega}}$.
\begin{proposition}\label{prop:mono}
    Let $g : \mathbb{R}_+ \rightarrow \mathbb{R}_{+}$ be a function such that for each $\omega > 0$
    \begin{equation}\label{eq:grm}\tag{GRM}
        \begin{aligned} 
            1. && &g(\omega) \leq g(\omega'),~\forall \omega' \geq \omega \\
            2. && &g(\omega') / \omega' \leq  g(\omega) / \omega,~\forall \omega' \geq \omega.
        \end{aligned}
    \end{equation}
     Then if $\tilde{\omega}_i = g(\omega_i)$ for $i \in \{1, \dots, n\}$, $\tilde{\bs{\Omega}} \in \mathcal{C}^1_{\bs{\Omega}}$.
\end{proposition}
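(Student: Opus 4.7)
The plan is to apply the sufficient condition already highlighted in the discussion preceding the proposition. By (\ref{eq:rearr}), $\tilde{\bs{\Omega}} \in \mathcal{C}^1_{\bs{\Omega}}$ whenever $\mathrm{c}_{\bl{u}}(\tilde{\bs{\Phi}}, \tilde{\bs{\Phi}}\bs{\Omega}) + \mathrm{e}_{\bl{u}}(\tilde{\bs{\Phi}}) \mathrm{c}_{\bl{u}}(\tilde{\bs{\Phi}}, \bs{\Omega}) \leq 0$ for every $\bl{u} \in \mathcal{V}^n$. Since $\mathrm{e}_{\bl{u}}(\tilde{\bs{\Phi}})$ is a convex combination of the strictly positive numbers $\tilde{\phi}_i = 1/g(\omega_i)$ and is therefore itself positive, it suffices to verify that both of the covariance-like quantities $\mathrm{c}_{\bl{u}}(\tilde{\bs{\Phi}}, \bs{\Omega})$ and $\mathrm{c}_{\bl{u}}(\tilde{\bs{\Phi}}, \tilde{\bs{\Phi}}\bs{\Omega})$ are non-positive. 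The role of (GRM) is to ensure exactly this: condition 1 handles the first term, condition 2 handles the second.

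The workhorse is the weighted Chebyshev-type inequality cited as Corollary 3.1 of \cite{schmidt_inequalities_2014}: if two sequences $(a_i)$ and $(b_i)$ are oppositely ordered as functions of a common index parameter, then $\sum_i u_i^2 (a_i - \bar a)(b_i - \bar b) \leq 0$ for any unit vector $\bl{u}$, where $\bar a, \bar b$ denote the $u$-weighted means. For $\mathrm{c}_{\bl{u}}(\tilde{\bs{\Phi}}, \bs{\Omega})$, take $a_i = \tilde{\phi}_i = 1/g(\omega_i)$ and $b_i = \omega_i$; condition 1 of (GRM) says $g$ is non-decreasing, so $a_i$ is a non-increasing function of $b_i$, and the inequality gives $\mathrm{c}_{\bl{u}}(\tilde{\bs{\Phi}}, \bs{\Omega}) \leq 0$.

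For $\mathrm{c}_{\bl{u}}(\tilde{\bs{\Phi}}, \tilde{\bs{\Phi}}\bs{\Omega})$, the relevant sequences are $a_i = 1/g(\omega_i)$ and $b_i = \omega_i/g(\omega_i)$. Indexing by $\omega_i$, condition 1 makes $a_i$ non-increasing in $\omega_i$ while condition 2, which asserts that $g(\omega)/\omega$ is non-increasing, makes $b_i = \omega_i/g(\omega_i)$ non-decreasing in $\omega_i$. Thus $(a_i)$ and $(b_i)$ are oppositely ordered when the indices are sorted by $\omega_i$, and the Chebyshev-type inequality yields $\mathrm{c}_{\bl{u}}(\tilde{\bs{\Phi}}, \tilde{\bs{\Phi}}\bs{\Omega}) \leq 0$. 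Combining the two bounds via (\ref{eq:rearr}) proves the proposition.

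I do not anticipate a serious obstacle here: (\ref{eq:rearr}) is already supplied and Schmidt's Corollary is off-the-shelf. The only real conceptual content is recognizing that condition 2 of (GRM), which at first glance looks like an ad hoc supplement, is precisely the monotonicity of $\omega/g(\omega)$ in $\omega$ needed to align the sequences $\tilde{\phi}_i$ and $\tilde{\phi}_i \omega_i$ in opposite orders. This explains why the hypothesis is stated in terms of the ratio $g(\omega)/\omega$ rather than as some further condition on $g$ alone, and it is the one step where the bookkeeping is not automatic.
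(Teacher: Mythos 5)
Your proposal is correct and follows essentially the same route as the paper's own proof: invoke the equivalence \eqref{eq:rearr}, then apply the Chebyshev-type inequality (Corollary 3.1 of \cite{schmidt_inequalities_2014}) to show that condition 1 of \eqref{eq:grm} forces $\mathrm{c}_{\bl{u}}(\tilde{\bs{\Phi}}, \bs{\Omega}) \leq 0$ and condition 2 forces $\mathrm{c}_{\bl{u}}(\tilde{\bs{\Phi}}, \tilde{\bs{\Phi}}\bs{\Omega}) \leq 0$, so the sum is non-positive for every unit vector. Your explicit identification of the oppositely ordered sequences $1/g(\omega_i)$ and $\omega_i/g(\omega_i)$ merely spells out a step the paper leaves implicit.
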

In other words, for any $\bl{u} \in \mathcal{V}^n$, it is guaranteed that $H_{\bl{u}}(\tilde{\bs{\Omega}}, \bs{\Omega}) \leq H_{\bl{u}}(\bl{I}_n, \bs{\Omega})$ if the ranks of the diagonal entries of $\tilde{\bs{\Omega}}$ and the ranks of the diagonal entries of $\tilde{\bs{\Omega}}^{-1} \bs{\Omega}$ agree with those of $\bs{\Omega}$. Another way of phrasing \eqref{eq:grm} is to say that the function $g(\omega)$ is non-decreasing, while the function $g(\omega)/\omega$ is non-increasing. Functions that satisfy \eqref{eq:grm} include, but are not limited to, fractional powers ($g(\omega) = \omega^{1/q},~q \geq 1$), translations by a positive constant ($g(\omega) = \omega + \lambda,~\lambda \in \mathbb{R}_+$), and functions of the form $g(\omega) = \log(\omega + \lambda),~\lambda > 1$. Some examples are displayed in Figure \ref{fig:exfns}. 

{Functions of this type appear in the literature on robust covariance estimation \citep{maronna_robust_1976, romanov_tylers_2023}. By \cite{rosenbaum_sub-additive_1950} Theorem 1.4.3, they constitute a subset of the class of \textit{subadditive} functions on $\mathbb{R}_+$. While they may be unbounded from above, a simple transformation can produce bounded versions of such functions.
\begin{proposition}\label{prop:bd}
    If $g : \mathbb{R}_+ \rightarrow \mathbb{R}_+$ is a function satisfying \eqref{eq:grm}, then for non-negative constants $\lambda, \gamma$ the function
    \begin{equation}\label{eq:boundedfn}
        f(\omega) = \frac{1}{1/g(\omega) + \lambda} + \gamma
    \end{equation}
    also satisfies it, where $f$ has the additional property of being bounded from above and below by $1/\lambda + \gamma$ and $\gamma$, respectively.
\end{proposition}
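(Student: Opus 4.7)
The plan is to rewrite $f$ in a form whose monotonicity properties follow transparently from those of $g$, and then read off the bounds. Specifically, I would first clear the nested reciprocal and write
\begin{equation*}
    f(\omega) = \frac{g(\omega)}{1 + \lambda g(\omega)} + \gamma,
\end{equation*}
which is well-defined since $g$ is strictly positive on $\mathbb{R}_+$.

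For the first condition in \eqref{eq:grm}, I would observe that $h(x) = x/(1+\lambda x)$ is strictly increasing on $\mathbb{R}_+$ (its derivative is $(1+\lambda x)^{-2} > 0$), so $h \circ g$ is non-decreasing by the first condition on $g$, and adding the constant $\gamma$ preserves this. For the second condition, I would write
\begin{equation*}
    \frac{f(\omega)}{\omega} = \frac{1}{\omega/g(\omega) + \lambda \omega} + \frac{\gamma}{\omega}.
\end{equation*}
Since $g(\omega)/\omega$ is non-increasing on $\mathbb{R}_+$ by hypothesis and $g > 0$, its reciprocal $\omega/g(\omega)$ is non-decreasing; the term $\lambda \omega$ is clearly non-decreasing; hence their sum is non-decreasing and its reciprocal is non-increasing. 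Together with the non-increasing function $\gamma/\omega$, this exhibits $f(\omega)/\omega$ as a sum of two non-increasing functions, which is therefore non-increasing.

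For the bounds, I would simply note that $1/g(\omega) > 0$ gives $0 < 1/(1/g(\omega) + \lambda) < 1/\lambda$, so that $\gamma < f(\omega) < 1/\lambda + \gamma$. The only step that requires any care at all is the second monotonicity property, and even there the main trick is to avoid differentiating $f$ directly and instead decompose $f(\omega)/\omega$ into two manifestly non-increasing pieces; I do not anticipate any real obstacle, since the entire argument reduces to using the reciprocal-reversal of monotonicity for positive functions and the fact that sums of monotone functions of the same type remain monotone.
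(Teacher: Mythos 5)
Your proof is correct and follows essentially the same route as the paper: the same decomposition $f(\omega)/\omega = 1/\{\omega/g(\omega) + \lambda\omega\} + \gamma/\omega$ drives the second \eqref{eq:grm} property, and the first property and the bounds follow from monotonicity of $g$ just as in the paper's argument. Your justification via the increasing map $x \mapsto x/(1+\lambda x)$ and the direct inequality $0 < 1/\{1/g(\omega)+\lambda\} < 1/\lambda$ are only cosmetic variants of the paper's limit-based phrasing.
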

Functions satisfying \eqref{eq:grm} also have a connection with the theory of \textit{majorization}, which has many applications to matrix trace and determinant inequalities. Letting $\bl{a} \prec \bl{b}$ denote that $\bl{b} \in \mathbb{R}^n$ majorizes $\bl{a} \in \mathbb{R}^n$ (see \citealp{marshall_inequalities_2011} for a definition), we have the following result:
\begin{proposition}\label{prop:majorization}
    Let $g$ be a function satisfying \eqref{eq:grm}. Then
    \begin{equation*}
        \left(\frac{g(\omega_1)}{\sum_{i=1}^n g(\omega_i)}, \dots, \frac{g(\omega_n)}{\sum_{i=1}^n g(\omega_i)} \right) \prec \left( \frac{\omega_1}{\sum_{i=1}^n \omega_i}, \dots, \frac{\omega_n}{\sum_{i=1}^n \omega_i}\right).
    \end{equation*}
\end{proposition}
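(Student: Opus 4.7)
The plan is to unpack the definition of majorization into partial-sum inequalities on sorted entries and then reduce these to an elementwise consequence of the two \eqref{eq:grm} conditions. Since the statement is invariant under a common permutation of the coordinates, I first assume without loss of generality that $\omega_1 \geq \omega_2 \geq \dots \geq \omega_n$. By condition~1 of \eqref{eq:grm}, $g$ is non-decreasing, so $g(\omega_1) \geq g(\omega_2) \geq \dots \geq g(\omega_n)$ as well. Hence both probability vectors in the proposition are already listed in decreasing order, both have entries summing to $1$, and the majorization reduces to showing that for each $k \in \{1, \dots, n-1\}$,
\[
\sum_{i=1}^k \frac{g(\omega_i)}{\sum_{j=1}^n g(\omega_j)} \;\leq\; \sum_{i=1}^k \frac{\omega_i}{\sum_{j=1}^n \omega_j}.
\]

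Next, I would clear denominators and split each full sum at $k$. Writing $S_\omega = \sum_j \omega_j$ and $S_g = \sum_j g(\omega_j)$, the displayed inequality is equivalent to $S_g \sum_{i \leq k} \omega_i \geq S_\omega \sum_{i \leq k} g(\omega_i)$. Expanding $S_\omega$ and $S_g$ into sums over $\{1,\dots,k\}$ and $\{k+1,\dots,n\}$, the cross-terms in which both indices lie in $\{1,\dots,k\}$ cancel on the two sides, and the claim reduces to
\[
\sum_{i=1}^k \sum_{j=k+1}^n \bigl(\omega_i\, g(\omega_j) - \omega_j\, g(\omega_i)\bigr) \;\geq\; 0.
\]

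To finish, I would verify that every summand is individually non-negative. For $i \leq k < j$, the decreasing ordering gives $\omega_i \geq \omega_j$, and then condition~2 of \eqref{eq:grm}, applied with $\omega = \omega_j$ and $\omega' = \omega_i$, yields $g(\omega_i)/\omega_i \leq g(\omega_j)/\omega_j$. Multiplying through by $\omega_i \omega_j > 0$ gives exactly $\omega_i g(\omega_j) \geq \omega_j g(\omega_i)$, as needed. There is no serious obstacle here: the whole argument turns on recognizing that \eqref{eq:grm} is tailored to this conclusion---condition~1 guarantees that the two vectors sort in a common order, so no rearrangement inequality is required, while condition~2 supplies precisely the pairwise comparison between larger and smaller $\omega_i$ that drives the partial-sum bound.
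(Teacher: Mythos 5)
Your proof is correct, but it takes a more self-contained route than the paper. The paper's argument consists of the same initial observation you make---after sorting the $\omega_i$ in decreasing order, condition~2 of \eqref{eq:grm} implies that $\omega_{(i)}/g(\omega_{(i)})$ is monotone along the sorted sequence---and then simply cites Proposition~B.1.b of Marshall, Olkin and Arnold to conclude the majorization. What you have done, in effect, is reprove that cited proposition from scratch: reducing majorization to the top-$k$ partial-sum inequalities (using condition~1 to guarantee both vectors sort in the same order, so no rearrangement step is needed), clearing denominators, cancelling the common $\{1,\dots,k\}\times\{1,\dots,k\}$ cross terms, and bounding each remaining term $\omega_i g(\omega_j) - \omega_j g(\omega_i)$ for $i \leq k < j$ by the pairwise form of condition~2. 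Each step checks out, including the reduction of $S_g\sum_{i\leq k}\omega_i \geq S_\omega \sum_{i\leq k} g(\omega_i)$ to the double sum over cross pairs. The trade-off is the usual one: the paper's proof is two lines but leans on an external reference, while yours is longer but elementary, requires no citation, and makes transparent exactly where each of the two \eqref{eq:grm} conditions is used (monotonicity of $g$ for the common ordering, monotonicity of $g(\omega)/\omega$ for the cross-term inequality).
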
}

\begin{figure}
    \centering
    \includegraphics[scale=0.725]{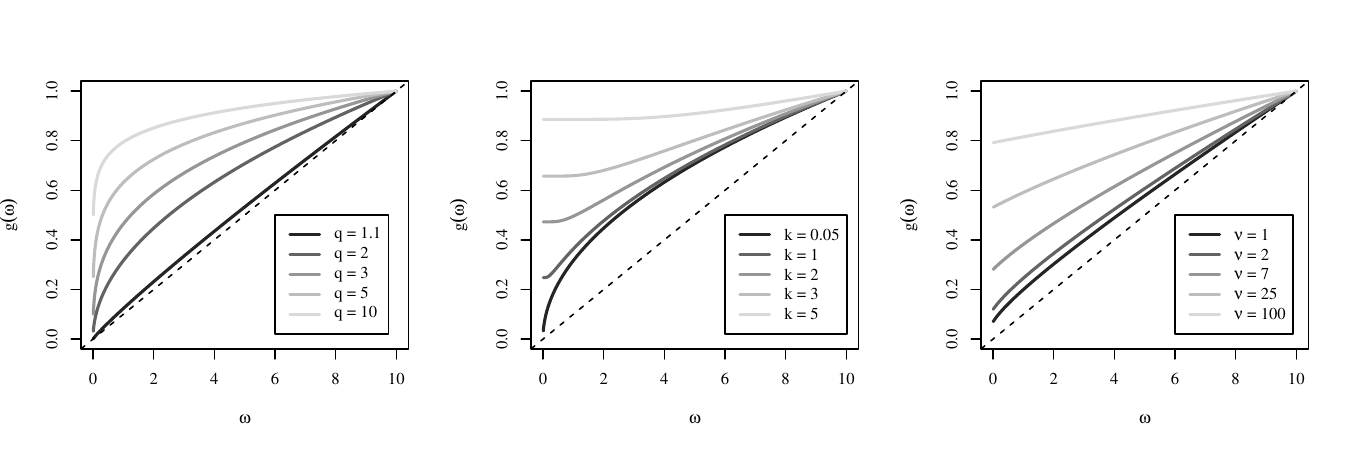}
    \caption{Three functions satisfying \eqref{eq:grm}. From left to right, the functions are $\omega^{1/q}$, $\sqrt{w}/\{\int_{-k}^k \mathrm{exp}(-z^2 / 2\omega) dz \}$, and $g_{1, \nu}(\omega)$, where this last function is defined in Theorem \ref{thm:asymvar}. For the purposes of visualization, the functions have been normalized to attain a maximum value of 1 at $\omega = 10$.}
    \label{fig:exfns}
\end{figure}

While \eqref{eq:grm} is an interesting property, it can only provide a partial characterization of $\mathcal{C}^1_{\bs{\Omega}}$. This is because $\mathrm{c}_{\bl{u}}(\tilde{\bs{\Phi}}, \tilde{\bs{\Phi}}\bs{\Omega}) \leq 0,~\forall \bl{u} \in \mathcal{V}^n$ is not necessary for the right hand side of \eqref{eq:rearr} to hold. On the other hand, the inequality
\begin{equation*}
\begin{aligned}
    \mathrm{c}_{\bl{u}}(\tilde{\bs{\Phi}}, \tilde{\bs{\Phi}}\bs{\Omega}) - \mathrm{e}_{\bl{u}}(\tilde{\bs{\Phi}}) \mathrm{c}_{\bl{u}}(\tilde{\bs{\Phi}}, \bs{\Omega}) &= \mathrm{e}_{\bl{u}}(\tilde{\bs{\Phi}}^2 \bs{\Omega}) - \mathrm{e}_{\bl{u}}(\tilde{\bs{\Phi}})^2 \mathrm{e}_{\bl{u}}(\bs{\Omega}) \\
    &= \sum_{i=1}^n u_i^2 \omega_i (\tilde{\phi}_i - \mathrm{e}_{\bl{u}}(\tilde{\bs{\Phi}}))^2 \\
    &\geq 0
\end{aligned}
\end{equation*}
demonstrates that $\mathrm{c}_{\bl{u}}(\tilde{\bs{\Phi}}, \bs{\Omega}) \leq 0,~\forall \bl{u} \in \mathcal{V}^n$ is necessary to ensure $\tilde{\bs{\Omega}} \in \mathcal{C}^1_{\bs{\Omega}}$. Hence, a necessary and sufficient condition for $\tilde{\bs{\Omega}} \in \mathcal{C}^1_{\bs{\Omega}}$ must include a monotonicity requirement on the diagonal entries of $\tilde{\bs{\Omega}}$ along with a weaker growth restriction than that of \eqref{eq:grm}. The following theorem shows that this relaxed restriction can be expressed in terms of pairs of diagonal entries of $\tilde{\bs{\Omega}}$ and $\bs{\Omega}$.
\begin{theorem}\label{thm:main}
Let $\bs{\Omega} = \mathrm{diag}(\omega_1, \dots, \omega_n) \in \mathcal{D}_+^n$ and let $\tilde{\bs{\Omega}} = \mathrm{diag}(\tilde{\omega}_1, \dots, \tilde{\omega}_n) \in \mathcal{D}_+^n$. Then $\tilde{\bs{\Omega}} \in \mathcal{C}^1_{\bs{\Omega}}$ if and only if
\begin{equation}\label{eq:main}
    1 \leq \frac{\tilde{\omega}_i}{\tilde{\omega}_j} \leq 2\frac{\omega_i}{\omega_j} - 1
\end{equation}
for all $\omega_i \geq \omega_j$, $i \in \{1, \dots, n\}, j \in \{1, \dots, n\}$.
\end{theorem}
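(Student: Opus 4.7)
The plan is to recast the membership $\tilde{\bs{\Omega}} \in \mathcal{C}^1_{\bs{\Omega}}$ as a polynomial inequality on the probability simplex. Writing $p_i = u_i^2$ and $\phi_i = 1/\tilde{\omega}_i$, the inequality $H_{\bl{u}}(\tilde{\bs{\Omega}}, \bs{\Omega}) \leq H_{\bl{u}}(\bl{I}_n, \bs{\Omega})$ for every $\bl{u} \in \mathcal{V}^n$ is equivalent to
$$G(\bl{p}) := \left(\sum_i p_i \phi_i \right)^2 \sum_j p_j \omega_j - \sum_i p_i \phi_i^2 \omega_i \geq 0$$
for every probability vector $\bl{p}$. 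I will prove necessity and sufficiency of the theorem's pairwise condition separately.

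For necessity, restrict $\bl{p}$ to a two-point distribution on $\{i, j\}$ and set $t = p_i$. Then $f(t) := G(\bl{p})$ is a cubic in $t$ vanishing at both $t = 0$ and $t = 1$, so $f(t) = t(1-t)(\alpha + \beta t)$, and non-negativity on $[0,1]$ reduces to $\alpha \geq 0$ and $\alpha + \beta \geq 0$, i.e.\ $f'(0) \geq 0$ and $f'(1) \leq 0$. Writing $r = \tilde{\omega}_i/\tilde{\omega}_j$ and $s = \omega_j/\omega_i$ (WLOG $\omega_i \geq \omega_j$), direct calculation gives $f'(1) = -(r-1)(2 - s(r+1))$; combined with monotonicity $r \geq 1$, the requirement $f'(1) \leq 0$ simplifies to $r \leq 2/s - 1$, which is exactly the theorem's pairwise upper bound. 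The other requirement $f'(0) \geq 0$ then follows automatically via $s + 1/s \geq 2$. The lower bound $r \geq 1$ is itself recovered from the necessity of $\mathrm{c}_{\bl{u}}(\tilde{\bs{\Phi}}, \bs{\Omega}) \leq 0$ noted in the excerpt: placing $\bl{u}$-mass on any pair where monotonicity failed would make this covariance strictly positive.

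For sufficiency, I use $\sum_j p_j = 1$ twice to rewrite $G$ as a symmetric triple sum,
$$G(\bl{p}) = \frac{1}{3} \sum_{i,j,k} p_i p_j p_k\, T(i,j,k), \qquad T(i,j,k) := \sum_{\text{cyc}} \omega_i\bigl(\phi_j \phi_k - \phi_i^2\bigr),$$
and then group by repetition pattern (noting $T(i,i,i) = 0$):
$$G(\bl{p}) = \sum_{i \neq k} p_i^2 p_k\, T(i,i,k) + 2 \sum_{i<j<k} p_i p_j p_k\, T(i,j,k).$$
A direct factorization gives $T(i,i,k) = (\phi_k - \phi_i)\bigl[2\omega_i \phi_i - \omega_k(\phi_i + \phi_k)\bigr]$. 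The theorem's condition on the pair $(i,k)$, combined with the induced monotonicity of $\tilde{\omega}$ along $\omega$, forces the two factors to have the same sign, so $T(i,i,k) \geq 0$. What remains is to show $T(i,j,k) \geq 0$ for every triple.

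This triple inequality is the main obstacle. Fix a triple and sort so that $\omega_i \leq \omega_j \leq \omega_k$ and $\phi_i \geq \phi_j \geq \phi_k$; then $T$ is linear in $(\omega_i, \omega_j, \omega_k)$, and the theorem's three pairwise inequalities cut out a polyhedral cone in $\omega$-space. Normalizing $\omega_k = 1$, the feasible region is a polygon with at most four vertices, and linearity of $T$ reduces the problem to checking $T \geq 0$ at each vertex. At the vertex where the $(j,k)$ and $(i,k)$ conditions are both saturated, clearing denominators yields
$$T \cdot (\phi_i + \phi_k)(\phi_j + \phi_k) = (\phi_j - \phi_k)(\phi_i - \phi_k)(\phi_i \phi_j - \phi_k^2),$$
a product of three non-negative factors. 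At the vertex where the $(i,j)$ and $(i,k)$ conditions are both saturated, a parallel computation yields $(\phi_j - \phi_k)^2 (\phi_i^2 - \phi_j \phi_k)$, again non-negative. The remaining two vertices lie on $\omega_i = 0$ and reduce to one- and two-variable expressions that factor similarly. The difficulty is purely algebraic: the raw polynomial expressions at each vertex are unilluminating, but each collapses to a transparently non-negative product once the correct factorization is spotted.
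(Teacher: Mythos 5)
Your proposal is correct, and while your necessity argument is essentially the paper's, your sufficiency argument takes a genuinely different route. For necessity, both you and the paper restrict to unit vectors supported on a pair and study the resulting cubic in $t=u_i^2$; your organization via the endpoint conditions $f'(0)\geq 0$, $f'(1)\leq 0$ is arguably cleaner than the paper's case analysis on the location of the third root $r_3$, and your identity $f'(1)=-(r-1)\{2-s(r+1)\}$ (up to a positive factor) is exactly what the paper's root analysis encodes. For sufficiency, the paper proves a support-reduction lemma (Lemma \ref{lem:zeros}): a perturbation argument showing that the supremum of $k(\bl{u})$ over the sphere is attained at vectors with at most two nonzero entries, so that only the pairwise cubic has to be checked. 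You instead homogenize and symmetrize the cubic form on the simplex and prove termwise non-negativity of the symmetrized coefficients; I verified the identity $G(\bl{p})=\sum_{i\neq k}p_i^2p_k T(i,i,k)+2\sum_{i<j<k}p_ip_jp_kT(i,j,k)$, the factorization of $T(i,i,k)$, the enumeration of the four extreme rays, and both vertex factorizations $(\phi_j-\phi_k)(\phi_i-\phi_k)(\phi_i\phi_j-\phi_k^2)$ and $(\phi_j-\phi_k)^2(\phi_i^2-\phi_j\phi_k)$, and all are correct. What your route buys is an explicit non-negativity certificate for $G$ (indeed a slightly stronger statement, since each symmetrized coefficient is individually non-negative under \eqref{eq:main}), avoiding the delicate null-space perturbation and the separate $\|\bl{u}\|_0=3,4$ cases of Lemma \ref{lem:zeros}; what the paper's route buys is a direct explanation of why the characterization is pairwise, with no triple-wise case analysis. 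A full write-up of your version should spell out three steps that are currently asserted: (i) the sign of the second factor of $T(i,i,k)$ when the repeated index carries the smaller $\omega$ is not just monotonicity--it needs the same $s+1/s\geq 2$ (equivalently $(r+1)^2\geq 4r$) computation you invoke for $f'(0)\geq 0$; (ii) the candidate vertex where the $(i,j)$ and $(j,k)$ constraints are simultaneously tight must be shown infeasible, which follows from $(\phi_i/\phi_j-1)(\phi_j/\phi_k-1)\geq 0$ and is what caps the vertex count at four; and (iii) at the vertex on $\omega_i=0$ with the $(j,k)$ constraint tight, the expression does not factor as a product but is linear and increasing in $\phi_i$, hence minimized at $\phi_i=\phi_j$, where it reduces to $(\phi_j-\phi_k)(\phi_j^2+\phi_k^2)\geq 0$. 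These are routine and do not affect the validity of the approach.
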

The monotonicity requirement on the diagonal elements of $\tilde{\bs{\Omega}}$ is expressed through the first inequality in \eqref{eq:main}, while the weak growth restriction is expressed through the second.

{As it is stated, Theorem \ref{thm:main} only refers to the variance of a single regression coefficient. The next results show that \eqref{eq:main} is not unique to the univariate case, nor is it merely a peculiarity of the generalized variance criterion used to define $\mathcal{C}_{\bs{\Omega}}^p$.
\begin{corollary}\label{cor:main1}
Let $1 \leq p < n$. Then $\tilde{\bs{\Omega}} \in \mathcal{C}^p_{\bs{\Omega}}$ if and only if \eqref{eq:main} holds for all $\omega_i \geq \omega_j$, $i \in \{1, \dots, n\}, j \in \{1, \dots, n\}$.
\end{corollary}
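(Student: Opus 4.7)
The first step is to reduce to the case where $\bl{X}$ has orthonormal columns. Since $H_{\bl{X}\bl{R}}(\tilde{\bs{\Omega}}, \bs{\Omega}) = \bl{R}^{-1} H_{\bl{X}}(\tilde{\bs{\Omega}}, \bs{\Omega}) \bl{R}^{-\top}$ for any invertible $\bl{R} \in \mathbb{R}^{p \times p}$, the ratio $|H_{\bl{X}}(\tilde{\bs{\Omega}}, \bs{\Omega})| / |H_{\bl{X}}(\bl{I}_n, \bs{\Omega})|$ depends on $\bl{X}$ only through its column space. I may therefore replace $\bl{X}$ with the matrix $\bl{U}$ of its left singular vectors, satisfying $\bl{U}^\top \bl{U} = \bl{I}_p$.

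For necessity, I would reduce to Theorem \ref{thm:main}. Fix indices $i, j$ with $\omega_i \ge \omega_j$. Using $p < n$, pick $p-1$ further indices $k_1, \ldots, k_{p-1}$ in $\{1, \ldots, n\} \setminus \{i, j\}$ and form $\bl{X}$ whose first column is $x_i \bl{e}_i + x_j \bl{e}_j$ and whose remaining columns are the canonical basis vectors $\bl{e}_{k_1}, \ldots, \bl{e}_{k_{p-1}}$. Both $\bl{X}^\top \tilde{\bs{\Omega}}^{-1} \bl{X}$ and $\bl{X}^\top \tilde{\bs{\Omega}}^{-1} \bs{\Omega} \tilde{\bs{\Omega}}^{-1} \bl{X}$ become block diagonal, and the contributions of the $\bl{e}_{k_l}$ columns contribute a common factor $\prod_l \omega_{k_l}$ that cancels in the determinant ratio. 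What remains is exactly the $p=1$ inequality $H_{\bl{u}}(\tilde{\bs{\Omega}}, \bs{\Omega}) \le H_{\bl{u}}(\bl{I}_n, \bs{\Omega})$ for the two-sparse unit vector proportional to $x_i \bl{e}_i + x_j \bl{e}_j$. Letting $(x_i, x_j)$ range over $\mathbb{R}^2$ and invoking Theorem \ref{thm:main} restricted to the support $\{i, j\}$ gives \eqref{eq:main} for the pair $(i, j)$; varying $(i, j)$ gives it in full.

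For sufficiency, I would apply the Cauchy-Binet formula to each of the three determinants appearing in $H_{\bl{U}}$. Writing $w_S := \det(\bl{U}_S)^2$ for $S \in \binom{[n]}{p}$ and using $\sum_S w_S = 1$, the inequality $|H_{\bl{U}}(\tilde{\bs{\Omega}}, \bs{\Omega})| \le |H_{\bl{U}}(\bl{I}_n, \bs{\Omega})|$ becomes
\[
\sum_{S} w_S \prod_{i \in S} \tilde{\phi}_i^2 \omega_i \;\le\; \Bigl( \sum_{S} w_S \prod_{i \in S} \tilde{\phi}_i \Bigr)^{\!2} \sum_{S} w_S \prod_{i \in S} \omega_i,
\]
which has the same structural form as the $p=1$ inequality of Theorem \ref{thm:main}, but over the ``super-indices'' $S \in \binom{[n]}{p}$ with super-values $\tilde{\omega}_S := \prod_{i \in S} \tilde{\omega}_i$ and $\omega_S := \prod_{i \in S} \omega_i$.

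The hard part is that a black-box application of Theorem \ref{thm:main} at the super-index level would demand the \emph{product} condition $\tilde{\omega}_S / \tilde{\omega}_T \le 2\omega_S / \omega_T - 1$, which is strictly stronger than \eqref{eq:main} and does not follow from it (e.g.\ with $n=4$, $\omega = (1,1,4,4)$, $\tilde{\omega} = (1,1,7,7)$, condition \eqref{eq:main} holds pairwise but fails on $S=\{3,4\}, T=\{1,2\}$). What saves the argument is that the $w_S$ are Plücker coordinates of $\mathrm{col}(\bl{U})$, not arbitrary probabilities, and the Plücker identities prevent them from concentrating on subsets that differ in more than one index with all intermediate ``swap-neighbors'' kept at weight zero. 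I would organise the proof as a pair-swap reduction: group subsets with $|S \triangle T| = 2$, say $T = (S \setminus \{i\}) \cup \{j\}$, factor out the common $\prod_{k \in S \cap T} \tilde{\phi}_k$ and $\prod_{k \in S \cap T} \omega_k$ so that only the $(i, j)$ factors remain, and appeal to the pairwise \eqref{eq:main} on $(i, j)$ exactly as in Theorem \ref{thm:main}. The Plücker relations on the $w_S$ let the cross-pair remainder telescope, producing a manifestly non-negative decomposition of $|H_{\bl{U}}(\bl{I}_n, \bs{\Omega})| - |H_{\bl{U}}(\tilde{\bs{\Omega}}, \bs{\Omega})|$.
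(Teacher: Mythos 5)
Your necessity argument is sound and is essentially the paper's: pick the offending pair $(i,j)$, take the two-sparse vector from Theorem \ref{thm:main} as the first column and pad with standard basis vectors supported off $\{i,j\}$, so that the determinant ratio factors and reduces to the univariate violation. No issue there.

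The sufficiency direction, however, has a genuine gap, and it sits exactly at the point you flag yourself. After the Cauchy--Binet expansion you correctly observe that the pairwise condition \eqref{eq:main} does \emph{not} imply the analogous condition on the super-indices $S$ (your example $\omega=(1,1,4,4)$, $\tilde{\omega}=(1,1,7,7)$ is right: with weights $w_{\{3,4\}}\approx 0.98$, $w_{\{1,2\}}\approx 0.02$ the super-index inequality fails, and such weights are ruled out only because the Plücker relation $p_{12}p_{34}=p_{13}p_{24}-p_{14}p_{23}$ forbids mass on $\{1,2\}$ and $\{3,4\}$ with all swap-neighbors zero). But the claimed rescue --- a ``pair-swap reduction'' in which the Plücker identities make the cross-pair remainder ``telescope'' into a manifestly non-negative decomposition of $|H_{\bl{U}}(\bl{I}_n,\bs{\Omega})|-|H_{\bl{U}}(\tilde{\bs{\Omega}},\bs{\Omega})|$ --- is asserted, not constructed. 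No decomposition is written down, no lemma about how the quadratic Plücker relations interact with the cubic-in-$w$ inequality is stated, and it is far from evident that grouping subsets with $|S\triangle T|=2$ and invoking \eqref{eq:main} pairwise controls the terms coming from subsets differing in two or more indices. Since this is precisely the hard part of the theorem, the proposal as it stands does not prove sufficiency. For comparison, the paper avoids the combinatorial route entirely: it reparameterizes $\bl{Z}=\bs{\Omega}^{1/2}\bl{U}(\bl{U}^\top\bs{\Omega}\bl{U})^{-1/2}$, rotates by the eigenvectors of $\bl{Z}^\top\tilde{\bs{\Phi}}\bs{\Omega}^{-1}\bl{Z}$ so the denominator determinant becomes a product of diagonal entries, applies Hadamard's inequality to the two numerator determinants, and thereby bounds the ratio $|H_{\bl{U}}(\tilde{\bs{\Omega}},\bs{\Omega})|/|H_{\bl{U}}(\bl{I}_n,\bs{\Omega})|$ by a product of $p$ univariate ratios $H_{\bl{w}_j}(\tilde{\bs{\Omega}},\bs{\Omega})/H_{\bl{w}_j}(\bl{I}_n,\bs{\Omega})$, each $\leq 1$ by Theorem \ref{thm:main}. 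If you want to salvage your route, you would need to actually exhibit the non-negative decomposition you describe (or otherwise exploit realizability of the $w_S$ as squared Plücker coordinates), which is a substantial piece of work that the proposal leaves undone.
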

\begin{corollary}\label{cor:main2}
    Let $1 \leq p < n$. Define the set
    \begin{equation*}
        \mathcal{K}^p_{\bs{\Omega}} = \{ \tilde{\bs{\Omega}} \in \mathcal{D}_+^n : \tr [H_{\bl{X}}(\tilde{\bs{\Omega}}, \bs{\Omega})] \leq \tr [H_{\bl{X}}(\bl{I}_n, \bs{\Omega}) ],~\forall \bl{X} \in \mathbb{R}^{n \times p} \}.
    \end{equation*}
    Then $\tilde{\bs{\Omega}} \in \mathcal{K}_{\bs{\Omega}}^p \iff \tilde{\bs{\Omega}} \in \mathcal{C}_{\bs{\Omega}}^p$.
\end{corollary}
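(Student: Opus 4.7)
The plan is to show that $\mathcal{K}^p_{\bs{\Omega}}$ admits the same pairwise characterization \eqref{eq:main} that Corollary \ref{cor:main1} gives for $\mathcal{C}^p_{\bs{\Omega}}$. The equivalence then follows immediately. The proof splits naturally into a forward and reverse direction, each reducing the multivariate trace comparison to the scalar setting of Theorem \ref{thm:main}.

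For the direction $\tilde{\bs{\Omega}} \in \mathcal{K}^p_{\bs{\Omega}} \Rightarrow \eqref{eq:main}$, I would choose $\bl{X}$ so that the trace inequality collapses to a scalar statement about a specific pair of indices. Fix any $i, j$ with $\omega_i \geq \omega_j$ and any unit vector $\bl{u}$ supported on $\{i, j\}$, and set
\begin{equation*}
\bl{X} = [\bl{u},\, \bl{e}_{k_2},\, \ldots,\, \bl{e}_{k_p}],
\end{equation*}
with $k_2, \ldots, k_p$ distinct indices in $\{1, \ldots, n\} \setminus \{i, j\}$. This construction is feasible exactly because $p - 1 \leq n - 2$ under the hypothesis $p < n$, and it is where that hypothesis is essential. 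The disjoint supports among the columns make $\bl{X}^\top \tilde{\bs{\Omega}}^{-1} \bl{X}$, $\bl{X}^\top \bl{X}$, and the associated matrices $H_{\bl{X}}$ diagonal; direct computation shows that the $p-1$ diagonal contributions from the standard basis columns equal $\omega_{k_l}$ on both sides of the trace inequality and therefore cancel, leaving $H_{\bl{u}}(\tilde{\bs{\Omega}}, \bs{\Omega}) \leq H_{\bl{u}}(\bl{I}_n, \bs{\Omega})$ for every unit vector $\bl{u}$ supported on $\{i, j\}$. The two-coordinate case of Theorem \ref{thm:main} then yields \eqref{eq:main} for the pair $(i, j)$.

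For the reverse direction, I would begin with the observation that the trace inequality holding at every $\bl{X} \in \mathbb{R}^{n \times p}$ is equivalent to the Loewner inequality $H_{\bl{X}}(\tilde{\bs{\Omega}}, \bs{\Omega}) \preceq H_{\bl{X}}(\bl{I}_n, \bs{\Omega})$ holding at every $\bl{X}$. Indeed, replacing $\bl{X}$ by $\bl{X}\bl{A}$ for an invertible $\bl{A}$ yields $H_{\bl{X}\bl{A}} = \bl{A}^{-1} H_{\bl{X}} \bl{A}^{-\top}$, so imposing the trace inequality at every $\bl{X}\bl{A}$ amounts to $\tr[\bl{M} H_{\bl{X}}(\tilde{\bs{\Omega}}, \bs{\Omega})] \leq \tr[\bl{M} H_{\bl{X}}(\bl{I}_n, \bs{\Omega})]$ for every positive-definite $\bl{M} = \bl{A}^{-\top}\bl{A}^{-1}$, which is the standard dual characterization of the Loewner ordering. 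It therefore suffices to show that \eqref{eq:main}---equivalently, the scalar inequality $H_{\bl{u}}(\tilde{\bs{\Omega}}, \bs{\Omega}) \leq H_{\bl{u}}(\bl{I}_n, \bs{\Omega})$ at every $\bl{u} \in \mathcal{V}^n$ via Theorem \ref{thm:main}---implies the Loewner inequality at every $\bl{X}$. The scalar-to-multivariate passage here is analogous to the one that lifts Theorem \ref{thm:main} to the determinant criterion of Corollary \ref{cor:main1}, now carried out at the level of the Loewner order.

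The main obstacle is this final step. The scalar inequality at $\bl{u}$ compares two unbiased linear estimators of $\bl{u}^\top \bs{\beta}$ whose contrast vectors are parallel, whereas the Loewner inequality at $(\bl{X}, \bl{v})$ compares two unbiased linear estimators of $\bl{v}^\top \bs{\beta}$ whose contrast vectors $\bl{c}$ and $\bl{c}'$ both satisfy $\bl{X}^\top \bl{c} = \bl{X}^\top \bl{c}' = \bl{v}$ but are otherwise distinct, so the scalar result does not apply pointwise. A natural workaround is to decompose each contrast as the true-WLS contrast $\bs{\Omega}^{-1} \bl{X} (\bl{X}^\top \bs{\Omega}^{-1} \bl{X})^{-1} \bl{v}$ plus a perturbation lying in $\ker(\bl{X}^\top)$; the Gauss-Markov identity then collapses the comparison of variances to a comparison of $\bs{\Omega}$-norms of perturbations in $\ker(\bl{X}^\top)$, which one can bound by applying the scalar inequality to an appropriately chosen $\bl{u}$ in that subspace.
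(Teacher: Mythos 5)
Your first direction is fine and is essentially the paper's own converse argument: the block design $[\bl{u},\,\bl{e}_{k_2},\dots,\bl{e}_{k_p}]$ makes both $H$ matrices diagonal, the basis-vector columns contribute $\omega_{k_l}$ to both traces, and the two-index case of Theorem \ref{thm:main} finishes.

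The reverse direction, however, is a dead end as you have set it up. Your dual-characterization step is algebraically correct: since $H_{\bl{X}\bl{A}}=\bl{A}^{-1}H_{\bl{X}}\bl{A}^{-\top}$, imposing the trace inequality at every $\bl{X}\in\mathbb{R}^{n\times p}$ is the same as imposing $H_{\bl{X}}(\tilde{\bs{\Omega}},\bs{\Omega})\preceq H_{\bl{X}}(\bl{I}_n,\bs{\Omega})$ at every $\bl{X}$. But that Loewner dominance is not a consequence of \eqref{eq:main}; it is false. Take $n=3$, $p=2$, $\bs{\Omega}=\mathrm{diag}(1,4,100)$ and $\tilde{\bs{\Omega}}=\bs{\Omega}^{1/2}=\mathrm{diag}(1,2,10)$, which satisfies \eqref{eq:grm} and hence \eqref{eq:main}. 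For the design $\bl{X}$ with rows $(1,0)$, $(0,1)$, $(10,10)$ one computes $H_{\bl{X}}(\bl{I}_3,\bs{\Omega})-H_{\bl{X}}(\tilde{\bs{\Omega}},\bs{\Omega})\approx\left(\begin{smallmatrix}0.511 & -0.316\\ -0.316 & 0.169\end{smallmatrix}\right)$, which has negative determinant and is therefore indefinite; by your own equivalence there is then an invertible $\bl{A}$ (equivalently a positive definite $\bl{M}=\bl{A}^{-\top}\bl{A}^{-1}$) such that the trace inequality itself fails at the design $\bl{X}\bl{A}$. So no decomposition of contrast vectors into the WLS contrast plus a $\ker(\bl{X}^\top)$ perturbation can rescue the final step: the statement you reduced to is simply not true. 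What your reduction really shows is that the quantifier in $\mathcal{K}^p_{\bs{\Omega}}$ cannot be meant literally over all of $\mathbb{R}^{n\times p}$; unlike the determinant ratio defining $\mathcal{C}^p_{\bs{\Omega}}$, the trace is not invariant under $\bl{X}\mapsto\bl{X}\bl{A}$, and the paper's sufficiency proof only ever establishes the comparison for orthonormal designs $\bl{U}\in\mathcal{V}^{n,p}$.

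The missing idea, and the way the paper completes this direction, is a columnwise decomposition of the trace rather than any Loewner statement. Given $\bl{U}\in\mathcal{V}^{n,p}$, let $\bl{Q}$ be the orthogonal matrix of eigenvectors of $\bl{U}^\top\tilde{\bs{\Phi}}\bl{U}$ and set $\bl{Z}=\bl{U}\bl{Q}$, so that $\bl{Z}^\top\tilde{\bs{\Phi}}\bl{Z}$ is diagonal. By cyclicity of the trace,
\begin{equation*}
\tr[H_{\bl{U}}(\tilde{\bs{\Omega}},\bs{\Omega})]-\tr[H_{\bl{U}}(\bl{I}_n,\bs{\Omega})]=\sum_{j=1}^p\left\{\frac{\bl{z}_j^\top\tilde{\bs{\Phi}}\bs{\Omega}\tilde{\bs{\Phi}}\bl{z}_j}{(\bl{z}_j^\top\tilde{\bs{\Phi}}\bl{z}_j)^2}-\bl{z}_j^\top\bs{\Omega}\bl{z}_j\right\},
\end{equation*}
and each summand is exactly the univariate quantity $H_{\bl{z}_j}(\tilde{\bs{\Omega}},\bs{\Omega})-H_{\bl{z}_j}(\bl{I}_n,\bs{\Omega})$, which is nonpositive under \eqref{eq:main} by Theorem \ref{thm:main}. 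That single diagonalization step replaces the scalar-to-Loewner passage you were attempting and is the whole content of the sufficiency direction.
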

Corollary \ref{cor:main1} says that the subscedastic set induced by a given $\bs{\Omega}$ does not depend on the number of regressors, provided that number is less than $n$. Corollary \ref{cor:main2} shows that the total variance of a FLS estimate using subscedastic weights is less than that of the OLS estimate.} {While it is not the focus of this article, the next result shows that the case of non-diagonal error covariance can also be partially addressed by \eqref{eq:main}.
\begin{corollary}\label{cor:main3}
    Let $\bs{\Omega}, \tilde{\bs{\Omega}} \in \mathcal{S}_+^n$ be simultaneously diagonalizable, and let $\{\lambda_1, \dots, \lambda_n\}$ and $\{\tilde{\lambda}_1, \dots, \tilde{\lambda}_n\}$ be the eigenvalues of $\bs{\Omega}$ and $\tilde{\bs{\Omega}}$, respectively. Then $\tilde{\bs{\Omega}} \in \mathcal{C}^p_{\bs{\Omega}}$ if and only if
    \begin{equation}
        1 \leq \frac{\tilde{\lambda}_i}{\tilde{\lambda}_j} \leq 2\frac{\lambda_i}{\lambda_j} - 1
    \end{equation}
    for all $\lambda_i \geq \lambda_j$, $i \in \{1, \dots, n\}, j \in \{1, \dots, n\}$. 
\end{corollary}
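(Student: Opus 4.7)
The plan is to reduce the non-diagonal case to the diagonal case already handled by Corollary \ref{cor:main1} via a change of variables. Since $\bs{\Omega}$ and $\tilde{\bs{\Omega}}$ are simultaneously diagonalizable, there exists an orthogonal matrix $\bl{Q}$ such that $\bs{\Omega} = \bl{Q}\bs{\Lambda}\bl{Q}^\top$ and $\tilde{\bs{\Omega}} = \bl{Q}\tilde{\bs{\Lambda}}\bl{Q}^\top$, where $\bs{\Lambda} = \mathrm{diag}(\lambda_1, \dots, \lambda_n)$ and $\tilde{\bs{\Lambda}} = \mathrm{diag}(\tilde{\lambda}_1, \dots, \tilde{\lambda}_n)$. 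Plugging these decompositions into the definition of $H_{\bl{X}}$ in \eqref{eq:hform} and writing $\tilde{\bl{X}} := \bl{Q}^\top \bl{X}$, I would verify that
\begin{equation*}
H_{\bl{X}}(\tilde{\bs{\Omega}}, \bs{\Omega}) = H_{\tilde{\bl{X}}}(\tilde{\bs{\Lambda}}, \bs{\Lambda}), \qquad H_{\bl{X}}(\bl{I}_n, \bs{\Omega}) = H_{\tilde{\bl{X}}}(\bl{I}_n, \bs{\Lambda}),
\end{equation*}
because the inner $\bl{Q}$ and $\bl{Q}^\top$ factors collapse against the eigenvectors of $\bs{\Omega}$ and $\tilde{\bs{\Omega}}$, and the identity matrix is invariant under orthogonal conjugation.

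Next, since $\bl{Q}$ is a bijection of $\mathbb{R}^{n \times p}$ onto itself, the family of matrices $\{\tilde{\bl{X}} : \bl{X} \in \mathbb{R}^{n \times p}\}$ coincides with $\mathbb{R}^{n \times p}$. Hence the statement ``$|H_{\bl{X}}(\tilde{\bs{\Omega}}, \bs{\Omega})| \leq |H_{\bl{X}}(\bl{I}_n, \bs{\Omega})|$ for all $\bl{X} \in \mathbb{R}^{n \times p}$'' is logically equivalent to ``$|H_{\tilde{\bl{X}}}(\tilde{\bs{\Lambda}}, \bs{\Lambda})| \leq |H_{\tilde{\bl{X}}}(\bl{I}_n, \bs{\Lambda})|$ for all $\tilde{\bl{X}} \in \mathbb{R}^{n \times p}$.'' That is, $\tilde{\bs{\Omega}} \in \mathcal{C}^p_{\bs{\Omega}}$ if and only if $\tilde{\bs{\Lambda}} \in \mathcal{C}^p_{\bs{\Lambda}}$, where the latter subscedastic set is defined using the diagonal matrices $\bs{\Lambda}$ and $\tilde{\bs{\Lambda}}$.

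Finally, I would invoke Corollary \ref{cor:main1} applied to the diagonal pair $(\tilde{\bs{\Lambda}}, \bs{\Lambda})$: the inclusion $\tilde{\bs{\Lambda}} \in \mathcal{C}^p_{\bs{\Lambda}}$ is equivalent to the pairwise inequality
\begin{equation*}
    1 \leq \frac{\tilde{\lambda}_i}{\tilde{\lambda}_j} \leq 2\frac{\lambda_i}{\lambda_j} - 1
\end{equation*}
holding whenever $\lambda_i \geq \lambda_j$. Chaining the two equivalences yields the claim.

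The only delicate point is verifying the two identities $H_{\bl{X}}(\tilde{\bs{\Omega}}, \bs{\Omega}) = H_{\tilde{\bl{X}}}(\tilde{\bs{\Lambda}}, \bs{\Lambda})$ and its OLS analogue, which require that the same orthogonal matrix $\bl{Q}$ simultaneously diagonalizes both $\tilde{\bs{\Omega}}$ and $\bs{\Omega}$ (so that the cross terms $\tilde{\bs{\Omega}}^{-1} \bs{\Omega} \tilde{\bs{\Omega}}^{-1}$ likewise diagonalizes to $\tilde{\bs{\Lambda}}^{-1} \bs{\Lambda} \tilde{\bs{\Lambda}}^{-1}$). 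This is precisely why the simultaneous diagonalizability hypothesis is essential: without it, one cannot reduce the problem to a single change of basis, and the non-commuting cross term would prevent a clean reduction to the diagonal subscedastic characterization.
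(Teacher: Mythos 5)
Your proposal is correct and follows essentially the same route as the paper: both absorb the common eigenvector matrix into the design matrix (the paper notes $\bl{V}^\top \bl{U}$ ranges over $\mathcal{V}^{n,p}$, you note $\bl{X} \mapsto \bl{Q}^\top \bl{X}$ is a bijection of full-rank design matrices), reduce $H_{\bl{X}}(\tilde{\bs{\Omega}}, \bs{\Omega})$ and $H_{\bl{X}}(\bl{I}_n, \bs{\Omega})$ to their diagonal counterparts $H_{\tilde{\bl{X}}}(\tilde{\bs{\Lambda}}, \bs{\Lambda})$ and $H_{\tilde{\bl{X}}}(\bl{I}_n, \bs{\Lambda})$, and then invoke Corollary \ref{cor:main1}. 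No gaps to report.
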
}

We conclude this section by stating some additional properties of subscedastic weights that can be derived from \eqref{eq:main}. First, Corollary \ref{cor:main1} implies that the conclusion of Proposition \ref{prop:mono} also holds for $\mathcal{C}_{\bs{\Omega}}^p$ when $p>1$. This can be seen by the following reasoning: if the diagonal elements of $\tilde{\bs{\Omega}}$ satisfy \eqref{eq:grm}, then $\tilde{\omega}_i / \tilde{\omega}_j \geq 1$ for all $\omega_i \geq \omega_j$, and $\tilde{\omega}_i \omega_j / (\tilde{\omega}_j \omega_i) \leq 1 \leq 2-(\omega_j / \omega_i)$ for all $\omega_i \geq \omega_j$. Multiplying the latter inequalities by $\omega_i /\ \omega_j$ shows that \eqref{eq:grm} implies \eqref{eq:main}. Next, because \eqref{eq:main} depends only on pairwise ratios of diagonal elements, $\mathcal{C}^p_{\bs{\Omega}}$ has what \cite{bilodeau_choice_1990} and \cite{kariya_generalized_2004} call the ``symmetric inverse property," meaning
\begin{equation*}
	\tilde{\bs{\Omega}} \in \mathcal{C}^p_{\bs{\Omega}} \iff \tilde{\bs{\Omega}}^{-1} \in \mathcal{C}^p_{\bs{\Omega}^{-1}}.
\end{equation*}
Finally, $\mathcal{C}_{\bs{\Omega}}^p$ is a convex cone on $\mathcal{D}_+^n$. The cone property of $\mathcal{C}^p_{\bs{\Omega}}$ is clear from the fact that $H_{\bl{X}}(s\tilde{\bs{\Omega}}, \bs{\Omega}) = H_{\bl{X}}(\tilde{\bs{\Omega}}, \bs{\Omega}),~\forall s>0$. The convexity of $\mathcal{C}^p_{\bs{\Omega}}$ can be derived directly from \eqref{eq:main}: given $\tilde{\bs{\Omega}}, \tilde{\bs{\Psi}} \in \mathcal{C}^p_{\bs{\Omega}}$,
\begin{equation*}
    \tilde{\omega}_j \leq \tilde{\omega}_i \leq 2\frac{\omega_i}{\omega_j} \tilde{\omega}_j - \tilde{\omega}_j,~\text{ and }~\tilde{\psi}_j \leq \tilde{\psi}_i \leq 2\frac{\omega_i}{\omega_j} \tilde{\psi}_j - \tilde{\psi}_j,
\end{equation*}
for all $\omega_i \geq \omega_j$. For any $0 \leq t \leq 1$, this implies
\begin{equation*}
    t \tilde{\omega}_j + (1-t)\tilde{\psi}_j \leq t \tilde{\omega}_i + (1-t)\tilde{\psi}_i \leq 2\frac{\omega_i}{\omega_j} \{t \tilde{\omega}_j + (1-t)\tilde{\psi}_j\} - \{t \tilde{\omega}_j + (1-t)\tilde{\psi}_j\},
\end{equation*}
for all $\omega_i \geq \omega_j$, so $t \tilde{\bs{\Omega}} + (1-t) \tilde{\bs{\Psi}} \in \mathcal{C}^p_{\bs{\Omega}}$. Along with the cone property, convexity implies that 
\begin{equation}\label{eq:reg}
\tilde{\bs{\Omega}} \in \mathcal{C}^p_{\bs{\Omega}} \implies \tilde{\bs{\Omega}} + s\bl{I}_n \in \mathcal{C}^p_{\bs{\Omega}},~\forall s>0,
\end{equation}
so regularized subscedastic weights are also subscedastic weights.

\section{Estimation in the linear model}\label{sec:impl}

The matrix $H_{\bl{X}}(\tilde{\bs{\Omega}}, \bs{\Omega})$ used to define the notion of a subscedastic set is equal to the covariance matrix of the FLS estimate $b_{\bl{X}}(\tilde{\bs{\Omega}})$ under \eqref{eq:smod} when $\tilde{\bs{\Omega}}$ is any fixed matrix in $\mathcal{D}_n^+$. As seen in the previous section, the conditions so that $\tilde{\bs{\Omega}}$ is a member of $\mathcal{C}^p_{\bs{\Omega}}$ depend on the unknown $\bs{\Omega}$. A natural question is then: to what extent is $b_{\bl{X}}(\tilde{\bs{\Omega}})$ actually feasible? More broadly, what is the relevance of Section \ref{sec:properties} to estimation in practice if one must know $\bs{\Omega}$ to choose an appropriate $\tilde{\bs{\Omega}}$?

To begin, we observe that one does not need to know the values of the diagonal elements of $\bs{\Omega}$. One implication of Corollary \ref{cor:main1} is that knowing the ranks of $(\omega_1, \dots, \omega_n)$ along with a lower bound on the minimum ratio between consecutive ordered elements $\gamma < \min_{i = j+1}\{\omega_{(i)}/\omega_{(j)}\}$ would be sufficient to construct an $\tilde{\bs{\Omega}}$ and a corresponding $b_{\bl{X}}(\tilde{\bs{\Omega}})$ that is guaranteed to outperform the OLS estimate. By contrast, to reproduce $\bs{\Omega}$ itself up to a scale factor, it would be necessary to know the ranks of the diagonal elements of $\bs{\Omega}$ along with $n-1$ additional ratios, for instance the collection of all ratios between consecutive ordered elements. Thus, the task of finding an optimal estimate (the WLS estimate) depends on more unknowns than the task of finding an estimate that is at least better than the OLS estimate.

This latter, more modest goal, brings otherwise impossible tasks into the feasible realm in certain simple cases. For instance, consider the groupwise heteroscedastic linear model with error covariance matrix
\begin{equation*}
    \bs{\Omega} = \bs{\Omega}(\bs{\theta}) = \left[ \begin{array}{ccc}
         \theta_1^2 \bl{I}_{n_1} & & \bl 0 \\
         & \ddots & \\
         \bl 0 & & \theta_K^2 \bl{I}_{n_K}
    \end{array}\right],
\end{equation*}
where $\bs{\theta} \in \mathbb{R}_+^K$, $n = \sum_{k=1}^K n_k$. While it is implausible that one knows the exact values of $\bs{\theta}$ in advance, it is at least more plausible that, for small $K$, one knows the ordering of the elements of $\bs{\theta}$ and that no group's error variance is within some factor $\gamma \geq 1$ of another's. Let $\tilde{\theta}_{(1)}^2 = 1$, and set $\tilde{\theta}_{(i)}^2 := (2\gamma - 1)\tilde{\theta}_{(i-1)}^2$ for each $i > 1$. Then, using the notation above, $\tilde{\bs{\Omega}} = \bs{\Omega}(\tilde{\bs{\theta}})$ defines a matrix whose diagonal elements are subscedastic weights.

Alternatively, consider the linear model with error variances depending on a single covariate through a parameterized scedastic function
\begin{equation*}
    \omega_i = v_{\theta}(x_{i,1}),~i=1, \dots, n.
\end{equation*}
Common examples of $v_{\theta}$, all of which are used in the simulation studies of \cite{romano_resurrecting_2017}, include
\begin{equation}\label{eq:scedasticfns}
\begin{aligned}
    v_{{\theta}}(x) &= |x|^{\theta} \\
    v_{{\theta}}(x) &= \{\log|x|\}^{\theta} \\
    v_{{\theta}}(x) &= e^{\theta|x| + \theta|x|^2}.
\end{aligned}
\end{equation}
Conveniently, in each specification above, the ranks of $(\omega_1, \dots, \omega_n)$ are equivalent to the ranks of $(|x_{1,1}|, \dots, |x_{n,1}|)$, which are known. Hence, if one can identify a lower bound $\gamma$ for the minimum plausible value of $\theta$, one can simply take $\tilde{\omega}_i = v_{\gamma}(x_{i,1})$, and the corresponding $b_{\bl{X}}(\tilde{\bs{\Omega}})$ will outperform the OLS estimate. This is due to the fact that, for each of the scedastic functions above, $\gamma \leq \theta$ implies that $v_{\gamma}(x)$ is a fractional power of $v_{\theta}(x)$ and thus satisfies \eqref{eq:grm}.

{Finally, there are simple yet common examples of linear models with non-diagonal covariance for which the subscedastic property can serve as a guide for the design of feasible weights. Suppose that data are recorded on biological specimens, which are processed in $K$ distinct batches. Sources of idiosyncratic variation due to the processing of the specimens may introduce marginal correlation among observations within batches. This may be modeled as follows
\begin{equation}\label{eq:mixmod}
\begin{aligned}
    \bl{y} &= \bl{X}\bs{\beta} + \bl{A}\bl{z} + \bs{\varepsilon}, \\
    \Exp{\bl{z}} &=  \bl{0}, \Cov{\bl{z}} = \theta_1^2 \bl{I}_K \\
    \Exp{\bs{\varepsilon}} &=  \bl{0}, \Cov{\bs{\varepsilon}} = \theta_0^2 \bl{I}_n,
\end{aligned}
\end{equation}
where $\bl{A}$ is an $n \times K$ indicator matrix such that $A_{ik} = 1$ if observation $i$ is in batch $k$, and $A_{ik} = 0$ otherwise. Model \eqref{eq:mixmod} specifies a linear mixed effects model with random intercepts, which induces a non-diagonal marginal error covariance matrix
\begin{equation*}
    \bs{\Omega}(\bs{\theta}) = \theta_1^2 \bl{A}\bl{A}^\top + \theta_0^2 \bl{I}_n.
\end{equation*}
Conveniently, both the eigenvectors of this matrix and the ordering of its eigenvalues are known as long as $\bl{A}$ is known. Assume that the number of observations in batch $k$ is $n_k$, and that $n_1 \geq \dots \geq n_K$. Then the eigenvectors and eigenvalues of $\bs{\Omega}(\bs{\theta})$ are, respectively,
\begin{equation*}
\begin{aligned}
    \bl{U} &= \begin{bmatrix}\bl{A} &\bl{Q}\end{bmatrix}\mathrm{diag}(1/\sqrt{n_1}, \dots, 1/\sqrt{n_K}, 1, \dots, 1), \\
    \bs{\Lambda} &= \mathrm{diag}(n_1 \theta_1^2 + \theta_0^2, \dots, n_K \theta_1^2 + \theta_0^2, \theta_0^2, \dots, \theta_0^2),
\end{aligned}
\end{equation*}
where $\bl{Q} \in \mathcal{V}^{n, n-K}$ is a matrix whose columns form an orthonormal basis for the null space of $\bl{A}^\top$. Here, subscedastic weights may be designed if one has knowledge of, or a conservative upper bound on $\theta_0^2/\theta_1^2$, the ratio of the isotropic variability to the batch variability. Specifically, if $\gamma \geq \theta_0^2/\theta_1^2$, then it can be checked that
\begin{equation*}
\begin{aligned}
    \tilde{\bs{\Omega}} &= \bl{U} \tilde{\bs{\Lambda}} \bl{U}^\top, \\
    \tilde{\bs{\Lambda}} &= \mathrm{diag}(n_1 +\gamma, \dots, n_K +\gamma, \gamma, \dots, \gamma),
\end{aligned}
\end{equation*}
with $\bl{U}$ defined as above satisfies \eqref{eq:main} with respect to $\bs{\Omega}(\bs{\theta})$. Of course this same reasoning applies to the case of a single ($K=1$) ``batch," which for small $\theta_1^2$ would correspond to weak equi-correlation between all errors.}

While the conclusions above have the appeal of being valid for any sample size, they do not yield much insight into the behavior of FLS estimates using random weights. Still, the subscedastic property may be used to assess such estimates when $n$ is large. {One conclusion is that a matrix of random weights $\hat{\bs{\Omega}}_n$ need not be consistent for $\bs{\Omega}$ in order to produce an estimate that eventually outperforms the OLS estimate.} Informally, if $\hat{\bs{\Omega}}_n$ is asymptotically equal to some $\tilde{\bs{\Omega}} \in \mathcal{C}^p_{\bl{\Omega}}$, then $b_{\bl{X}}(\hat{\bs{\Omega}}_n)$ outperforms the OLS estimate if $n$ is large enough. Both \cite{atkinson_robust_2016} and \cite{romano_resurrecting_2017} provide numerical evidence for this claim by evaluating the variance of FLS estimates when they are misspecified with respect to the true form of heteroscedasticity. Here, we give sufficient conditions on the probability limit of feasible weights $\hat{\bs{\Omega}}_n$ so that they yield an estimate that outperforms the OLS estimate as $n \rightarrow \infty$. Since the dimension of $\bs{\Omega}$ grows with $n$, the statement of these conditions requires a modified notation that replaces matrices with infinite sequences. 
\begin{proposition}\label{prop:consistency}
{Let $\{\tilde{\omega}_i\}_{i=1}^{\infty}$, $\{\omega_i\}_{i=1}^{\infty}$ be sequences of positive scalars, let $\{\bl{x}_i\}_{i=1}^{\infty}$ be a sequence of $p$-dimensional vectors, and let $\{y_i\}_{i=1}^\infty$, $\{\hat{\omega}_i\}_{i=1}^{\infty}$ be sequences of random variables. Assuming that
\begin{equation*}
    \lim_{n \rightarrow \infty}\frac{1}{n}\sum_{i=1}^{n} \frac{1}{\tilde{\omega}_i}\bl{x}_i \bl{x}_i^\top~\text{ and }~\lim_{n \rightarrow \infty}\frac{1}{n}\sum_{i=1}^{n} \frac{\omega_i}{\tilde{\omega}^2_i}\bl{x}_i \bl{x}_i^\top
\end{equation*}}
exist, define the coefficient estimates
\begin{equation*}
    b^{n}_{\bl{X}}(\{\tilde{\omega}_i\}_{i=1}^{\infty}) = \left\{\frac{1}{n}\sum_{i=1}^{n} \frac{1}{\tilde{\omega}_i}\bl{x}_i \bl{x}_i^\top \right\}^{-1} \left\{\frac{1}{n}\sum_{i=1}^{n} \frac{y_i}{\tilde{\omega}_i}\bl{x}_i \right\}
\end{equation*}
and
\begin{equation*}
    b^{n}_{\bl{X}}(\{\hat{\omega}_i\}_{i=1}^\infty) = \left\{\frac{1}{n}\sum_{i=1}^{n} \frac{1}{\hat{\omega}_i}\bl{x}_i \bl{x}_i^\top \right\}^{-1} \left\{\frac{1}{n}\sum_{i=1}^{n} \frac{y_i}{\hat{\omega}_i}\bl{x}_i \right\},
\end{equation*}
and suppose that $\{\hat{\omega}_i\}_{i=1}^\infty$ satisfies
\begin{equation*}
\begin{aligned}
    \frac{1}{n} \sum_{i=1}^n (1/\hat{\omega}_i - 1/\tilde{\omega}_i)\bl{x}_i \bl{x}_i^\top &\overset{p}{\longrightarrow} \bl{0}, \\
    \frac{1}{\sqrt{n}} \sum_{i=1}^n (1/\hat{\omega}_i - 1/\tilde{\omega}_i)(y_i - \bl{x}_i^\top\bs{\beta}) \bl{x}_i &\overset{p}{\longrightarrow} \bl{0}.
\end{aligned}
\end{equation*}
Then if $\{\tilde{\omega}_i\}_{i=1}^{\infty}$, $\{\omega_i\}_{i=1}^{\infty}$ are such that
\begin{equation*}
    \mathrm{diag}(\tilde{\omega}_1, \dots, \tilde{\omega}_n) \in \mathcal{C}^p_{\mathrm{diag}({\omega}_1, \dots, \omega_n)}
\end{equation*}
for any positive integer $n \geq 2$, it follows that
\begin{equation*}
    \frac{|\Cov{ b^{n}_{\bl{X}}(\{\hat{\omega}_i\}_{i=1}^{\infty})}|}{|\Cov{ b^n_{\bl{X}}(\{1\})}|} \overset{p}{\longrightarrow} \lim_{n \rightarrow \infty}\frac{|\Cov{ b^{n}_{\bl{X}}(\{\tilde{\omega}_i\}_{i=1}^{\infty})}|}{|\Cov{ b^n_{\bl{X}}(\{1\})}|} \leq 1,
\end{equation*}
where $\{1\}$ denotes the constant sequence.
\end{proposition}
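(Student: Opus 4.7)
The plan is to separate the claim into the deterministic upper bound on the right and the in-probability convergence on the left. The bound $\leq 1$ is immediate from the subscedastic hypothesis: for each $n$, the assumption $\mathrm{diag}(\tilde{\omega}_1,\dots,\tilde{\omega}_n) \in \mathcal{C}^p_{\mathrm{diag}(\omega_1,\dots,\omega_n)}$ combined with the defining inequality of $\mathcal{C}^p_{\bs{\Omega}}$ in \eqref{eq:cpdef} gives $|H_{\bl{X}}(\tilde{\bs{\Omega}}, \bs{\Omega})| \leq |H_{\bl{X}}(\bl{I}_n, \bs{\Omega})|$ for every $\bl{X}$, so the ratio on the right-hand side is bounded by one at each sample size and hence also in the limit.

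For the convergence, I would write the covariance in sandwich form
\begin{equation*}
  \Cov{b^n_{\bl{X}}(\{w_i\})} = \frac{1}{n}\,\bl{A}_n(\{w_i\})^{-1}\,\bl{B}_n(\{w_i\})\,\bl{A}_n(\{w_i\})^{-1},
\end{equation*}
with $\bl{A}_n(\{w_i\}) = \frac{1}{n}\sum_{i=1}^n w_i^{-1}\bl{x}_i\bl{x}_i^\top$ and $\bl{B}_n(\{w_i\}) = \frac{1}{n}\sum_{i=1}^n \omega_i w_i^{-2}\bl{x}_i\bl{x}_i^\top$, and then establish $\bl{A}_n(\{\hat{\omega}_i\}) \overset{p}{\longrightarrow} \lim\bl{A}_n(\{\tilde{\omega}_i\})$ and $\bl{B}_n(\{\hat{\omega}_i\}) \overset{p}{\longrightarrow} \lim\bl{B}_n(\{\tilde{\omega}_i\})$. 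The first statement follows directly from the first convergence hypothesis, combined with the assumed existence of the $\bl{A}$-limit with $\tilde{\omega}_i$. Given these two moment limits, the map $(\bl{A}, \bl{B}) \mapsto |\bl{A}^{-1}\bl{B}\bl{A}^{-1}|$ is continuous on the positive-definite cone, so the continuous mapping theorem (applied simultaneously to numerator and denominator, noting that $\bl{A}_n(\{1\})$ and $\bl{B}_n(\{1\})$ are deterministic) yields the convergence of the ratio to the deterministic ratio $\lim |H_{\bl{X}}(\tilde{\bs{\Omega}}, \bs{\Omega})|/|H_{\bl{X}}(\bl{I}_n, \bs{\Omega})|$.

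The hard part will be the convergence of $\bl{B}_n(\{\hat{\omega}_i\})$, since the second hypothesized convergence is phrased at the level of the reweighted score $n^{-1/2}\sum_i(\hat{\omega}_i^{-1} - \tilde{\omega}_i^{-1})(y_i - \bl{x}_i^\top\bs{\beta})\bl{x}_i$ rather than at the level of the second-moment matrix $\bl{B}_n$. I would exploit the identity $\omega_i = \Var{y_i - \bl{x}_i^\top\bs{\beta}}$ to upgrade the score-level statement to a second-moment statement, either by a direct outer-product calculation that uses the independence structure of $\bs{\varepsilon}$ to replace $(y_i - \bl{x}_i^\top\bs{\beta})(y_j - \bl{x}_j^\top\bs{\beta})$ by $\delta_{ij}\omega_i$ up to a negligible remainder, or by invoking a uniform integrability or moment-boundedness condition implicit in the proposition's setup. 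This conversion is the principal technical obstacle; once $\bl{B}_n(\{\hat{\omega}_i\}) \overset{p}{\longrightarrow} \lim\bl{B}_n(\{\tilde{\omega}_i\})$ is secured, the continuous mapping argument above closes the proof.
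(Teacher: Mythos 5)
Your right-hand bound is fine and matches the paper: Corollary \ref{cor:main1} (equivalently, the defining inequality in \eqref{eq:cpdef}) gives $|H_{\bl{X}}(\tilde{\bs{\Omega}},\bs{\Omega})| \leq |H_{\bl{X}}(\bl{I}_n,\bs{\Omega})|$ at every $n$, hence in the limit. The gap is in the convergence part, and it is the step you yourself flag as the ``principal technical obstacle'': nothing in the hypotheses lets you conclude $\bl{B}_n(\{\hat{\omega}_i\}) = \frac{1}{n}\sum_i \omega_i\hat{\omega}_i^{-2}\bl{x}_i\bl{x}_i^\top \overset{p}{\longrightarrow} \lim_n \bl{B}_n(\{\tilde{\omega}_i\})$. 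The second displayed hypothesis controls only the particular weighted error sum $n^{-1/2}\sum_i(1/\hat{\omega}_i - 1/\tilde{\omega}_i)(y_i-\bl{x}_i^\top\bs{\beta})\bl{x}_i$, not the second-moment matrix with the true $\omega_i$, and no ``outer-product calculation'' or appeal to unstated uniform integrability will bridge that; the proposition simply does not supply such conditions. There is also a conceptual problem with the target you set yourself: for random weights, $\Cov{b^n_{\bl{X}}(\{\hat{\omega}_i\})}$ is \emph{not} the plug-in sandwich $\frac{1}{n}\bl{A}_n(\{\hat{\omega}_i\})^{-1}\bl{B}_n(\{\hat{\omega}_i\})\bl{A}_n(\{\hat{\omega}_i\})^{-1}$, because $\hat{\omega}_i$ depends on $\bl{y}$ (the paper stresses in the introduction that $H_{\bl{X}}$ is the covariance only for fixed weights), so even proving your moment limits would not deliver the stated conclusion without further argument.

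The paper's route avoids all of this: the two hypotheses are exactly the conditions for asymptotic equivalence of the random-weight and fixed-weight estimates. Writing $\sqrt{n}\{b^n_{\bl{X}}(\{\hat{\omega}_i\})-\bs{\beta}\} = \bl{A}_n(\{\hat{\omega}_i\})^{-1}\, n^{-1/2}\sum_i \hat{\omega}_i^{-1}(y_i-\bl{x}_i^\top\bs{\beta})\bl{x}_i$, the first hypothesis replaces $\bl{A}_n(\{\hat{\omega}_i\})$ by $\bl{A}_n(\{\tilde{\omega}_i\})+o_p(1)$ and the second replaces the score by its $\tilde{\omega}$-weighted counterpart up to $o_p(1)$; hence $\sqrt{n}\{b^n_{\bl{X}}(\{\hat{\omega}_i\})-\bs{\beta}\} \overset{d}{\longrightarrow} N_p(\bl{0},\bl{V}^{-1}\bl{B}\bl{V}^{-1})$ with $\bl{V},\bl{B}$ the $\tilde{\omega}$-limits, and the ``covariance'' in the conclusion is this asymptotic covariance. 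The determinant ratio therefore converges to the fixed-weight limit, which is $\leq 1$ for every $n$ by Corollary \ref{cor:main1}. In short: you should use the second hypothesis at the level of the score to transfer the limiting distribution from $b^n_{\bl{X}}(\{\tilde{\omega}_i\})$ to $b^n_{\bl{X}}(\{\hat{\omega}_i\})$, rather than trying to upgrade it to a convergence statement for $\bl{B}_n(\{\hat{\omega}_i\})$, which is neither available nor needed.
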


Proposition \ref{prop:consistency} says that a feasible estimate for the error variances need not have the same parametric form as the ground truth $\bs{\Omega} = \bs{\Omega}(\bs{\theta})$ in order to yield coefficient estimates that eventually outperform the OLS estimate. Here again, the benefits of moderating one's goals in estimation become apparent. Any feasible estimate satisfying the consistency properties of Proposition \ref{prop:consistency} will be asymptotically optimal for exactly one sequence of error variances. On the other hand, the same estimate will outperform OLS for a whole family of such sequences.

This observation motivates a general prescription for designing FLS estimates that are conservative with respect to misspecification of the scedastic function. Namely, one can add a small multiple of the identity to the matrix of feasible weights. For simplicity, consider the finite sample case where $\tilde{\bs{\Omega}} \in \mathcal{D}_+^n$ is non-random. Then for any $\bs{\Omega} \in \mathcal{D}_+^n$,
\begin{equation*}
    \tilde{\bs{\Omega}} \in \mathcal{C}^p_{\bs{\Omega}} \implies \tilde{\bs{\Omega}} + s\bl{I}_n \in \mathcal{C}^p_{\bs{\Omega}},~\forall s>0.
\end{equation*}
This is due to the fact that the identity is the unique matrix that is in $\mathcal{C}^p_{\bs{\Omega}}$ for any choice of $\bs{\Omega} \in \mathcal{D}_+^n$, and the fact that $\mathcal{C}^p_{\bs{\Omega}}$ is a convex cone.
If one defines the set
\begin{equation*}
        \mathcal{W}^p_{\tilde{\bs{\Omega}}} = \{\bs{\Omega} \in \mathcal{D}_n^+ | \tilde{\bs{\Omega}} \in \mathcal{C}^p_{\bs{\Omega}}\},
\end{equation*}
then it follows that
\begin{equation*}
    \mathcal{W}^p_{\tilde{\bs{\Omega}}} \subseteq \mathcal{W}^p_{\tilde{\bs{\Omega}} + s\bl{I}_n},
\end{equation*}
with the inclusion above being strict as long as $\tilde{\bs{\Omega}}$ is not proportional to the identity. The subset of model \eqref{eq:smod} under which $b_{\bl{X}}(\tilde{\bs{\Omega}} + s\bl{I}_n)$ outperforms $b_{\bl{X}}(\bl{I}_n)$ is evidently larger than that under which $b_{\bl{X}}(\tilde{\bs{\Omega}})$ outperforms $b_{\bl{X}}(\bl{I}_n)$. This idea can be combined with the ideas of Proposition \ref{prop:consistency} to obtain a similar statement for FLS estimates with random weights that converge in probability to some fixed $\tilde{\bs{\Omega}}$.

\section{Robust estimates of regression coefficients}\label{sec:robust}

Another informal prescription for dealing with heteroscedasticity when the scedastic function is unknown is to use a robust regression estimate rather than the OLS estimate. Here we demonstrate that subscedastic sets provide an explanation for some of the favorable properties of two robust estimates in the context of the heteroscedastic linear model with normally distributed errors.

First, we consider the maximum marginal likelihood estimate of $\bs{\beta}$ under the hierarchical linear model
\begin{equation}\label{eq:tmod}
    \begin{aligned}
    \bl{y} | \bs{\beta}, \bs{\Omega} &\sim N_n(\bl{X} \bs{\beta}, \bs{\Omega}) \\
    \omega_1, \dots, \omega_n &\overset{iid}{\sim} IG(\nu/2, \nu \omega_0 / 2),
    \end{aligned} 
\end{equation}
where $IG$ denotes the inverse gamma distribution. Marginalizing over the $\omega_i$'s, the $y_i$'s in this model are independent realizations of $t$-distributed random variables, each with mean $\bl{x}_i^\top \bs{\beta}$, scale $\omega_0$, and degrees of freedom $\nu$. The independent $t$ model and its maximum likelihood estimate $\hat{\bs{\beta}}_T$ have previously been studied in the context of robust regression, in particular by \cite{lange_robust_1989} who derived several of its properties in the well-specified case. \cite{lange_normalindependent_1993} also discuss how $\hat{\bs{\beta}}_T$ may be computed using an iteratively reweighted least squares algorithm (see also Section \ref{asec:em} of the Appendix). 

Our interest lies in the asymptotic behavior of $\hat{\bs{\beta}}_T$ in the misspecified case, specifically when the true model is the heteroscedastic linear model \eqref{eq:smod} with normally distributed errors. Because $\hat{\bs{\beta}}_T$ is the solution to a maximization problem, its asymptotic distribution under this type of misspecification can be understood through the framework of $M$-estimation \citep{huber_robust_1973, huber_robust_2009}. Applying the $M$-estimation results of \cite{stefanski_calculus_2002}, we provide the asymptotic covariance matrix of $\hat{\bs{\beta}}_T$ in the following theorem.
\begin{theorem}\label{thm:asymvar}
    The asymptotic distribution of $\hat{\bs{\beta}}_{T}$ in the model \eqref{eq:smod} with normally distributed errors is described by
    \begin{equation*}
        \sqrt{n}(\hat{\bs{\beta}}_{T} - \bs{\beta}) \overset{d}{\longrightarrow} N_p(\bl{0}, \bl{V}^{-1} \bl{B} \bl{V}^{-1})
    \end{equation*}
    where 
    \begin{equation*}
    \begin{aligned}
    	\bl{B} = \lim_{n \rightarrow \infty} \frac{1}{n}\sum_{i=1}^n \frac{1}{f_{\omega_0, \nu}(\omega_i)} \bl{x}_i \bl{x}_i^\top \\
    	\bl{V} = \lim_{n \rightarrow \infty} \frac{1}{n} \sum_{i=1}^n \frac{1}{g_{\omega_0, \nu}(\omega_i)} \bl{x}_i \bl{x}_i^\top,
    \end{aligned}
    \end{equation*}
    and
    \begin{equation*}
    \begin{aligned}
    f_{\omega_0, \nu}(\omega_i) &= 2\omega_i\left\{\left(\sqrt{\frac{\nu \omega_0}{\omega_i}} + \sqrt{\frac{\omega_i}{\nu \omega_0}} \right) e^{\frac{\nu \omega_0}{2\omega_i}} \left(\int_{-\infty}^{-\sqrt{\frac{\nu \omega_0}{\omega_i}}} e^{-z^2/2} dz\right) - 1 \right\}^{-1},\\
    g_{\omega_0, \nu}(\omega_i) &= \omega_i\left\{1 - \sqrt{\frac{\nu \omega_0}{\omega_i}} e^{\frac{\nu \omega_0}{2\omega_i}} \left( \int_{-\infty}^{-\sqrt{\frac{\nu \omega_0}{\omega_i}}} e^{-z^2/2} dz \right)\right\}^{-1}. \\
    \end{aligned}
    \end{equation*}
    Furthermore, $g_{\omega_0, \nu}$ satisfies \eqref{eq:grm} for any $\omega_0, \nu > 0$.
\end{theorem}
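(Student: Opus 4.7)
The plan is to treat $\hat{\bs{\beta}}_{T}$ as an $M$-estimator whose estimating equation is the $t$-score, apply the standard sandwich formula to the misspecified setting in which the errors are actually normal with variances $\omega_i$, and then verify the growth property of $g_{\omega_0,\nu}$ by way of a probabilistic representation. The first step is to compute the score and its gradient for the $t$ log-likelihood, obtaining
\begin{equation*}
    \psi_i(\bs{\beta}) = \frac{(\nu+1)(y_i - \bl{x}_i^\top\bs{\beta})}{\nu\omega_0 + (y_i - \bl{x}_i^\top\bs{\beta})^2}\bl{x}_i,
\end{equation*}
whose Jacobian in $\bs{\beta}$ evaluates to a scalar multiple of $\bl{x}_i\bl{x}_i^\top$. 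The cited $M$-estimation result from \cite{stefanski_calculus_2002} then gives $\sqrt{n}(\hat{\bs{\beta}}_{T} - \bs{\beta}) \overset{d}{\longrightarrow} N_p(\bl{0}, \bl{V}^{-1} \bl{B} \bl{V}^{-1})$ with $\bl{V}$ and $\bl{B}$ the limits of averages of $-\partial\psi_i/\partial\bs{\beta}^\top$ and $\psi_i\psi_i^\top$ taken at the true $\bs{\beta}$ under the heteroscedastic normal model.

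Next I would compute the required Gaussian expectations. Setting $\zeta_i = \varepsilon_i^2/\omega_i \sim \chi^2_1$ and $t_i = \nu\omega_0/\omega_i$, both expectations reduce to moments of $\zeta_i/(t_i + \zeta_i)$ scaled by powers of $1/\omega_i$ and $(\nu+1)$. The substitution $u = v^2$ inside the $\chi^2_1$ density converts $\Exp{1/(t+\zeta)}$ into the Gaussian integral $\frac{2}{\sqrt{2\pi}}\int_0^\infty e^{-v^2/2}/(t+v^2)\,dv$, which evaluates in closed form to $m(\sqrt{t})/\sqrt{t}$, where $m(s) = \sqrt{2\pi}\,e^{s^2/2}\Phi(-s)$ is the Mills ratio; differentiating in $t$ then yields $\Exp{1/(t+\zeta)^2}$. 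Assembling the terms for $\bl{V}$ and $\bl{B}$ and comparing with the stated definitions identifies them as the averages of $1/g_{\omega_0,\nu}(\omega_i)\bl{x}_i\bl{x}_i^\top$ and $1/f_{\omega_0,\nu}(\omega_i)\bl{x}_i\bl{x}_i^\top$, up to matched powers of $(\nu+1)$ that cancel in the sandwich $\bl{V}^{-1}\bl{B}\bl{V}^{-1}$.

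For the \eqref{eq:grm} verification, I would exploit the observation---falling out of the preceding integral computation---that $h(t) := 1 - \sqrt{t}\,m(\sqrt{t}) = \Exp{\zeta/(t+\zeta)}$ for $\zeta \sim \chi^2_1$, which lets us write $g_{\omega_0,\nu}(\omega) = \omega/h(t)$ with $t = \nu\omega_0/\omega$. Differentiating under this expectation gives
\begin{equation*}
    h'(t) = -\Exp{\zeta/(t+\zeta)^2} \leq 0, \quad \frac{d}{dt}\bigl[t\,h(t)\bigr] = \Exp{\zeta^2/(t+\zeta)^2} \geq 0.
\end{equation*}
Since $g_{\omega_0,\nu}(\omega)/\omega = 1/h(t)$ and $t$ is a decreasing function of $\omega$, the first inequality yields condition 2 of \eqref{eq:grm}; and since $g_{\omega_0,\nu}(\omega) = \nu\omega_0/[t\,h(t)]$, the second yields condition 1.

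The main obstacle is the integral bookkeeping in the second step: one must carefully reduce $\Exp{1/(\nu\omega_0 + \varepsilon_i^2)^k}$ to the precise Gaussian tail expressions built into the definitions of $f_{\omega_0,\nu}$ and $g_{\omega_0,\nu}$, and track the $(\nu+1)$ factors so that they cancel correctly in the sandwich. Once the representation $h(t) = \Exp{\zeta/(t+\zeta)}$ is in hand the \eqref{eq:grm} check reduces to one-line monotonicity arguments; without it, condition 1 would demand a rather delicate Mills ratio inequality that is not covered by the standard bound $m(s) \geq s/(1+s^2)$.
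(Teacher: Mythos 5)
Your proposal is correct, and its backbone---casting $\hat{\bs{\beta}}_T$ as an $M$-estimator, applying the \cite{stefanski_calculus_2002} sandwich under the heteroscedastic normal truth, and reducing $\bl{B}$ and $\bl{V}$ to Gaussian expectations in $\zeta=\varepsilon_i^2/\omega_i\sim\chi^2_1$ and $t=\nu\omega_0/\omega_i$---is the same as the paper's; where you differ is in the two technical ingredients, and in both cases your route is shorter. For the integrals, the paper proves the closed forms via Lemmas \ref{lem:intid1} and \ref{lem:intid2} (an exponential representation of $(c+z^2)^{-2}$, Fubini, and repeated integration by parts ending in $\mathrm{erfc}$ terms), whereas you evaluate only $\Exp{1/(t+\zeta)}=m(\sqrt{t})/\sqrt{t}$ and obtain $\Exp{1/(t+\zeta)^2}$ by differentiating in $t$; I checked that this reproduces exactly $(\nu+1)^2/f_{\omega_0,\nu}(\omega_i)$ and $(\nu+1)/g_{\omega_0,\nu}(\omega_i)$, with the matched powers of $(\nu+1)$ cancelling in $\bl{V}^{-1}\bl{B}\bl{V}^{-1}$ just as the paper implicitly uses. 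For the \eqref{eq:grm} claim the divergence is more substantial: the paper works with $h(x)$ of its Lemma \ref{lem:hfun} (equal to $x\,m(x)$ in your notation), establishing its monotonicity through a truncated-normal moment bound (Lemma \ref{lem:tnorm}, which requires the $Q$-function bound of \cite{peric_class_2019}), then two-sided bounds, concavity, and finally monotonicity of $x^2\{1-h(x)\}$; your identity $1-h(\sqrt{t})=\Exp{\zeta/(t+\zeta)}$ collapses all of this into the two one-line facts $\frac{d}{dt}\Exp{\zeta/(t+\zeta)}=-\Exp{\zeta/(t+\zeta)^2}\le 0$ and $\frac{d}{dt}\Exp{t\zeta/(t+\zeta)}=\Exp{\zeta^2/(t+\zeta)^2}\ge 0$, which give conditions 2 and 1 after composing with the decreasing map $\omega\mapsto\nu\omega_0/\omega$. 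In a full write-up you should add the routine dominated-convergence justification for differentiating under the expectation (the $t$-derivatives of the integrands are bounded) and carry out the bookkeeping you already flagged, but there is no gap in the plan. What your route buys is brevity and a transparent probabilistic explanation of why the implied weights are subscedastic; what the paper's route buys is the explicit bounds and concavity of $h$, side products that your argument does not supply.
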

The last part of the result above pertaining to the \eqref{eq:grm} property allows us to obtain insight into how $\hat{\bs{\beta}}_T$ behaves relative to the OLS estimate. Specifically, if $\{\omega_i\}_{i=1}^\infty$ is a bounded sequence, then the limiting generalized variance of $\sqrt{n}\hat{\bs{\beta}}_T$ can be bounded above by a constant multiple of the limiting generalized variance of $\sqrt{n}\hat{\bs{\beta}}_{\mathrm{OLS}}$. Likewise with the limiting total variance.
\begin{corollary}\label{cor:wcbound}
    Suppose that $\sup\{\omega_i\}_{i=1}^\infty = \omega_{\max}$. Then define
    \begin{equation*}
        C(\omega_0, \nu, \omega_\mathrm{max}) = \frac{g_{\omega_0, \nu}(\omega_{\max})^2}{\omega_{\max} f_{\omega_0, \nu}(\omega_{\max})}.
    \end{equation*}
    Then
    \begin{equation}\label{eq:limbd}
        \lim_{n \rightarrow \infty} \tr[\Cov{\sqrt{n} \hat{\bs{\beta}}_T}] \leq C(\omega_0, \nu, \omega_\mathrm{max}) \lim_{n \rightarrow \infty} \tr[\Cov{\sqrt{n} \hat{\bs{\beta}}_{\mathrm{OLS}}}]
    \end{equation}
    and
    \begin{equation}\label{eq:limbd}
        \lim_{n \rightarrow \infty} |\Cov{\sqrt{n} \hat{\bs{\beta}}_T}| \leq C(\omega_0, \nu, \omega_\mathrm{max})^p \lim_{n \rightarrow \infty} |\Cov{\sqrt{n} \hat{\bs{\beta}}_{\mathrm{OLS}}}|.
    \end{equation}
\end{corollary}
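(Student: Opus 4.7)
The plan is to sandwich the asymptotic covariance $\bl{V}^{-1}\bl{B}\bl{V}^{-1}$ of $\sqrt{n}\hat{\bs{\beta}}_T$ between two more tractable matrices. Introduce the auxiliary matrix
\begin{equation*}
\bl{T} \;=\; \lim_{n \to \infty} \frac{1}{n}\sum_{i=1}^n \frac{\omega_i}{g_{\omega_0,\nu}(\omega_i)^2}\, \bl{x}_i \bl{x}_i^\top,
\end{equation*}
so that $\bl{V}^{-1}\bl{T}\bl{V}^{-1}$ is the $n\to\infty$ limit of $n \cdot H_{\bl{X}}(g_{\omega_0,\nu}(\bs{\Omega}), \bs{\Omega})$, i.e., the scaled covariance of the (infeasible) FLS estimate that uses the weights $\tilde{\omega}_i = g_{\omega_0,\nu}(\omega_i)$. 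The proof has two steps: (i) a pointwise scalar bound that implies $\bl{B} \preceq C(\omega_0,\nu,\omega_{\max}) \cdot \bl{T}$ in the Loewner order, and (ii) a subscedastic-weight bound relating $\bl{V}^{-1}\bl{T}\bl{V}^{-1}$ to the OLS asymptotic covariance $\lim_{n\to\infty} n\,\Cov{\hat{\bs{\beta}}_{\mathrm{OLS}}}$. Both parts of the corollary then follow by applying $\tr(\cdot)$ and $|\cdot|$ to the chained Loewner inequality, using the scaling identities $\tr(c \bl{Z}) = c\,\tr \bl{Z}$ and $|c \bl{Z}| = c^p |\bl{Z}|$ to recover the factors $C$ and $C^p$, respectively.

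Step (ii) is essentially immediate from the preceding sections. Theorem \ref{thm:asymvar} tells us $g_{\omega_0,\nu}$ satisfies \eqref{eq:grm}, so by Proposition \ref{prop:mono} we have $g_{\omega_0,\nu}(\bs{\Omega}) \in \mathcal{C}^1_{\bs{\Omega}}$, and Corollaries \ref{cor:main1} and \ref{cor:main2} extend this membership to $\mathcal{C}^p_{\bs{\Omega}}$ together with the total-variance ordering. Hence for every $n$ we have both $|H_{\bl{X}_n}(g_{\omega_0,\nu}(\bs{\Omega}_n), \bs{\Omega}_n)| \leq |H_{\bl{X}_n}(\bl{I}_n, \bs{\Omega}_n)|$ and the analogous trace inequality. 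Multiplying by the appropriate power of $n$ and using continuity of $\det$ and $\tr$ lets us pass to the limit, giving the desired asymptotic inequalities between $\bl{V}^{-1}\bl{T}\bl{V}^{-1}$ and the OLS asymptotic covariance.

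Step (i) is the main obstacle: showing that
\begin{equation*}
h(\omega) \;:=\; \frac{g_{\omega_0,\nu}(\omega)^2}{\omega\, f_{\omega_0,\nu}(\omega)} \;\leq\; C(\omega_0,\nu,\omega_{\max}) \quad \text{for every } \omega \in (0,\omega_{\max}].
\end{equation*}
Because $C$ is by construction $h(\omega_{\max})$, this is equivalent to showing $h$ attains its supremum on the interval at the right endpoint, which I would establish by proving $h$ is monotone non-decreasing on $(0,\infty)$. Substituting $t = \sqrt{\nu\omega_0/\omega}$ and introducing the normal Mills ratio $M(t) = (1-\Phi(t))/\phi(t)$ together with the identity $M'(t) = tM(t) - 1$, the defining integral formulas reduce to $g_{\omega_0,\nu}(\omega) = \omega/(1 - tM(t))$ and $f_{\omega_0,\nu}(\omega) = 2\omega t^2/(1 - (1+t^2)(1-tM(t)))$. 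Writing $u(t) = 1 - tM(t)$ and $v(t) = (1+t^2)u(t)$, the target reduces to $h = (1-v)/(2 u^2 t^2)$, and the claim reduces to checking this one-variable expression is non-increasing in $t$ on $(0,\infty)$, which is a routine but technical calculus exercise that uses the classical sharp bounds $t/(1+t^2) < M(t) < 1/t$. Once the pointwise inequality is in place, integrating against $\bl{x}_i\bl{x}_i^\top/n$ gives $\bl{B} \preceq C \cdot \bl{T}$, sandwiching by $\bl{V}^{-1}$ preserves the Loewner order, and chaining the two bounds and applying $\det(\cdot)$ or $\tr(\cdot)$ completes the proof.
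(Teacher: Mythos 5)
Your proposal is correct and follows essentially the same route as the paper: bound $\bl{B} \preceq C(\omega_0,\nu,\omega_{\max})\,\bl{T}$ via the monotonicity of $\omega \mapsto g_{\omega_0,\nu}(\omega)^2/\{\omega f_{\omega_0,\nu}(\omega)\}$, then use the \eqref{eq:grm} property of $g_{\omega_0,\nu}$ (hence subscedasticity of the weights, via Corollaries \ref{cor:main1} and \ref{cor:main2}) to bound the trace and determinant of $\bl{V}^{-1}\bl{T}\bl{V}^{-1}$ by those of the OLS limit. The only difference is that you sketch a Mills-ratio verification of the monotonicity, which the paper simply asserts.
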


Similar calculations may be done for another well-known robust regression estimate, the Huber estimate, defined as
\begin{equation*}
    \hat{\bs{\beta}}_{H} = \underset{\bs{\beta}}{\mathrm{argmin}}~~ \sum_{i=1}^n \rho_k (y_i - \bl{x}_i^\top \bs{\beta}),
\end{equation*}
where
\begin{equation*}
    \rho_k(z) = \left\{\begin{array}{lc}
         z^2/2 & ~~|z| < k \\
         k|z| - k^2 / 2 & ~~|z| \geq k
    \end{array}\right. .
\end{equation*}
The asymptotic distribution of $\hat{\bs{\beta}}_H$ with respect to \eqref{eq:smod} is easily obtained from the formulae of \cite{huber_robust_1964}. The next result describes this distribution and shows that it too contains weights satisfying the \eqref{eq:grm} property. 
\begin{proposition}\label{prop:huber}
The asymptotic distribution of $\hat{\bs{\beta}}_{H}$ in the model \eqref{eq:smod} with normally distributed errors is described by
\begin{equation*}
    \sqrt{n}(\hat{\bs{\beta}}_{H} - \bs{\beta}) \overset{d}{\longrightarrow} N_p(\bl{0}, \bl{V}^{-1} \bl{B} \bl{V}^{-1})
\end{equation*}
where
\begin{equation*}
    \begin{aligned}
        \bl{B} &= \lim_{n \rightarrow \infty} \frac{1}{n} \sum_{i=1}^n \frac{1}{f_{k}(\omega_i)} \bl{x}_i \bl{x}_i^\top \\
        \bl{V} &= \lim_{n \rightarrow \infty} \frac{1}{n} \sum_{i=1}^n \frac{1}{g_{k}(\omega_i)} \bl{x}_i \bl{x}_i^\top,
    \end{aligned}
\end{equation*}
and
\begin{equation*}
    \begin{aligned}
        f_k(\omega_i) &= \left\{\frac{1}{\sqrt{2\pi\omega_i}}\int_{-k}^k z^2 e^{-z^2/2\omega_i} dz + \frac{k^2}{\sqrt{2\pi\omega_i}} \int_{-\infty}^{-k} e^{-z^2/2\omega_i} dz + \frac{k^2}{\sqrt{2\pi\omega_i}} \int_{k}^\infty e^{-z^2/2\omega_i} dz\right\}^{-1}\\
        g_k(\omega_i) &= \left\{\frac{1}{\sqrt{2\pi\omega_i}}\int_{-k}^k e^{-z^2/2\omega_i} dz\right\}^{-1} .\\
    \end{aligned}
\end{equation*}
Furthermore, $g_{k}$ satisfies \eqref{eq:grm} for all $k > 0$.
\end{proposition}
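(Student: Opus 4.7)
}

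The Huber estimator is an $M$-estimator with score $\psi_k(z) = \rho_k'(z) = z \mathbf{1}\{|z| < k\} + k\,\mathrm{sign}(z) \mathbf{1}\{|z| \geq k\}$, so $\hat{\bs\beta}_H$ solves $\sum_{i=1}^n \psi_k(y_i - \bl{x}_i^\top \bs\beta) \bl{x}_i = 0$. The plan is to apply the standard asymptotic normality result for regression $M$-estimators (as in \cite{huber_robust_1964}): under mild design and moment conditions, $\sqrt{n}(\hat{\bs\beta}_H - \bs\beta) \overset{d}{\to} N_p(\bl 0, \bl V^{-1}\bl B \bl V^{-1})$ with $\bl V = \lim n^{-1}\sum \Exp{\psi_k'(\varepsilon_i)} \bl x_i \bl x_i^\top$ and $\bl B = \lim n^{-1}\sum \Exp{\psi_k(\varepsilon_i)^2} \bl x_i \bl x_i^\top$. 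The function $\psi_k$ is bounded and Lipschitz, so no delicate regularity arguments are needed; the only nontrivial work is to evaluate the two expectations under $\varepsilon_i \sim N(0,\omega_i)$.

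Since $\psi_k'(z) = \mathbf{1}\{|z| < k\}$ (a.e.), the first expectation is $\Exp{\psi_k'(\varepsilon_i)} = P(|\varepsilon_i| < k) = (2\pi \omega_i)^{-1/2}\int_{-k}^{k} e^{-z^2/2\omega_i}dz$, which is precisely $1/g_k(\omega_i)$. For the second, split the integral by $|z| < k$ and $|z| \geq k$: $\Exp{\psi_k(\varepsilon_i)^2} = (2\pi\omega_i)^{-1/2}\int_{-k}^k z^2 e^{-z^2/2\omega_i}dz + k^2 P(|\varepsilon_i| \geq k)$, which writes out as $1/f_k(\omega_i)$. This gives the stated form of the asymptotic covariance.

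For the claim that $g_k$ satisfies \eqref{eq:grm}, the key is a change of variables $u = z/\sqrt{\omega}$ in the defining integral, which yields the compact form
\begin{equation*}
g_k(\omega) = \frac{1}{2\Phi(k/\sqrt{\omega}) - 1},
\end{equation*}
where $\Phi$ is the standard normal CDF. Property 1 (monotone non-decreasing) is then immediate: as $\omega$ increases, $k/\sqrt{\omega}$ decreases, so $2\Phi(k/\sqrt{\omega}) - 1$ decreases, and $g_k(\omega)$ increases. For property 2 (monotone non-increasing $g_k(\omega)/\omega$), substitute $t = k/\sqrt{\omega}$ so that $g_k(\omega)/\omega = t^2 / \{k^2(2\Phi(t) - 1)\}$; since $t$ is a decreasing function of $\omega$, the claim is equivalent to showing $t \mapsto t^2/(2\Phi(t) - 1)$ is non-decreasing on $(0,\infty)$. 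Differentiating, this reduces to the inequality $2\Phi(t) - 1 \geq t\phi(t)$ for $t > 0$, where $\phi$ is the standard normal density. Letting $h(t) = 2\Phi(t) - 1 - t\phi(t)$, one computes $h(0) = 0$ and $h'(t) = \phi(t) - t\phi'(t) = \phi(t)(1 + t^2) > 0$, so $h(t) > 0$ for all $t > 0$, completing the proof.

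The main obstacle is the verification of property 2, which is the only step that is not a direct integral computation or a citation; however, once the substitution $t = k/\sqrt{\omega}$ is made, it collapses to the elementary one-dimensional inequality above. The derivation of the bread and meat matrices relies on well-established $M$-estimator theory and is routine given the bounded Lipschitz score.
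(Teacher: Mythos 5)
Your proposal is correct, and the overall architecture matches the paper's: the asymptotic covariance is obtained from Huber's (1964) $M$-estimation sandwich formulae, with $1/g_k(\omega_i)=\Exp{\psi_k'(\varepsilon_i)}=P(|\varepsilon_i|<k)$ and $1/f_k(\omega_i)=\Exp{\psi_k(\varepsilon_i)^2}$ computed by splitting at $|z|=k$, and the non-decreasing part of \eqref{eq:grm} for $g_k$ follows from the same change of variables $u=z/\sqrt{\omega}$ used in the paper. The one place where you take a genuinely different route is the second half of \eqref{eq:grm}: the paper avoids any inequality by writing $\omega/g_k(\omega)=\sqrt{\omega/(2\pi)}\int_{-k}^k e^{-z^2/2\omega}\,dz$ and differentiating in $\omega$ under the integral sign, which yields a sum of two manifestly positive terms, so $\omega/g_k(\omega)$ is increasing and $g_k(\omega)/\omega$ decreasing with no further work. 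You instead substitute $t=k/\sqrt{\omega}$ to get $g_k(\omega)/\omega=t^2/\{k^2(2\Phi(t)-1)\}$ and reduce the claim to the elementary inequality $2\Phi(t)-1\geq t\phi(t)$, which you verify by noting $h(t)=2\Phi(t)-1-t\phi(t)$ satisfies $h(0)=0$ and $h'(t)=\phi(t)(1+t^2)>0$; this is correct. The paper's version is marginally shorter because no auxiliary inequality is needed; yours has the small advantage of producing the explicit closed form $g_k(\omega)=\{2\Phi(k/\sqrt{\omega})-1\}^{-1}$ and a self-contained Mills-ratio-type bound, which makes the monotonicity transparent without invoking differentiation under the integral sign.
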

In analogy to the result in Corollary \ref{cor:wcbound}, Proposition \ref{prop:huber} implies that the asymptotic generalized and total variance of $\sqrt{n} \hat{\bs{\beta}}_{H}$ are bounded above by a constant times those of a FLS estimate using subscedastic weights.
\begin{corollary}\label{cor:wcboundh}
    Suppose that $\sup\{\omega_i\}_{i=1}^\infty = \omega_{\max}$. Then define
    \begin{equation*}
        C(k, \omega_\mathrm{max}) = \frac{g_{k}(\omega_{\max})^2}{\omega_{\max} f_{k}(\omega_{\max})}.
    \end{equation*}
    Then
    \begin{equation}\label{eq:limbd}
        \lim_{n \rightarrow \infty} \tr[\Cov{\sqrt{n} \hat{\bs{\beta}}_H}] \leq C(k, \omega_\mathrm{max}) \lim_{n \rightarrow \infty} \tr[\Cov{\sqrt{n} \hat{\bs{\beta}}_{\mathrm{OLS}}}]
    \end{equation}
    and
    \begin{equation}\label{eq:limbd}
        \lim_{n \rightarrow \infty} |\Cov{\sqrt{n} \hat{\bs{\beta}}_H}| \leq C(k, \omega_\mathrm{max})^p \lim_{n \rightarrow \infty} |\Cov{\sqrt{n} \hat{\bs{\beta}}_{\mathrm{OLS}}}|.
    \end{equation}
\end{corollary}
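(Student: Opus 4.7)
The proof mirrors that of Corollary \ref{cor:wcbound}, with the roles of $f_{\omega_0,\nu}$ and $g_{\omega_0,\nu}$ played by the Huber weight functions $f_k$ and $g_k$ from Proposition \ref{prop:huber}. The key observation is that if I set $\tilde{\omega}_i := g_k(\omega_i)$ and $\omega_i^* := g_k(\omega_i)^2 / f_k(\omega_i)$, and write $\tilde{\bs{\Omega}}_n = \mathrm{diag}(\tilde{\omega}_1, \dots, \tilde{\omega}_n)$ and $\bs{\Omega}^*_n = \mathrm{diag}(\omega^*_1, \dots, \omega^*_n)$, then the sandwich formula of Proposition \ref{prop:huber} can be rewritten as
\begin{equation*}
\bl{V}^{-1}\bl{B}\bl{V}^{-1} \;=\; \lim_{n \to \infty} n \cdot H_{\bl{X}}(\tilde{\bs{\Omega}}_n, \bs{\Omega}^*_n),
\end{equation*}
which is exactly the form assumed by the subscedastic-set machinery. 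Since also $\Cov{\sqrt{n}\hat{\bs{\beta}}_{\mathrm{OLS}}} = n \cdot H_{\bl{X}}(\bl{I}_n, \bs{\Omega}_n)$, both sides of each displayed inequality reduce to comparisons between finite-sample $H$-matrices, which I establish for each $n$ before passing to the limit.

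Within each $n$, the argument splits into a \emph{weight} step and a \emph{variance} step. For the weight step, Proposition \ref{prop:huber} asserts that $g_k$ satisfies \eqref{eq:grm}, and hence by Proposition \ref{prop:mono} together with Corollary \ref{cor:main1}, $\tilde{\bs{\Omega}}_n \in \mathcal{C}^p_{\bs{\Omega}_n}$. This yields $|H_{\bl{X}}(\tilde{\bs{\Omega}}_n, \bs{\Omega}_n)| \leq |H_{\bl{X}}(\bl{I}_n, \bs{\Omega}_n)|$ from the definition of $\mathcal{C}^p_{\bs{\Omega}}$, while Corollary \ref{cor:main2} delivers the analogous trace inequality. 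For the variance step, I need to bound $\bs{\Omega}^*_n$ by a scalar multiple of $\bs{\Omega}_n$: specifically, I will show
\begin{equation*}
\frac{\omega^*_i}{\omega_i} \;=\; \frac{g_k(\omega_i)^2}{\omega_i\, f_k(\omega_i)} \;\leq\; \frac{g_k(\omega_{\max})^2}{\omega_{\max}\, f_k(\omega_{\max})} \;=\; C(k, \omega_{\max})
\end{equation*}
for every $\omega_i \in (0, \omega_{\max}]$. Because $H_{\bl{X}}(\tilde{\bs{\Omega}}, \cdot)$ is linear and Loewner-monotone in its second argument, this diagonal bound gives $H_{\bl{X}}(\tilde{\bs{\Omega}}_n, \bs{\Omega}^*_n) \preceq C(k, \omega_{\max}) \cdot H_{\bl{X}}(\tilde{\bs{\Omega}}_n, \bs{\Omega}_n)$, and combining with the weight step produces $|H_{\bl{X}}(\tilde{\bs{\Omega}}_n, \bs{\Omega}^*_n)| \leq C(k, \omega_{\max})^p |H_{\bl{X}}(\bl{I}_n, \bs{\Omega}_n)|$ and the analogous trace bound with constant $C(k, \omega_{\max})$. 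Multiplying by $n^p$ (respectively $n$) and letting $n \to \infty$ yields the two displayed inequalities.

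The main obstacle is the scalar monotonicity claim that $h(\omega) := g_k(\omega)^2 / (\omega f_k(\omega))$ attains its supremum on $(0, \omega_{\max}]$ at $\omega = \omega_{\max}$. After substituting the integral representations for $f_k$ and $g_k$ from Proposition \ref{prop:huber} and making the change of variable $u = z/\sqrt{\omega}$, both integrals become functions of the single scaled threshold $t = k/\sqrt{\omega}$, so the question reduces to verifying monotonicity of an explicit ratio of Gaussian integrals in $t$. I would attempt this by differentiating with respect to $t$ and showing the derivative has a definite sign, or alternatively by exploiting the probabilistic interpretation $1/f_k(\omega) = \Exp{\psi_k(\varepsilon)^2}$ and $1/g_k(\omega) = \Exp{\psi_k'(\varepsilon)}$ for $\varepsilon \sim N(0,\omega)$ to apply a Cauchy--Schwarz type argument analogous to the one that must already underlie the proof that $g_k$ satisfies \eqref{eq:grm}. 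A sanity check is that $h(\omega) \to 1$ as $\omega \to 0^+$ and $h(\omega) \to \pi/2$ as $\omega \to \infty$, consistent with monotone growth. Once this one-variable claim is secured, the remainder of the argument is bookkeeping identical to that used for Corollary \ref{cor:wcbound}.
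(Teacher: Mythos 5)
Your proposal follows essentially the same route as the paper's proof: factor the asymptotic Huber sandwich as an $H$-matrix with weights $g_k(\omega_i)$, pull the constant $C(k,\omega_{\max})$ out of the middle matrix via the pointwise bound $g_k(\omega)^2/\{\omega f_k(\omega)\} \leq C(k,\omega_{\max})$ for $\omega \leq \omega_{\max}$, and then use the \eqref{eq:grm} property of $g_k$ together with Corollaries \ref{cor:main1} and \ref{cor:main2} to compare the resulting subscedastic-weight $H$-matrix with the OLS one in determinant and trace. The scalar monotonicity of $C(k,\omega)$ in $\omega$ that you flag as the main remaining obstacle is precisely the fact the paper's own proof asserts without further derivation, so your argument matches the published one.
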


\begin{table}\centering
\begin{tabular}{@{}cc|cc@{}}\toprule
\multicolumn{2}{c}{$t$ MLE} & \multicolumn{2}{c}{Huber} \\ \midrule
$\omega_{\max}/\nu \omega_0$ & $C(\omega_0, \nu, \omega_{\max})$ & $\omega_{\max}/k^2$ & $C(k, \omega_{\max})$ \\ \midrule
0.1 & 1.024994 & 0.1 & 1.000217 \\
0.2 & 1.061614 & 0.2 & 1.005255 \\
0.5 & 1.16679 & 0.5 & 1.045128\\
1 & 1.313124 & 1 & 1.107267 \\
2 & 2.028271 & 2 & 1.184329 \\
5 & 2.592936 & 5 & 1.286343 \\
\bottomrule
\end{tabular}
\caption{Values for the bounding constants appearing in Corollary \ref{cor:wcbound} and Corollary \ref{cor:wcboundh}.}
\label{tab:consts}
\end{table}

It can be shown that $C(\omega_0, \nu, \omega_\mathrm{max})$ and $C(k, \omega_{\max})$ are monotone increasing functions of $\omega_{\max} / \nu \omega_0$ and $\omega_{\max} / k^2$, respectively. Table \ref{tab:consts} gives evaluations of $C(\omega_0, \nu, \omega_\mathrm{max})$ and $C(k, \omega_{\max})$ as a function of these arguments. As an example, when $\omega_{\max}/\nu \omega_0 = 1$, the total variance of $\hat{\bs{\beta}}_T$ is at worst $1.313$ times that of the OLS estimate, so by this measure it is about $75\%$ efficient in the worst case. When $\omega_{\max} / k^2 = 1$, the total variance of $\hat{\bs{\beta}}_H$ is at worst only $1.11$ times that of $\hat{\bs{\beta}}_{\mathrm{OLS}}$, so its worst-case efficiency is about $90 \%$.

\subsection{Inference for $\hat{\bs{\beta}}_T$}\label{sec:inf}

Often, it is of interest to calculate confidence intervals for the coefficients in a regression to accompany point estimates. In what follows we propose a method for calculating asymptotically valid confidence intervals for the $t$-derived estimates examined above. Our confidence intervals are based on standard errors derived from an empirical approximation to the asymptotic covariance matrix in Theorem \ref{thm:asymvar}.

Let $\ell(y_i ; \bs{\beta})$ refer to the log-likelihood of $\bs{\beta}$ for a single observation in the independent $t$ error model \eqref{eq:tmod}. From the calculations in the proof of Theorem \ref{thm:asymvar}, we have
\begin{equation*}
    \nabla \ell(y_i ; \bs{\beta}) \nabla \ell(y_i ; \bs{\beta})^\top = (\nu+1)^2 \frac{(y_i - \bl{x}_i^\top \bs{\beta})^2}{\{\nu \omega_0 + (y_i - \bl{x}_i^\top \bs{\beta})^2\}^2 } \bl{x}_i \bl{x}_i^\top,
\end{equation*}
and
\begin{equation*}
    \nabla^2 \ell(y_i ; \bs{\beta}) =(\nu+1) \frac{(y_i - \bl{x}_i^\top \bs{\beta})^2 - \nu \omega_0}{\{\nu \omega_0 + (y_i - \bl{x}_i^\top \bs{\beta})^2\}^2} \bl{x}_i \bl{x}_i^\top.
\end{equation*}
Replacing the expectations of these quantities with their empirical counterparts evaluated at $\hat{\bs{\beta}}_T$ yields our estimate of the covariance matrix of $\hat{\bs{\beta}}_T$
\begin{equation*}
    \widehat{\Cov{\hat{\bs{\beta}}_T}} = \left\{ \sum_{i=1}^n \nabla^2 \ell(y_i ; \hat{\bs{\beta}}_T) \right\}^{-1}\left\{\sum_{i=1}^n \nabla \ell(y_i ; \hat{\bs{\beta}}_T) \nabla \ell(y_i ; \hat{\bs{\beta}}_T)^\top \right\} \left\{ \sum_{i=1}^n \nabla^2 \ell(y_i ; \hat{\bs{\beta}}_T) \right\}^{-1}.
\end{equation*}
Theorems from \cite{hoadley_asymptotic_1971}, \cite{white_nonlinear_1980}, and \cite{iverson_effects_1989} suggest that $\widehat{\Cov{\hat{\bs{\beta}}_T}}$ consistently estimates $\Cov{\hat{\bs{\beta}}_T}$, so that
\begin{equation*}
    \left[(\hat{\bs{\beta}}_T)_j - Z_{1-\alpha/2} \widehat{\Cov{\hat{\bs{\beta}}_T}}_{jj}^{1/2}, (\hat{\bs{\beta}}_T)_j + Z_{1-\alpha/2} \widehat{\Cov{\hat{\bs{\beta}}_T}}_{jj}^{1/2}\right]
\end{equation*}
defines a confidence interval for the $j^\mathrm{th}$ regression coefficient with asymptotic coverage at level $1 - \alpha$.

\section{Simulations and real-world examples}\label{sec:num}

While both $\hat{\bs{\beta}}_T$ and $\hat{\bs{\beta}}_H$ are less efficient than OLS in the worst case, numerical results presented in this section suggest that both $\hat{\bs{\beta}}_T$ and $\hat{\bs{\beta}}_H$ are more efficient than the OLS estimate for simulated and real design matrices when there is at least a mild degree of heteroscedasticity. Additional real data examples demonstrate that confidence intervals based on $\hat{\bs{\beta}}_T$ and its asymptotic standard errors can substantially outperform common-use variants of heteroscedasticity-consistent intervals based on the OLS estimate.

In Figure \ref{fig:tbehavior}, we compare the variance of three estimates with respect to the model \eqref{eq:smod} with normal errors and fixed design matrix $\bl{X} \in \mathbb{R}^{n \times 4}$, which has entries drawn independently from a standard normal distribution. In this example, $n = 1000$, and the entries of $\bs{\Omega}$ are set to the $1/(n+1), \dots, n/(n+1)$ quantiles of an inverse gamma distribution with parameters $\nu/2, \nu / 2$. We evaluate the \textit{standardized generalized variance} (SGV,~\citealp{SENGUPTA1987209}) of $\hat{\bs{\beta}}_T$, defined as $|\bl{V}_n^{-1} \bl{B}_n \bl{V}_n^{-1}|^{1/p}$, where
\begin{equation}\label{eq:oracle}
\begin{aligned}
    \bl{V}_n = \sum_{i=1}^n \frac{1}{g_{\omega_0, \nu}(\omega_i)}\bl{x}_i \bl{x}_i^\top,~ \bl{B}_n = \sum_{i=1}^n \frac{1}{f_{\omega_0, \nu}(\omega_i)}\bl{x}_i \bl{x}_i^\top.
\end{aligned}
\end{equation}
We evaluate the SGV both for an oracle version of $\hat{\bs{\beta}}_T$, where the degrees of freedom are set to the true value of $\nu$, and for a version of $\hat{\bs{\beta}}_T$ where the degrees of freedom are fixed at $\nu = 7$. The scale parameter $\omega_0$ is set equal to $1$ for both of these estimates. We also evaluate $|H_{\bl{X}}(\bl{I}_n, \bs{\Omega})|^{1/p}$ and $| H_{\bl{X}}(\bs{\Omega}, \bs{\Omega})|^{1/p}$, corresponding to the standardized generalized variance of the ordinary and weighted least squares estimates, respectively.
\begin{figure}
    \centering
    \includegraphics[scale=0.6]{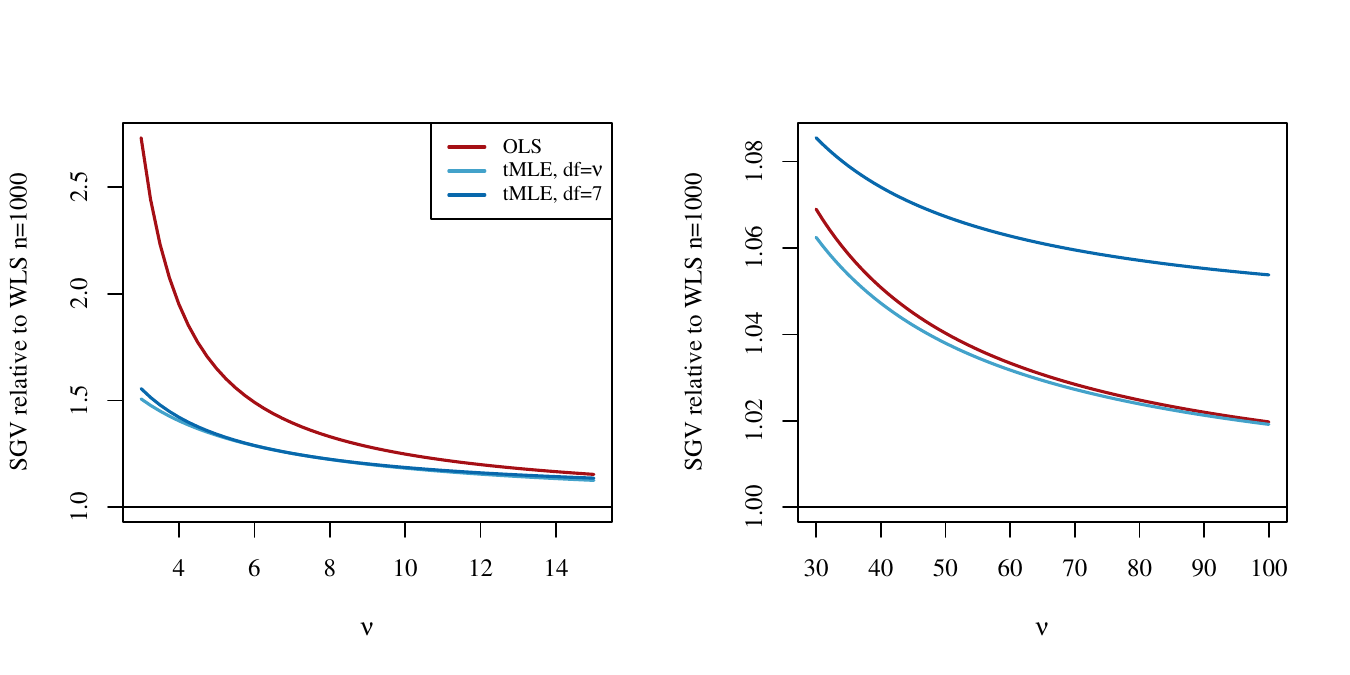}
    \caption{Theoretical behavior of $t$-derived estimates. On the left, a comparison of the standardized generalized variance of three estimates relative to that of the WLS estimate for small degrees of freedom. On the right, the same comparison is made for large degrees of freedom.}
    \label{fig:tbehavior}
\end{figure}

The left panel of Figure \ref{fig:tbehavior} plots the standardized generalized variance of each estimate divided by that of the WLS estimate for values of $\nu$ ranging from $3$ to $15$. The right panel zooms in on the relative standardized generalized variances for the range $\nu = 30$ to $\nu = 100$. For all values of $\nu$ between $3$ and $15$, the oracle $\hat{\bs{\beta}}_T$ has lower standardized generalized variance than the OLS estimate. This is also true of the standardized generalized variance of the non-oracle estimate using the fixed value of $7$ degrees of freedom, and it is interesting to note that these two versions of $\hat{\bs{\beta}}_T$ behave similarly in this range. For values of $\nu$ between $30$ and $100$, the OLS estimate outperforms the non-oracle $\hat{\bs{\beta}}_T$, though the difference between them is small. To summarize, maximum likelihood estimates derived from linear models with independent $t$ errors can be substantially more efficient than the OLS estimate if the dispersion among the elements of $\bs{\Omega}$ is moderate to high. When the dispersion is low, and $\bs{\Omega}$ is close to $\bl{I}_n$, OLS performs better than a non-oracle $t$-derived maximum likelihood estimate, but only by a small amount.

\subsection{Monte Carlo studies of estimation efficiency}

While useful for the purposes of illustration, the previous example is somewhat artificial in terms of the choice of $\bl{X}$ and $\bs{\Omega}$. It also relies only on formulae like those in \eqref{eq:oracle} to calculate the variance of various estimates. The next numerical examples feature a real-data design matrix and approximate SGVs computed using Monte Carlo in addition to those computed using the values of $\bs{\Omega}$ as input. This allows us to compare the theoretical behavior of $t$-derived estimates to their behavior in practice.

Several ground-truth quantities need to be defined for the simulations that follow. First, the design matrix $\bl{X}$ is chosen to be a $2370 \times 17$ matrix, corresponding to a subset of the data collected during a study of the association between the concentration of pesticide byproducts in maternal serum and preterm births \citep{longnecker_association_2001}. Each row of $\bl{X}$ corresponds to a birth occurring between 1959 and 1966. In addition to an intercept term, the columns of $\bl{X}$ are comprised of maternal serum concentrations of 12 environmental contaminants, as well as maternal triglyceride level, age, smoking status, and cholesterol. We scale all non-intercept columns of $\bl{X}$ to have variance equal to $1$.

Next, we set ground truth parameters $\nu, \omega_0, \bs{\beta}$ equal to the maximum marginal likelihood estimates of the parameters in the independent $t$ model \eqref{eq:tmod}, where the dependent variable $\bl{y}$ is the gestational age---also recorded as part of the \cite{longnecker_association_2001} study---of each of the births in $\bl{X}$. These parameters are computed using an EM-algorithm, which we describe in the Appendix (see also \cite{lange_normalindependent_1993, liu_ml_1995}). Finally, the error covariance matrix $\bs{\Omega}$ is set equal to a $2370 \times 2370$ diagonal matrix, whose diagonal entries are independent draws from an inverse gamma distribution with parameters $\nu/2, \nu \omega_0 / 2$. In preparation for the simulation study, we also preallocate the submatrices $\bl{X}_{1:n}$ consisting of the first $n$ rows of $\bl{X}$ for $n \in \{50, 100, 200, 500, 1000, 2370\}$. Similarly, we form the error covariance matrices $\bs{\Omega}_{1:n,1:n}$ consisting of the first $n$ rows and columns of $\bs{\Omega}$.

Using these quantities as our ground-truth, we evaluate the SGV of six estimates of $\bs{\beta}$ with respect to the heteroscedastic normal linear model \eqref{eq:smod}. The estimates are: the OLS estimate, the $t$ estimate with estimated scale parameter and estimated degrees of freedom, the $t$ estimate with estimated scale parameter and $7$ degrees of freedom, the ``oracle" $t$ estimate with scale parameter and degrees of freedom equal to the ground-truth $\omega_0, \nu$, the Huber estimate, and the WLS estimate. For each $n$, the SGV of the non-oracle $t$-derived estimates are computed using Monte Carlo; that is, we simulate $1000$ instances of $\bl{y} \in \mathbb{R}^n$ according to \eqref{eq:smod}, compute a $\bs{\beta}_T^*$ for each instance in order to form an $1000 \times 17$ matrix of estimates, compute the sample covariance matrix of the estimates, and then take the geometric mean of the eigenvalues of this matrix. For the ordinary and weighted least squares estimates, we use the formulae $|H_{\bl{X}_{1:n}}(\bl{I}_n, \bs{\Omega}_{1:n, 1:n})|^{1/17}$ and $|H_{\bl{X}_{1:n}}(\bs{\Omega}_{1:n, 1:n}, \bs{\Omega}_{1:n, 1:n})|^{1/17}$, respectively. For the oracle $t$ estimate, we use $|\bl{V}_n^{-1} \bl{B}_n \bl{V}_n^{-1}|^{1/17}$ with $\bl{V}_n, \bl{B}_n$ defined as in $\eqref{eq:oracle}$.
\begin{figure}
    \centering
    \includegraphics[scale=0.6]{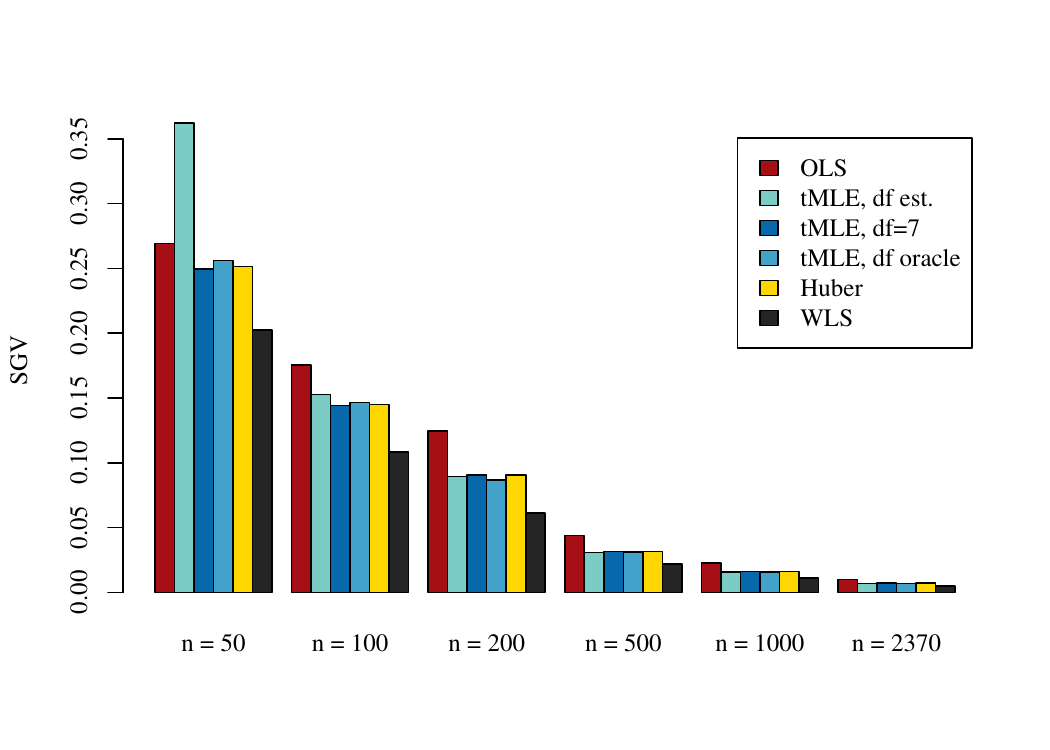}
    \caption{Results of the first simulation study using the \cite{longnecker_association_2001} dataset, which uses a sample of inverse gamma distributed random variables to specify the heteroscedasticity.}
    \label{fig:longnecker}
\end{figure}

Figure \ref{fig:longnecker} displays the SGV for each of the estimates described above. When interpreting these results, it should be kept in mind that only the OLS estimate, the Huber estimate, and the two non-oracle $t$-derived estimates can be computed in practice. For $n \geq 200$, the $t$-derived estimate with estimated degrees of freedom slightly outperforms the estimate with fixed degrees of freedom, though the opposite is true for $n < 200$. The Huber estimate performs comparably to all three $t$-derived estimates. We also note that the behavior of both non-oracle $t$ estimates closely matches that of the oracle $t$ estimate for $n \geq 200$, which provides some assurance that the asymptotic formulae derived in Theorem \ref{thm:asymvar} hold, and that the rate of convergence to this limit is not too slow.

Next, we conduct a simulation similar to the one above, but using a different specification of heteroscedasticity to set up a comparison to parametric FLS estimates. Specifically, we set
\begin{equation*}
    \omega_i = 1.1|x_{i, 15}|^3 |x_{i, 16}|^2,~i=1, \dots, n,
\end{equation*}
which is a particular instance of the flexible parametric model of heteroscedasticity
\begin{equation*}
    v_{\bs{\theta}}(\bl{x}_i) = e^{\theta_1} \prod_{j=2}^p |x_{i, j}|^{\theta_j} 
\end{equation*}
suggested by \cite{romano_resurrecting_2017}. Columns 15 and 16 of $\bl{X}$ correspond to maternal age and smoking status, respectively, and, as before, the first column of $\bl{X}$ is $\bl{1}_n / \sqrt{n}$. All other aspects of this simulation are then the same as above, except we substitute two parametric FLS estimates for the oracle $t$ estimate. The first estimate takes the form $b_{\bl{X}_{1:n}}(\hat{\bs{\Omega}}_n)$ for
\begin{equation*}
    \hat{\bs{\Omega}}_n = \mathrm{diag}(v_{\hat{\bs{\theta}}_n}(\bl{x}_1), \dots, v_{\hat{\bs{\theta}}_n}(\bl{x}_n)),
\end{equation*}
where $\hat{\bs{\theta}}_n$ is the OLS solution to the regression implied by
\begin{equation*}
    \log\{\max(0.01^2, \hat{\varepsilon}_i^2)\} = \theta_1 + \sum_{j=2}^{17} \theta_j \log|x_{i,j}| + z_i,~ z_i \sim N(0,1),~i=1, \dots, n,
\end{equation*}
and $\hat{\varepsilon}_i$ is the $i$th residual from the OLS fit of $\bs{\beta}$. The second estimate is based off of a model of heteroscedasticity suggested by \cite{wooldridge2016introductory}, which is misspecified in this case. It takes the form $b_{\bl{X}_{1:n}}(\hat{\bs{\Omega}}_n)$ for
\begin{equation*}
    \hat{\bs{\Omega}}_n = \mathrm{diag}(u_{\hat{\bs{\theta}}_n}(\bl{x}_1), \dots, u_{\hat{\bs{\theta}}_n}(\bl{x}_n)),
\end{equation*}
for $u_{\bs{\theta}}(\bl{x}_i) = e^{\theta_1 + \sum_{j=2}^p \theta_j x_{ij}}$, where $\hat{\bs{\theta}}_n$ is the OLS solution to the regression implied by
\begin{equation*}
    \log\{\max(0.01^2, \hat{\varepsilon}_i^2)\} = \theta_1 + \sum_{j=2}^{17} \theta_j x_{i,j} + z_i,~ z_i \sim N(0,1),~i=1, \dots, n.
\end{equation*}
We evaluate the SGV of both the correctly specified and the misspecified parametric FLS estimates using Monte Carlo and display them along with the SGV of the other estimates in Figure \ref{fig:longnecker_parametric}.

\begin{figure}
    \centering
    \includegraphics[scale=0.6]{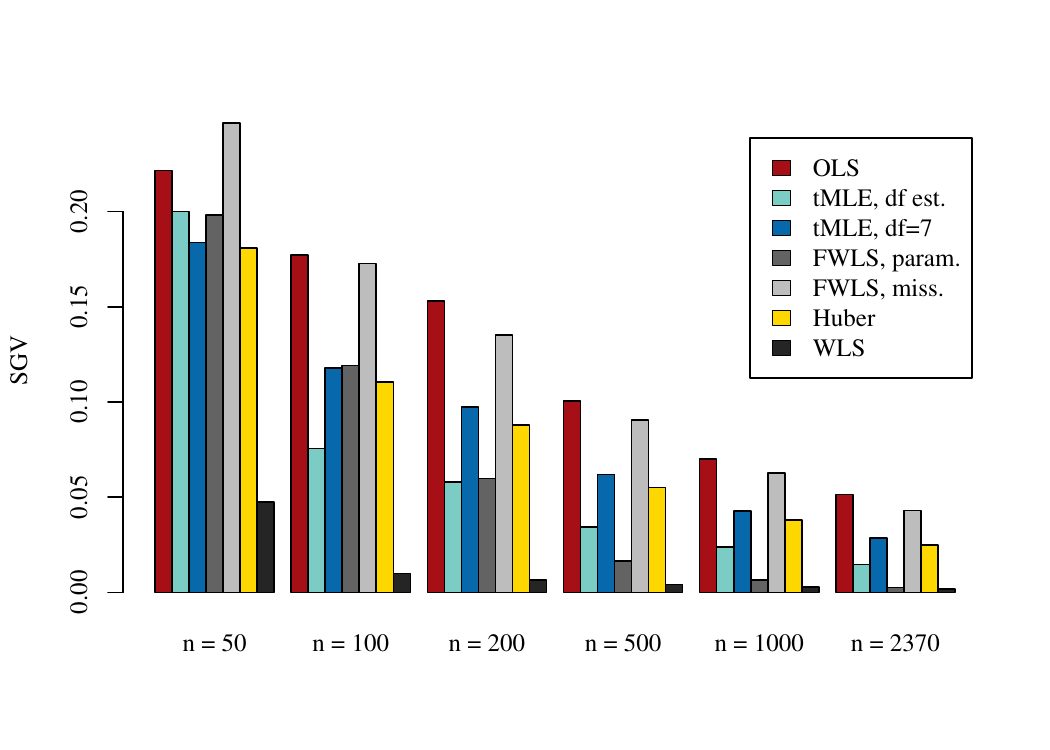}
    \caption{Results of the second simulation study using the \cite{longnecker_association_2001} dataset, which uses a parametric model to specify the heteroscedasticity.}
    \label{fig:longnecker_parametric}
\end{figure}

Here we see that the SGV of both $t$-derived estimates is less than that of the OLS estimate for all values of $n$. Notably, the $t$ estimate with adaptive degrees of freedom is substantially more precise relative to the $t$ estimate with $\nu = 7$ for $n \geq 100$. The $t$-estimate with adaptive degrees of freedom also outperforms the Huber estimate for $n \geq 100$.

These results also suggest that the $t$-derived estimate with adaptive degrees of freedom performs quite favorably relative to the FLS estimate for $n \leq 200$, and this is when the parametric form of heteroscedasticity is correctly specified. Of course, when $n$ is large, the correctly specified parametric FLS estimate is nearly optimal, while the $t$-derived estimates lag behind. However, the misspecified parametric FLS estimate is outperformed by both $t$ estimates and the Huber estimate. All of the characterizations above also apply to the mean squared error of the estimates (see the Appendix, Section \ref{asec:figs}), which, for $n$ large, is proportional to the total variance.


\subsection{Interval coverage for realistic data generating processes}

To evaluate the soundness of the confidence intervals for $\hat{\bs{\beta}}_T$ proposed in Section \ref{sec:inf}, we consider four additional regression problems based on four real-world datasets. The \texttt{food}, \texttt{cps2}, and \texttt{andy} datasets are available in the R package that accompanies the 5th edition of Principles of Econometrics \citep{hill_principles_2018}. The \texttt{Boston} dataset is available in the MASS R package. The regression problems we consider are described by the dependent and independent variables listed below. Dependent variables are marked with an asterisk.

\begin{itemize}[noitemsep]
    \item \texttt{food}
    \begin{itemize}[noitemsep]
        \item \textit{food\_exp}* : weekly food expenditure in \$1s
        \item \textit{income} : weekly income in \$100s
    \end{itemize}
    \item \texttt{cps2}
    \begin{itemize}[noitemsep]
        \item $\log(\textit{wage})$* : log of earnings per hour
        \item \textit{educ} : years of education
        \item \textit{exper} : years of workforce experience
    \end{itemize}
    \item \texttt{andy}
    \begin{itemize}[noitemsep]
        \item \textit{sales}* : monthly sales revenue in \$1000
        \item \textit{price} : price index for all products sold in a given month
        \item \textit{advert} : expenditure on advertising in \$1000s
    \end{itemize}
    \item \texttt{Boston}
    \begin{itemize}[noitemsep]
        \item $\log(\textit{medv})$* : log of median value of owner-occupied homes in \$1000s.
        \item \textit{nox} : nitrogen oxides concentration (parts per 10 million)
        \item \textit{dis} : weighted mean of distances to five Boston employment centers
        \item \textit{rm} : average number of rooms per dwelling
        \item \textit{ptratio} : pupil-teacher ratio by town
    \end{itemize}
\end{itemize}

For each dataset, we use a wild bootstrap method to generate artificial replicate datasets. Specifically, we generate new datasets according to
\begin{equation*}
    y^*_i = \bl{x}_i^\top\hat{\bs{\beta}}_{\mathrm{OLS}} + \frac{\hat{\epsilon}_i}{\sqrt{1 - h_i}} z^*_i,~~z_i^* \sim N(0, 1),~~~i = 1, \dots, n,
\end{equation*}
where $\hat{\bs{\beta}}_{\mathrm{OLS}}$ is the OLS estimate of $\bs{\beta}$ computed on the real dependent variable $\bs{y}$, $\hat{\epsilon}_i$ is the $i$th residual from this OLS fit, and $h_i$ is the $i$th diagonal element of $\bl{X}(\bl{X}^\top \bl{X})^{-1} \bl{X}^\top$. For each replicate, we compute six different 95\% confidence intervals for each entry of $\bs{\beta}$, and we evaluate the width of each interval as well as whether it contains the ground truth (in this case, the appropriate entry of $\hat{\bs{\beta}}_{\mathrm{OLS}}$), or not. The results of this study are displayed in Table \ref{tab:coverage}.

\begin{table}\centering
\begin{tabular}{@{}llcccccc@{}}\toprule
& & Hom. & HC0 & HC1 & HC2 & HC3 & $t$ \\ \midrule
\texttt{food} & \textit{income} & 97.9 & 93.5 & 94.2 (1.03) & 94.6 (1.04) & 95.4 (1.07) & 94.1 (\textbf{0.74}) \\ \midrule
& \textit{educ} & 94.6 & 95.0 & 95.0 (1.00) & 95.1 (1.00) & 95.1 (1.01) & 95.0 (\textbf{0.53}) \\
\texttt{cps2} & \textit{exper} & 95.1 & 95.6 & 95.6 (1.00) & 95.7 (1.01) & 95.8 (1.01) & 95.5 (\textbf{0.54}) \\
& \textit{exper}$^2$ & 93.0 & 95.2 & 95.3 (1.00) & 95.4 (1.00) & 95.7 (1.02) & 95.0 (\textbf{0.53})  \\\midrule
\multirow{2}{*}{\texttt{andy}}& \textit{price} & 96.5 & 93.9 & 94.5 (1.02) & 94.6 (1.02) & 95.0 (1.05) & 94.2 (\textbf{0.77}) \\
& \textit{advert} & 95.5 & 94.3 & 94.8 (1.02) & 94.8 (1.02) & 95.2 (1.05) & 94.4 (\textbf{0.75}) \\
\midrule
& $\log(\textit{nox})$ & 93.6 & 95.1 & 95.3 (1.00) & 95.3 (1.00) & 95.4 (1.02) & 95.1 (\textbf{0.36})\\
\multirow{2}{*}{\texttt{Boston}}& $\log(\textit{dis})$ & 89.7 & 95.3 & 95.4 (1.00) & 95.4 (1.01) & 95.5 (1.02) & 94.6 (\textbf{0.33}) \\
& \textit{rm} & 86.1 & 95.2 & 95.3 (1.00) & 95.6 (1.01) & 95.7 (1.02) & 94.6 (\textbf{0.34}) \\
& \textit{ptratio} & 98.9 & 95.2 & 95.3 (1.00) & 95.3 (1.01) & 95.4 (1.02) & 95.0 (\textbf{0.49}) \\
\bottomrule
\end{tabular}
\caption{Coverage percentage of confidence intervals for non-intercept regression coefficients in four regression problems. The ratio of the average interval width to the average width of the HC0 interval are included in parentheses for the HC1-3 and $t$ intervals.}
\label{tab:coverage}
\end{table}

The column of Table \ref{tab:coverage} labeled ``Hom."  corresponds to the usual confidence interval for a regression coefficient under the assumption of homoscedasticity, which is not guaranteed to have the correct coverage under heteroscedasticity. Indeed there is evidence that under- and over- coverage occurs for certain coefficients. The HC columns of Table \ref{tab:coverage} correspond to the \cite{white_heteroskedasticity-consistent_1980} heteroscedasticity-consistent intervals (HC0) and variants thereof (HC1-3, \citealp{mackinnon_heteroskedasticity-consistent_1985}), which all come with coverage guarantees under heteroscedasticity. All HC intervals appear to achieve the correct coverage, and their average widths relative to that of the HC0 interval are included in parentheses next to their coverage level. Finally, the coverage levels of the confidence intervals based on the asymptotic covariance of $\hat{\bs{\beta}}_T$ are presented in the right-most column. The coverage of these $t$-derived intervals is comparable to that of the HC intervals, while the average width is significantly less than that of the HC intervals for all coefficients and all regression problems.


\section{Discussion}\label{sec:disc}

The experiments of the previous section suggest that $t$-derived estimates can be substantially more efficient than the OLS estimate in the heteroscedastic linear model with normally distributed errors. The theoretical results in this article suggest that this improvement in efficiency can be attributed to a quasi-oracle property of the $t$-derived estimates: in the limit, these estimates are sub-optimal, but are still preferable to OLS because they are similar to a FLS estimate using subscedastic weights. From the perspective of point estimation in the heteroscedastic linear model, we contend that $t$-derived estimates---perhaps with fixed degrees of freedom for $n < 100$---may be used as a default estimate preferable to OLS. For inference purposes, we present evidence that asymptotically valid confidence intervals for $t$-derived estimates may be obtained from an empirical approximation to the asymptotic covariance matrix. A more careful theoretical justification for the use of these intervals may be warranted in future work. However, the empirical results presented here are promising in that the average interval width tends to be much smaller than that of popular heteroscedasticity-consistent alternatives.

{In other future theoretical work, we hope to establish more results that apply to non-diagonal covariance matrices. Part of the challenge with doing so is that such results must depend on simultaneous conditions on the eigenvectors and eigenvalues of $\bs{\Omega}$ and $\tilde{\bs{\Omega}}$, which may be more opaque than the relatively simple subscedastic criterion in \eqref{eq:main}.} {It is also possible that subscedastic sets described by different criteria than \eqref{eq:main} might arise by using other measures of relative multivariate variance besides the generalized and total variances considered in this article. However, we conjecture that \eqref{eq:main} is the necessary and sufficient subscedastic condition for at least one additional measure of multivariate relative variance, described by
\begin{equation}\label{eq:rtrace}
    \frac{1}{p}\tr \left\{ H_{\bl{X}}(\tilde{\bs{\Omega}}, \bs{\Omega})H_{\bl{X}}(\bl{I}_n, \bs{\Omega})^{-1}\right\}.
\end{equation}
While it is relatively easy to verify the necessity of \eqref{eq:main} in this case, we have not yet found a proof of its sufficiency.}

Finally, we have also shown that subscedastic sets can be a lens through which to analyze regression $M$-estimates other than those derived from the $t$-distribution. It would be interesting to investigate more connections between subscedasticity, the influence functions of various robust regression estimates, and the distribution of the error terms.

\bibliographystyle{apalike}
\bibliography{paper-ref.bib}

\appendix

\section{Proofs}\label{asec:proofs}

\subsection{{Proof of Proposition \ref{prop:mono}}}

\begin{proof}
Let $p=1$, and for each $i \in \{1, \dots, n\}$, let $\tilde{\omega}_i = g(\omega_i)$ where $g$ is a monotone non-decreasing function such that the function $f(\omega) = g(\omega)/\omega$ is monotone non-increasing. Set $\tilde{\bs{\Phi}} = \tilde{\bs{\Omega}}^{-1}$, and let $\mathcal{V}^n$ denote the set of all $n$-dimensional unit vectors. 

As discussed in the main text, for any unit vector $\bl{u}$ the functions $\mathrm{e}_{\bl{u}}$ and $ \mathrm{c}_{\bl{u}}$ behave, respectively, like the expectation and covariance functions of discrete random variables with supports determined by the diagonal entries of $\tilde{\bs{\Omega}}, \bs{\Omega}$ and probability mass functions determined by $\bl{u}$. Therefore, by \cite{schmidt_inequalities_2014} Corollary 3.1,
\begin{equation*}
    \mathrm{c}_{\bl{u}}(\tilde{\bs{\Phi}}, \bs{\Omega}) \leq 0,~\forall \bl{u} \in \mathcal{V}^n,
\end{equation*}
and
\begin{equation*}
    \mathrm{c}_{\bl{u}}(\tilde{\bs{\Phi}}, \tilde{\bs{\Phi}} \bs{\Omega}) \leq 0,~\forall \bl{u} \in \mathcal{V}^n.
\end{equation*}
Thus, 
\begin{equation*}
    \mathrm{c}_{\bl{u}}(\tilde{\bs{\Phi}}, \tilde{\bs{\Phi}} \bs{\Omega}) + \mathrm{e}_{\bl{u}}(\tilde{\bs{\Phi}})\mathrm{c}_{\bl{u}}(\tilde{\bs{\Phi}}, \bs{\Omega})\leq 0,~\forall \bl{u} \in \mathcal{V}^n,
\end{equation*}
so $\tilde{\bs{\Omega}} \in \mathcal{C}^1_{\bs{\Omega}}$.
\end{proof}

\subsection{{Proof of Proposition \ref{prop:bd}}}

\begin{proof}
    Let $g$ be a function satisfying \eqref{eq:grm}. Define $f : \mathbb{R}_+ \rightarrow \mathbb{R}_+$ as
    \begin{equation*}
        f(\omega) = \frac{1}{1/g(\omega) + \lambda} + \gamma.
    \end{equation*}
    Because $g$ is increasing in $\omega$, $f$ must be increasing in $\omega$. Looking at
    \begin{equation*}
        f(\omega)/w = \frac{1}{w/g(\omega) + w\lambda} + \gamma/w
    \end{equation*}
    one sees that $\gamma/w$ is decreasing in $\omega$, $\omega \lambda$ is increasing in $\omega$, and because $g$ satisfies \eqref{eq:grm} $\omega / g(\omega)$ is increasing in $\omega$. This implies that $f(\omega)/\omega$ is decreasing in $\omega$; hence, $f$ satisfies \eqref{eq:grm}.

    Since $g$ is a positive-valued function, and since both $f$ and $g$ are increasing in $\omega$, $f$ will be bounded below by the limit as $g(\omega) \rightarrow 0$, which is $\gamma$. It will be bounded above by the limit as $g(\omega) \rightarrow \infty$, which is $\gamma + 1/\lambda$.
\end{proof}

\subsection{{Proof of Proposition \ref{prop:majorization}}}

\begin{proof}
    Let $(\omega_{(1)}, \dots, \omega_{(n)})$ be the arrangement of $\{\omega_i\}_{i=1}^n$ in decreasing order. Since $g$ satisfies \eqref{eq:grm}, $\omega_{(i)} / g(\omega_{(i)})$ is decreasing in $i = 1, \dots, n$. Apply \cite{marshall_inequalities_2011} Proposition B.1.b. to complete the proof.
\end{proof}

\subsection{{Proof of Theorem \ref{thm:main}}}
To prove Theorem \ref{thm:main}, we first present the following lemma:
\begin{lemma}\label{lem:zeros}
Let $\tilde{\bs{\Phi}}, \bs{\Omega} \in \mathcal{D}_+^n$. Let $\mathcal{V}^n$ denote the set of all $n$-dimensional unit vectors. Define the function $k : \mathbb{R}^n \rightarrow \mathbb{R}$ by
\begin{equation*}
    k(\bl{u}) = \bl{u}^\top \tilde{\bs{\Phi}} \bs{\Omega} \tilde{\bs{\Phi}} \bl{u} - (\bl{u}^\top \tilde{\bs{\Phi}} \bl{u})^2 (\bl{u}^\top \bs{\Omega} \bl{u}).
\end{equation*}
Then
\begin{equation*}
    \sup_{\bl{u} \in \mathcal{V}^n}~k(\bl{u}) = \sup_{\bl{u} \in \mathcal{V}^n, \|\bl{u}\|_0 \leq 2}~k(\bl{u}).
\end{equation*}
\end{lemma}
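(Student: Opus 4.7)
The plan is to reduce the maximization over $\mathcal{V}^n$ to a maximization of a degree-$3$ polynomial on a three-dimensional convex polytope. Observe that $k(\bl{u}) = P - B^2 C$, where $B := \sum_i u_i^2 \tilde{\phi}_i$, $C := \sum_i u_i^2 \omega_i$, and $P := \sum_i u_i^2 \tilde{\phi}_i^2 \omega_i$. As $\bl{u}$ varies over $\mathcal{V}^n$, the triple $(B, C, P)$ traces out the convex hull $K \subset \mathbb{R}^3$ of the $n$ points $\{(\tilde{\phi}_i, \omega_i, \tilde{\phi}_i^2 \omega_i)\}_{i=1}^n$. Crucially, vectors $\bl{u}$ with $\|\bl{u}\|_0 \leq 2$ correspond exactly to triples $(B, C, P)$ lying on the $1$-skeleton of $K$ (its vertices and edges), so it suffices to show that $f(B, C, P) := P - B^2 C$ attains its maximum over $K$ on this $1$-skeleton.

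First, since $f$ is linear and strictly increasing in $P$, $\max_K f$ is attained on the upper envelope $\partial^+ K := \{(B, C, P) \in K : P = \max\{P' : (B, C, P') \in K\}\}$, which is a union of $2$-dimensional faces of $K$. On each such face, $P = \alpha + \beta B + \gamma C$ for some constants $\alpha, \beta, \gamma$, so $f$ restricts there to the polynomial $\alpha + \beta B + \gamma C - B^2 C$. Its Hessian in $(B, C)$ is
\[
\begin{pmatrix} -2C & -2B \\ -2B & 0 \end{pmatrix},
\]
with determinant $-4B^2$. Because $\tilde{\phi}_i > 0$ forces $B > 0$ throughout $K$, this determinant is strictly negative, the Hessian is everywhere indefinite on the face, and any interior critical point of $f$ must be a saddle. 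Hence the maximum on each closed $2$-dimensional face of $\partial^+ K$ is attained on its boundary, i.e., on an edge or vertex of $K$, completing the reduction to the $1$-skeleton.

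The main obstacle will be the Hessian step above, verifying that $2$-dimensional faces contribute no interior maxima; this is immediate once the $2 \times 2$ determinant is computed and the strict positivity of $\tilde{\phi}_i$ is invoked. A minor technicality is that $K$ may be lower-dimensional, e.g. if the points $(\tilde{\phi}_i, \omega_i, \tilde{\phi}_i^2 \omega_i)$ happen to be coplanar or collinear, but in that case $\partial^+ K$ is itself already at most $1$-dimensional and the conclusion follows immediately.
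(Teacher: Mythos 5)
Your argument is correct, and it takes a genuinely different route from the paper's. The paper works directly with $\bs{\alpha} = \bl{u} \odot \bl{u}$ on the simplex and reduces the support iteratively: for support size $d>4$ it finds a direction annihilating the four linear functionals $\bs{\alpha}^\top(\tilde{\bs{\phi}}\odot\tilde{\bs{\phi}}\odot\bs{\omega})$, $\bs{\alpha}^\top\bs{\omega}$, $\bs{\alpha}^\top\tilde{\bs{\phi}}$, $\bs{\alpha}^\top\bl{1}_n$, supported on the support of $\bl{u}$, and slides along it to zero out one coordinate without changing $k$; the cases $d=4$ and $d=3$ are then handled separately with relaxed constraint systems that strictly increase $k$. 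You instead pass to the moment map $\bs{\alpha}\mapsto(B,C,P)$, identify the feasible set with the polytope $K=\mathrm{conv}\{(\tilde{\phi}_i,\omega_i,\tilde{\phi}_i^{2}\omega_i)\}_{i=1}^n$, and exclude maxima off the $1$-skeleton via monotonicity of $P-B^2C$ in $P$ together with indefiniteness of its Hessian on any plane where $P$ is affine in $(B,C)$ (determinant $-4B^2<0$ since $B>0$). Your route is more conceptual and uniform---one argument instead of a case split on $d$---and it makes geometrically transparent why two-sparse vectors suffice; the paper's route is more elementary and self-contained, using only null spaces of small linear systems. Two details to tighten, neither fatal: the correspondence between $\|\bl{u}\|_0\le 2$ and the $1$-skeleton is only an inclusion (every $1$-skeleton point is a two-point convex combination of the $p_i$, which is the direction you actually use; a segment $[p_i,p_j]$ need not be an edge of $K$); and in the degenerate case where the $p_i$ are coplanar in a non-vertical plane, $\partial^+K=K$ is two-dimensional rather than ``at most $1$-dimensional'' as you assert, but your own Hessian argument applied to $K$ as a single planar face closes that case, and relative-interior points of vertical faces are likewise excluded because moving in the $+P$ direction within such a face strictly increases the objective.
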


\begin{proof}
We will prove the statement by providing, for any $\bl{u} \in \mathcal{V}^n$, a corresponding $\bl{v} \in \mathcal{V}^n$ with $\|\bl{v}\|_{0} \leq 2$ such that
\begin{equation*}
    k(\bl{u}) \leq k(\bl{v}).
\end{equation*}
Given $\tilde{\bs{\Phi}}, \bs{\Omega} \in \mathcal{D}_+^n$, let $\bl{u} \in \mathcal{V}^n$ and let $\bs{\alpha} = \bl{u} \odot \bl{u}$, where $\odot$ denotes the Hadamard product. Further, let $\bs{\alpha}^{1/2}$ denote the entrywise positive square root of $\bs{\alpha}$ so that $(\bs{\alpha}^{1/2})_i = |u_i|$ for each $i \in \{1, \dots, n\}$. Note that in terms of $\bs{\alpha}$, $k(\bl{u})$ may be written as
\begin{equation}\label{eq:simplex}
    \bs{\alpha}^\top (\tilde{\bs{\phi}} \odot \tilde{\bs{\phi}} \odot \bs{\omega}) - (\bs{\alpha}^\top \tilde{\bs{\phi}})^2 (\bs{\alpha}^\top \bs{\omega}),
\end{equation}
where $\tilde{\phi}, \bs{\omega}$ are $n$-dimensional vectors containing the diagonal elements of $\tilde{\bs{\Phi}}, \bs{\Omega}$, respectively.
    
If $\|\bl{u}\|_0 \leq 2$, then setting $\bl{v} := \bl{u}$ yields a trivial bound satisfying the norm constraint. Suppose instead that $\|\bl{u}\|_0 = d > 4$, and let $\mathcal{I}_{\bl{u}}$ be the index set of the non-zero entries of $\bl{u}$. Then there exists a vector $\bs{\eta} \in \mathbb{R}^{n}$ such that
\begin{equation}\label{eq:restrict}
    \bs{\eta}^\top (\tilde{\bs{\phi}} \odot \tilde{\bs{\phi}} \odot \bs{\omega}) = 0,~\bs{\eta}^\top \bs{\omega}= 0,~ \bs{\eta}^\top \tilde{\bs{\phi}}=0, ~ \bs{\eta}^\top \bl{1}_n =0
\end{equation}
and $\eta_i = 0$ for $i \notin \mathcal{I}_{\bl{u}}$. Such an $\bs{\eta}$ exists because these restrictions define a system of at most $n-1$ independent linear equations in $n$ variables. To see this, note that there are four linear equations in \eqref{eq:restrict}, and there are $n-d$ linear equations that enforce $\eta_i = 0$ for $i \notin \mathcal{I}_{\bl{u}}$. As $d > 4$, the number of linear equations is $n - d + 4 < n$. If $\bs{\omega}, \tilde{\bs{\phi}}$ and $\bl{1}_n$ are linearly independent, then the number of independent linear equations is exactly $n-d+4$. If they are not, then the effective number of independent linear equations is less than $n-d+4$. 
    
An $\bs{\eta}$ satisfying the restrictions above must also have at least one negative entry and one positive entry among its non-zero entries. This is due to the fact that $\bs{\omega}, \tilde{\bs{\phi}}$ and $\bl{1}_n$ are all vectors with strictly positive entries, so \eqref{eq:restrict} implies that $\bs{\eta}$ cannot lie in either the positive orthant or the negative orthant. Consequently, there exists an $\epsilon > 0$ such that $\| \bs{\alpha} + \epsilon \bs{\eta} \|_0 = d-1$ and the entries of $\bs{\alpha} + \epsilon \bs{\eta}$ are all non-negative. Specifically, if
\begin{equation*}
	\epsilon = \min_{\{i : \eta_i < 0\}} |\alpha_i / \eta_i|,
\end{equation*}
then no entry of $\bs{\alpha} + \epsilon \bs{\eta}$ will fall below zero, and $\bs{\alpha} + \epsilon \bs{\eta}$ will have one additional entry equal to zero (the entry corresponding to the minimum above) relative to $\bs{\alpha}$. 

Setting $\bl{v} := (\bs{\alpha} + \epsilon \bs{\eta})^{1/2}$ produces a unit vector with $L_0$ norm equal to $d-1$ such that $k(\bl{u}) = k(\bl{v})$. The fact that $\bl{v}$ is a unit vector follows from
\begin{equation*}
\begin{aligned}
    \bl{v}^\top \bl{v} &= \sum_{i=1}^n |\alpha_i + \epsilon \eta_i| \\
                       &= \sum_{i=1}^n (\alpha_i + \epsilon \eta_i) \\
                       &= \bl{1}^\top \bs{\alpha} + \epsilon \bl{1}^\top \bs{\eta} \\
                       &= 1,
\end{aligned}
\end{equation*}
where we used the last linear equation in \eqref{eq:restrict} to obtain the last equality. The fact that $\bl{v}$ has $L_0$ norm equal to $d-1$ follows from the fact that $\|\bs{\alpha} + \epsilon \bs{\eta}\|_0 = d-1$. Finally, $k(\bl{u}) = k(\bl{v})$ because the first three linear equations in \eqref{eq:restrict} ensure that none of the terms in \eqref{eq:simplex} change when $\bs{\alpha} + \epsilon \bs{\eta}$ is substituted for $\bs{\alpha}$. Each of the steps above can be repeated until one begins the process with $d = 5$ and obtains a valid $\bl{v}$ with $L_0$ norm equal to $4$. This demonstrates that
\begin{equation*}
    \sup_{\bl{u} \in \mathcal{V}^n}~k(\bl{u}) = \sup_{\bl{u} \in \mathcal{V}^n, \|\bl{u}\|_0 \leq 4}~k(\bl{u}).
\end{equation*}
It remains to address the case that $2 < \|\bl{u}\|_0 \leq 4$.
    
If $\|\bl{u}\|_0 = 4$, then there exists a non-zero vector $\bs{\eta} \in \mathbb{R}^n$ such that
\begin{equation}\label{eq:restrict2}
    \bs{\eta}^\top (\tilde{\bs{\phi}} \odot \tilde{\bs{\phi}} \odot \bs{\omega}) > 0,~\bs{\eta}^\top \bs{\omega}= 0,~ \bs{\eta}^\top \tilde{\bs{\phi}}=0, ~ \bs{\eta}^\top \bl{1}_n =0
\end{equation}
and $\eta_i = 0$ for each $i \notin \mathcal{I}_{\bl{u}}$. Reasoning as before, such an $\bs{\eta}$ exists because there is at least a $1$-dimensional subspace of $\mathbb{R}^n$ where the stated equalities are satisfied. If we choose an arbitrary vector in this subspace, it will either have positive or negative dot product with $(\tilde{\bs{\phi}} \odot \tilde{\bs{\phi}} \odot \bs{\omega})$. If the dot product is positive, we can choose $\bs{\eta}$ to be this vector. If it is negative then we can choose $\bs{\eta}$ to be the negation of this vector. Having found such an $\bs{\eta}$, we may again choose
\begin{equation*}
	\epsilon = \min_{\{i : \eta_i < 0\}} |\alpha_i / \eta_i|,
\end{equation*}
and note, as before, that $(\bs{\alpha} + \epsilon \bs{\eta})^{1/2}$ is a unit vector with three non-zero, positive entries. Here, setting $\bl{v} := (\bs{\alpha} + \epsilon \bs{\eta})^{1/2}$ yields $k(\bl{u}) < k(\bl{v})$ due to the inequality in \eqref{eq:restrict2}. This implies that it suffices to consider the $\|\bl{u}\|_0 = 3$ case.

If $\|\bl{u}\|_0 = 3$, then there exists a non-zero vector $\bs{\eta} \in \mathbb{R}^n$ such that
\begin{equation}\label{eq:restrict3}
    \bs{\eta}^\top \left\{(\tilde{\bs{\phi}} \odot \tilde{\bs{\phi}} \odot \bs{\omega}) - (\bs{\alpha}^\top \tilde{\bs{\phi}})^2 \bs{\omega} \right\} > 0,~ \bs{\eta}^\top \tilde{\bs{\phi}}=0, ~ \bs{\eta}^\top \bl{1}_n =0
\end{equation}
and $\eta_i = 0$ for each $i \notin \mathcal{I}_{\bl{u}}$. Such an $\bs{\eta}$ exists because there is at least a $1$-dimensional subspace of $\mathbb{R}^n$ where the stated equalities are satisfied. If we choose an arbitrary vector in this subspace, it will either have positive or negative dot product with the vector $(\tilde{\bs{\phi}} \odot \tilde{\bs{\phi}} \odot \bs{\omega}) - (\bs{\alpha}^\top \tilde{\bs{\phi}})^2 \bs{\omega}$. If the dot product is positive, we can choose $\bs{\eta}$ to be this vector. If it is negative then we can choose $\bs{\eta}$ to be the negation of this vector. Having found such an $\bs{\eta}$, we may again choose
\begin{equation*}
	\epsilon = \min_{\{i : \eta_i < 0\}} |\alpha_i / \eta_i|,
\end{equation*}
and note as before that $(\bs{\alpha} + \epsilon \bs{\eta})^{1/2}$ is a unit vector with two non-zero, positive entries. Setting $\bl{v} := (\bs{\alpha} + \epsilon \bs{\eta})^{1/2}$ yields $k(\bl{u}) < k(\bl{v})$ due to the inequality in \eqref{eq:restrict3}.

Hence, for any $\bl{u} \in \mathcal{V}^n$ there exists a $\bl{v} \in \mathcal{V}^n$ such that $\|\bl{v}\|_0 \leq 2$ and $k(\bl{u}) < k(\bl{v})$, so
\begin{equation*}
    \sup_{\bl{u} \in \mathcal{V}^n}~k(\bl{u}) = \sup_{\bl{u} \in \mathcal{V}^n, \|\bl{u}\|_0 \leq 2}~k(\bl{u}), 
\end{equation*}
which completes the proof. 
\end{proof}

Now we prove Theorem \ref{thm:main}:
\begin{proof}[Proof of Theorem \ref{thm:main}]
For the entirety of the proof we assume that the diagonal entries of $\bs{\Omega}, \tilde{\bs{\Omega}} \in \mathcal{D}_+^n$ are distinct. The result may be generalized by a continuity argument to the case of non-distinct diagonal entries.

($\impliedby$) We will prove the necessity of \eqref{eq:main} by proving that if it does not hold for some $\tilde{\bs{\Omega}}$, then $\tilde{\bs{\Omega}} \notin \mathcal{C}^1_{\bs{\Omega}}$. Suppose that the diagonal entries of $\tilde{\bs{\Omega}}$ do not satisfy
\begin{equation*}
    1 \leq \frac{\tilde{\omega}_i}{\tilde{\omega}_j} \leq 2\frac{\omega_i}{\omega_j} - 1
\end{equation*}
for all $\omega_i > \omega_j$, $i \in \{1, \dots, n\}, j \in \{1, \dots, n\}$. Then there exists at least one pair of indices, $(i,j)$, for which $\omega_i > \omega_j$, and either
\begin{equation}\label{eq:opt1}
    \frac{\tilde{\omega}_i}{\tilde{\omega}_j} > 2\frac{\omega_i}{\omega_j} - 1
\end{equation}
or
\begin{equation}\label{eq:opt2}
    \frac{\tilde{\omega}_i}{\tilde{\omega}_j} < 1.
\end{equation}
First, suppose that \eqref{eq:opt1} holds for a pair of indices $(i,j)$, and let $\bl{u}$ be a unit vector with entries equal to zero everywhere except at the indices $i$ and $j$. Then we may write $t := u_i^2$ and $1-t := u_j^2$, and after expanding and collecting terms, find that
\begin{equation*}
\begin{aligned}
    \mathrm{c}_{\bl{u}}(\tilde{\boldsymbol{\Phi}}, \tilde{\boldsymbol{\Phi}} \bs{\Omega}) + \mathrm{e}_{\bl{u}}(\tilde{\boldsymbol{\Phi}}) \mathrm{c}_{\bl{u}}(\tilde{\boldsymbol{\Phi}}, \bs{\Omega}) &= t(1-t)(\tilde{\phi}_i - \tilde{\phi}_j)^2 (\omega_i - \omega_j)\left[ t - \frac{2 \tilde{\phi}_j \omega_j - (\tilde{\phi}_i + \tilde{\phi}_j)\omega_i}{(\tilde{\phi}_i - \tilde{\phi}_j)(\omega_i - \omega_j)}\right],
\end{aligned}
\end{equation*}
where $\tilde{\bs{\Phi}} = \tilde{\bs{\Omega}}^{-1}$. This is a third-degree polynomial in $t$, with roots at $r_1=0$, $r_2=1$, and 
\begin{equation}\label{eq:thirdroot}
    r_3 = \frac{2 \tilde{\phi}_j \omega_j - (\tilde{\phi}_i + \tilde{\phi}_j)\omega_i}{(\tilde{\phi}_i - \tilde{\phi}_j)(\omega_i - \omega_j)},
\end{equation}
respectively. Since we assumed \eqref{eq:opt1}, it holds that $\tilde{\phi}_j > \tilde{\phi}_i$. Therefore, the denominator above is negative, and the sign of $r_3$ depends on whether the numerator above is positive or negative. 
    
Supposing first that the numerator is negative implies that $r_3$ must be positive. Additionally, $r_3$ must be strictly less than 1 because \eqref{eq:opt1} implies
\begin{equation*}
    2 \tilde{\phi}_i \omega_i < (\tilde{\phi}_i + \tilde{\phi}_j) \omega_j .
\end{equation*}
Subtracting $\tilde{\phi}_i \omega_i + 2 \tilde{\phi}_j \omega_j$ from both sides, and adding $\tilde{\phi}_j \omega_i$ to both sides yields
\begin{equation*}
    \tilde{\phi}_i \omega_i + \tilde{\phi}_j \omega_i - 2\tilde{\phi}_j \omega_j < \tilde{\phi}_j \omega_i - \tilde{\phi}_i \omega_i - \tilde{\phi}_j \omega_j + \tilde{\phi}_i \omega_j,
\end{equation*}
which implies that
\begin{equation*}
    (\tilde{\phi}_i + \tilde{\phi}_j)\omega_i - 2 \tilde{\phi}_j\omega_j < (\tilde{\phi}_j - \tilde{\phi}_i)(\omega_i - \omega_j),
\end{equation*}
so $0 < r_3 < 1$. Since there are then three real roots of the cubic equation in the interval $t \in [0, 1]$, we conclude that there exists a $t \in (0, 1)$ so that the cubic takes on a positive value. Therefore, there exists a $\bl{u}$ such that $H_{\bl{u}}(\tilde{\bs{\Omega}}, \bs{\Omega}) > H_{\bl{u}}(\bl{I}_n, \bs{\Omega})$.

If instead $r_3$ is negative, we conclude that the cubic polynomial is positive in the entire interval $(0, 1)$. This is because the sign of the leading coefficient of the cubic polynomial is negative, so the polynomial must take on positive values on $(-\infty, r_3)$, negative values on $(r_3, 0)$, positive values on $(0, 1)$, and negative values on $(1, \infty)$. Choosing any $t$ in $(0, 1)$, we conclude that there exists a $\bl{u}$ such that $H_{\bl{u}}(\tilde{\bs{\Omega}}, \bs{\Omega}) > H_{\bl{u}}(\bl{I}_n, \bs{\Omega})$. Since we came to this conclusion both when $r_3$ was assumed positive and when it was assumed negative, \eqref{eq:opt1} implies that there exists a $\bl{u}$ for which $H_{\bl{u}}(\tilde{\bs{\Omega}}, \bs{\Omega}) > H_{\bl{u}}(\bl{I}_n, \bs{\Omega})$. Hence \eqref{eq:opt1} implies $\tilde{\bs{\Omega}} \notin \mathcal{C}_{\bs{\Omega}}$.

If \eqref{eq:opt2} holds for a pair of indices $(i,j)$, then let $\bl{u}$ be a unit vector with entries equal to zero everywhere except at the indices $i$ and $j$ and set $t := u_i^2$ and $1-t := u_j^2$ as before. Here, since \eqref{eq:opt2} implies $\tilde{\phi}_j < \tilde{\phi}_i$, we see that the denominator in \eqref{eq:thirdroot} is positive. The numerator, on the other hand, must be negative, since
\begin{equation*}
    2 \tilde{\phi}_j \omega_j < (\tilde{\phi}_i + \tilde{\phi}_j) \omega_j < (\tilde{\phi}_i + \tilde{\phi}_j) \omega_i.
\end{equation*}
Hence, when \eqref{eq:opt2} holds, $r_3 < 0$. As before, this leads to the conclusion that the cubic polynomial is positive on the entire interval $(0, 1)$, so we may find a $\bl{u}$ such that $H_{\bl{u}}(\tilde{\bs{\Omega}}, \bs{\Omega}) > H_{\bl{u}}(\bl{I}_n, \bs{\Omega})$. Hence \eqref{eq:opt2} implies $\tilde{\bs{\Omega}} \notin \mathcal{C}_{\bs{\Omega}}$.

($\implies$) Assume that the diagonal entries of $\tilde{\bs{\Omega}}$ satisfy
\begin{equation*}
    1 \leq \frac{\tilde{\omega}_i}{\tilde{\omega}_j} \leq 2\frac{\omega_i}{\omega_j} - 1
\end{equation*}
for all $\omega_i > \omega_j$, $i \in \{1, \dots, n\}, j \in \{1, \dots, n\}$. Note that $\tilde{\bs{\Omega}} \in \mathcal{C}^1_{\bs{\Omega}}$ if and only if
\begin{equation*}
    \underset{\bl{u} \in \mathcal{V}^n}{\sup}~ H_{\bl{u}}(\tilde{\bs{\Omega}}, \bs{\Omega}) - H_{\bl{u}}(\tilde{\bs{\Omega}}, \bl{I}_n) \leq 0 .
\end{equation*}
By rearrangement of terms, this can be expressed alternately as
\begin{equation*}
    \underset{\bl{u} \in \mathcal{V}^n}{\sup} ~\mathrm{c}_{\bl{u}}(\tilde{\boldsymbol{\Phi}}, \tilde{\boldsymbol{\Phi}} \bs{\Omega}) + \mathrm{e}_{\bl{u}}(\tilde{\boldsymbol{\Phi}}) \mathrm{c}_{\bl{u}}(\tilde{\boldsymbol{\Phi}}, \bs{\Omega}) \leq 0,
\end{equation*}
or as
\begin{equation}\label{eq:kbd}
    \underset{\bl{u} \in \mathcal{V}^n}{\sup} ~ k(\bl{u}) \leq 0,
\end{equation}
where $k$ is defined as in Lemma \ref{lem:zeros}, and as before $\tilde{\bs{\Phi}} = \tilde{\bs{\Omega}}^{-1}$. Hence, it suffices to show that \eqref{eq:kbd} holds to prove that $\tilde{\bs{\Omega}} \in \mathcal{C}^1_{\bs{\Omega}}$. Applying Lemma \ref{lem:zeros}, the maxima of $k$ are attained for $\|\bl{u}\|_0 \leq 2$. Therefore,
\begin{equation}\label{eq:polys}
    \underset{\bl{u} \in \mathcal{V}^n}{\sup} ~ k(\bl{u}) = \underset{i \neq j}{\max}~\underset{0 \leq t \leq 1}{\sup}~t(1-t)(\tilde{\phi}_i - \tilde{\phi}_j)^2 (\omega_i - \omega_j)\left[ t - \frac{2 \tilde{\phi}_j \omega_j - (\tilde{\phi}_i + \tilde{\phi}_j)\omega_i}{(\tilde{\phi}_i - \tilde{\phi}_j)(\omega_i - \omega_j)}\right]
\end{equation}
If the diagonal entries of $\tilde{\bs{\Omega}}$ satisfy
\begin{equation*}
        1 \leq \frac{\tilde{\omega}_i}{\tilde{\omega}_j} \leq 2\frac{\omega_i}{\omega_j} - 1
\end{equation*}
for all $\omega_i > \omega_j$, $i \in \{1, \dots, n\}, j \in \{1, \dots, n\}$, then none of the above polynomials can attain a positive value. To see this, re-write the condition above in terms of $\tilde{\phi}_i, \tilde{\phi}_j$ and conclude from the first inequality that $\tilde{\phi}_i < \tilde{\phi}_j$. From the second inequality, derive
\begin{equation*}
\begin{aligned}
    \frac{\tilde{\phi}_j}{\tilde{\phi}_i} \leq 2 \frac{\omega_i}{\omega_j} - 1 &\implies \tilde{\phi}_j \omega_j \leq 2 \phi_i \omega_i - \phi_i \omega_j \\
    &\implies \tilde{\phi}_j \omega_i - \tilde{\phi}_j \omega_j - \tilde{\phi}_i \omega_i + \tilde{\phi}_i \omega_j \leq \tilde{\phi}_i \omega_i + \tilde{\phi}_j \omega_i - 2\tilde{\phi}_j \omega_j \\
    &\implies (\tilde{\phi}_j - \tilde{\phi}_i)(\omega_i - \omega_j) \leq (\tilde{\phi}_i + \tilde{\phi}_j) \omega_i - 2 \tilde{\phi}_j \omega_j
\end{aligned}
\end{equation*}
Since $\tilde{\phi}_i < \tilde{\phi}_j$, we conclude that
\begin{equation*}
    \frac{2 \tilde{\phi}_j \omega_j - (\tilde{\phi}_i + \tilde{\phi}_j) \omega_i}{(\tilde{\phi}_i - \tilde{\phi}_j)(\omega_i - \omega_j)} \geq 1,
\end{equation*}
so the third root of all polynomials in \eqref{eq:polys} is greater than or equal to 1. Denote this root by $r_3$ as before, and first assume $r_3 > 1$. In this case, since the leading coefficient in each of the cubic polynomials in \eqref{eq:polys} is negative, each polynomial must be positive on $(-\infty, 0)$, negative on $(0, 1)$, positive on $(1, r_3)$, and negative on $(r_3, \infty)$. If $r_3=1$, then there is a repeated root at $1$, and the polynomial is non-positive on $(0, \infty)$. Thus, all polynomials in \eqref{eq:polys} are non-positive on the interval $[0, 1]$.
\end{proof}

\subsection{{Proof of Corollary \ref{cor:main1}}}

\begin{proof}

For the entirety of the proof we assume that the diagonal entries of $\bs{\Omega}, \tilde{\bs{\Omega}} \in \mathcal{D}_+^n$ are distinct. The result may be generalized by a continuity argument to the case of non-distinct diagonal entries.

($\implies$) Assume that the diagonal entries of $\tilde{\bs{\Omega}}$ satisfy
\begin{equation*}
    1 \leq \frac{\tilde{\omega}_i}{\tilde{\omega}_j} \leq 2\frac{\omega_i}{\omega_j} - 1
\end{equation*}
for all $\omega_i > \omega_j$, $i \in \{1, \dots, n\}, j \in \{1, \dots, n\}$. Letting $\tilde{\bs{\Phi}} = \tilde{\bs{\Omega}}^{-1}$ and letting $\mathcal{V}^{n,p}$ denote the set of all $n\times p$ orthogonal matrices, see that 
\begin{equation}\label{eq:sups}
    \underset{\bl{U} \in \mathcal{V}^{n,p}}{\sup}~ \frac{|H_{\bl{U}}(\tilde{\bs{\Omega}}, \bs{\Omega})|}{|H_{\bl{U}}(\bl{I}_n, \bs{\Omega})|} = \underset{\bl{U} \in \mathcal{V}^{n,p}}{\sup}~\frac{|\bl{U}^\top \tilde{\bs{\Phi}}^2 \bs{\Omega}\bl{U}|}{|\bl{U}^\top\bs{\Omega}\bl{U}| |\bl{U}^\top \tilde{\bs{\Phi}}\bl{U}|^2}
\end{equation}
Now set $\bl{Z} := \bs{\Omega}^{1/2} \bl{U} (\bl{U}^\top \bs{\Omega} \bl{U})^{-1/2}$, where we have used the inverse of the symmetric matrix square root, so that $\bl{Z} \in \mathcal{V}^{n,p}$. Reparameterizing in terms of $\bl{Z}$, we have
\begin{equation*}
    \begin{aligned}
        \frac{|\bl{U}^\top \tilde{\bs{\Phi}}^2 \bs{\Omega}\bl{U}|}{|\bl{U}^\top\bs{\Omega}\bl{U}|} = |\bl{Z}^\top \tilde{\bs{\Phi}}^2 \bl{Z}|~\text{ and }~ \frac{|\bl{U}^\top \bl{U}|^2}{|\bl{U}^\top \tilde{\bs{\Phi}}\bl{U}|^2} = \frac{|\bl{Z}^\top \bs{\Omega}^{-1} \bl{Z}|^2}{|\bl{Z}^\top \tilde{\bs{\Phi}} \bs{\Omega}^{-1} \bl{Z}|^2}
    \end{aligned}
\end{equation*}
so \eqref{eq:sups} becomes
\begin{equation*}
    \underset{\bl{Z} \in \mathcal{V}^{n,p}}{\sup}~ \frac{|\bl{Z}^\top \tilde{\bs{\Phi}}^2 \bl{Z}||\bl{Z}^\top \bs{\Omega}^{-1} \bl{Z}|^2}{|\bl{Z}^\top \tilde{\bs{\Phi}} \bs{\Omega}^{-1}\bl{Z}|^2} .
\end{equation*}
Since the determinant is invariant to multiplication of its matrix argument by any square orthogonal matrix, we may seek a convenient orthogonal basis in $\mathbb{R}^p$ and write the above in terms of this basis. So let $\bl{Q}$ be the $p \times p$ orthogonal matrix whose columns are the eigenvectors of $\bl{Z}^\top \tilde{\bs{\Phi}} \bs{\Omega}^{-1}\bl{Z}$, and set $\bl{V} := \bl{Z} \bl{Q}$. Then
\begin{equation*}
    \begin{aligned}
        \frac{|\bl{Z}^\top \tilde{\bs{\Phi}}^2 \bl{Z}||\bl{Z}^\top \bs{\Omega}^{-1} \bl{Z}|^2}{|\bl{Z}^\top \tilde{\bs{\Phi}} \bs{\Omega}^{-1}\bl{Z}|^2} &= \frac{|\bl{Q}^\top \bl{Z}^\top \tilde{\bs{\Phi}}^2 \bl{Z} \bl{Q}||\bl{Q}^\top \bl{Z}^\top \bs{\Omega}^{-1} \bl{Z} \bl{Q}|^2}{|\bl{Q}^\top \bl{Z}^\top \tilde{\bs{\Phi}} \bs{\Omega}^{-1}\bl{Z}\bl{Q}|^2} \\
        &= \frac{|\bl{V}^\top \tilde{\bs{\Phi}}^2 \bl{V}||\bl{V}^\top \bs{\Omega}^{-1} \bl{V}|^2}{|\bl{V}^\top \tilde{\bs{\Phi}} \bs{\Omega}^{-1}\bl{V}|^2} \\
        &= \frac{|\bl{V}^\top \tilde{\bs{\Phi}}^2 \bl{V}||\bl{V}^\top \bs{\Omega}^{-1} \bl{V}|^2}{\prod_{j=1}^p (\bl{v}_j^\top \tilde{\bs{\Phi}} \bs{\Omega}^{-1} \bl{v}_j )^2}.
    \end{aligned}
\end{equation*}
In the last line, $\bl{v}_j$ denotes the $j$th column of $\bl{V}$. The product in the denominator of the last line above is therefore taken over the diagonal elements of $\bl{V}^\top \tilde{\bs{\Phi}} \bs{\Omega}^{-1}\bl{V}$, which, by construction of $\bl{Q}$, are also the eigenvalues of $\bl{Z}^\top \tilde{\bs{\Phi}} \bs{\Omega}^{-1}\bl{Z}$. Hadamard's inequality \citep{marshall_inequalities_2011} states that the determinant of a symmetric positive definite matrix is less than or equal to the product of its diagonal entries. Apply this inequality to the two determinants in the numerator of the last line above, and find that
\begin{equation*}
    \begin{aligned}
        \frac{|\bl{Z}^\top \tilde{\bs{\Phi}}^2 \bl{Z}||\bl{Z}^\top \bs{\Omega}^{-1} \bl{Z}|^2}{|\bl{Z}^\top \tilde{\bs{\Phi}} \bs{\Omega}^{-1}\bl{Z}|^2} &= \frac{|\bl{V}^\top \tilde{\bs{\Phi}}^2 \bl{V}||\bl{V}^\top \bs{\Omega}^{-1} \bl{V}|^2}{\prod_{j=1}^p (\bl{v}_j^\top \tilde{\bs{\Phi}} \bs{\Omega}^{-1} \bl{v}_j )^2} \\
        &\leq \frac{\left\{ \prod_{j=1}^p \bl{v}_j^\top \tilde{\bs{\Phi}}^2 \bl{v}_j  \right\}\left\{ \prod_{j=1}^p (\bl{v}_j^\top \bs{\Omega}^{-1} \bl{v}_j )^2 \right\}}{\prod_{j=1}^p (\bl{v}_j^\top \tilde{\bs{\Phi}} \bs{\Omega}^{-1} \bl{v}_j )^2}.
    \end{aligned}
\end{equation*}
Returning to our notation from the main text, this upper bound may be written as a product of ratios of $\mathrm{e}_{\bl{v}_j}$ functions as follows
\begin{equation*}
    \frac{\left\{ \prod_{j=1}^p \bl{v}_j^\top \tilde{\bs{\Phi}}^2 \bl{v}_j  \right\}\left\{ \prod_{j=1}^p (\bl{v}_j^\top \bs{\Omega}^{-1} \bl{v}_j )^2 \right\}}{\prod_{j=1}^p (\bl{v}_j^\top \tilde{\bs{\Phi}} \bs{\Omega}^{-1} \bl{v}_j )^2} = \prod_{j=1}^p \frac{\mathrm{e}_{\bl{v}_j}(\tilde{\bs{\Phi}}^2) \mathrm{e}_{\bl{v}_j}(\bs{\Omega}^{-1})^2}{\mathrm{e}_{\bl{v}_j}(\tilde{\bs{\Phi}} \bs{\Omega}^{-1})^2}.
\end{equation*}
So we conclude that
\begin{equation*}
\begin{aligned}
    \underset{\bl{Z} \in \mathcal{V}^{n,p}}{\sup}~\frac{|\bl{Z}^\top \tilde{\bs{\Phi}}^2 \bl{Z}||\bl{Z}^\top \bs{\Omega}^{-1} \bl{Z}|^2}{|\bl{Z}^\top \tilde{\bs{\Phi}} \bs{\Omega}^{-1}\bl{Z}|^2} &\leq \underset{\bl{V} \in \mathcal{V}^{n,p}}{\sup}~ \prod_{j=1}^p \frac{\mathrm{e}_{\bl{v}_j}(\tilde{\bs{\Phi}}^2) \mathrm{e}_{\bl{v}_j}(\bs{\Omega}^{-1})^2}{\mathrm{e}_{\bl{v}_j}(\tilde{\bs{\Phi}} \bs{\Omega}^{-1})^2} \\
    &\leq \underset{\bl{v}_j \in \mathcal{V}^n,~j=1,\dots,p}{\sup}~ \prod_{j=1}^p \frac{\mathrm{e}_{\bl{v}_j}(\tilde{\bs{\Phi}}^2) \mathrm{e}_{\bl{v}_j}(\bs{\Omega}^{-1})^2}{\mathrm{e}_{\bl{v}_j}(\tilde{\bs{\Phi}} \bs{\Omega}^{-1})^2} \\
    &= \prod_{j=1}^p \underset{\bl{v}_j \in \mathcal{V}^n}{\sup}~ \frac{\mathrm{e}_{\bl{v}_j}(\tilde{\bs{\Phi}}^2) \mathrm{e}_{\bl{v}_j}(\bs{\Omega}^{-1})^2}{\mathrm{e}_{\bl{v}_j}(\tilde{\bs{\Phi}} \bs{\Omega}^{-1})^2},
\end{aligned}
\end{equation*}
where the second inequality is due to the fact that 
\begin{equation*}
    \{ \bl{V} \in \mathbb{R}^{n \times p} : \bl{V}^\top \bl{V} = \bl{I}_p\} \subseteq \{ \bl{V} \in \mathbb{R}^{n \times p} : \mathrm{diag}(\bl{V}^\top \bl{V}) = \bl{1}_p\}.
\end{equation*}
Finally, by the reparameterization $\bl{w}_j = \bs{\Omega}^{-1/2} \bl{v}_j / \|\bs{\Omega}^{-1/2} \bl{v}_j\| = (\bs{\omega}^{-1/2} \odot \bl{v}_j) / \sqrt{\mathrm{e}_{\bl{v}_j}(\bs{\Omega}^{-1})}$ see that
\begin{equation*}
    \underset{\bl{v}_j \in \mathcal{V}^n}{\sup}~\frac{\mathrm{e}_{\bl{v}_j}(\tilde{\bs{\Phi}}^2) \mathrm{e}_{\bl{v}_j}(\bs{\Omega}^{-1})^2}{\mathrm{e}_{\bl{v}_j}(\tilde{\bs{\Phi}} \bs{\Omega}^{-1})^2} = \underset{\bl{w}_j \in \mathcal{V}^n}{\sup}~ \frac{\mathrm{e}_{\bl{w}_j}(\tilde{\bs{\Phi}}^2 \bs{\Omega})}{\mathrm{e}_{\bl{w}_j}(\bs{\Omega}) \mathrm{e}_{\bl{w}_j}(\tilde{\bs{\Phi}})^2} = \underset{\bl{w}_j \in \mathcal{V}^n}{\sup}~\frac{H_{\bl{w}_j}(\tilde{\bs{\Omega}}, \bs{\Omega})}{H_{\bl{w}_j}(\bl{I}_n, \bs{\Omega})}.
\end{equation*}
By Theorem \ref{thm:main}, the right-most term is less than or equal to 1 for all unit vectors $\bl{w}_j$. Hence, 
\begin{equation*}
    \underset{\bl{U} \in \mathcal{V}^{n,p}}{\sup}~ \frac{|H_{\bl{U}}(\tilde{\bs{\Omega}}, \bs{\Omega})|}{|H_{\bl{U}}(\bl{I}_n, \bs{\Omega})|} \leq \prod_{j=1}^p \underset{\bl{w}_j \in \mathcal{V}^n}{\sup}~ \frac{H_{\bl{w}_j}(\tilde{\bs{\Omega}}, \bs{\Omega})}{H_{\bl{w}_j}(\bl{I}_n, \bs{\Omega})} \leq 1,
\end{equation*}
which demonstrates that $\tilde{\bs{\Omega}} \in \mathcal{C}^p_{\bs{\Omega}}$.

($\impliedby$) As in the proof of Theorem \ref{thm:main}, suppose that the diagonal entries of $\tilde{\bs{\Omega}}$ do not satisfy
\begin{equation*}
    1 \leq \frac{\tilde{\omega}_i}{\tilde{\omega}_j} \leq 2\frac{\omega_i}{\omega_j} - 1
\end{equation*}
for all $\omega_i > \omega_j$, $i \in \{1, \dots, n\}, j \in \{1, \dots, n\}$. Then there exists at least one pair of indices, $(i,j)$, for which $\omega_i > \omega_j$, and either
\begin{equation*}
    \frac{\tilde{\omega}_i}{\tilde{\omega}_j} > 2\frac{\omega_i}{\omega_j} - 1
\end{equation*}
or
\begin{equation*}
    \frac{\tilde{\omega}_i}{\tilde{\omega}_j} < 1.
\end{equation*}
In the proof of Theorem \ref{thm:main}, we showed that either of the conditions above imply that it is possible to find a unit vector $\bl{u}$ with entries equal to zero everywhere except at indices $i$ and $j$ such that 
\begin{equation*}
    \frac{H_{\bl{u}}(\tilde{\bs{\Omega}}, \bs{\Omega})}{H_{\bl{u}}(\bl{I}_n, \bs{\Omega})} > 1.
\end{equation*}
Without loss of generality, suppose that $(i,j) = (1,2)$. Then the non-zero entries of $\bl{u}$ occur at indices $1$ and $2$. Again without loss of generality, let $\bl{U} \in \mathbb{R}^{n \times p}$ be a matrix which has its first column equal to $\bl{u}$, and all other $p-1$ columns equal to the $n$-dimensional standard basis vectors $\bs{e}_3, \dots, \bs{e}_{p+1}$. Then $\bl{U} \in \mathcal{V}^{n,p}$ and both $H_{\bl{U}}(\tilde{\bs{\Omega}}, \bs{\Omega})$ and $H_{\bl{U}}(\bl{I}_n, \bs{\Omega})$ are diagonal matrices. Thus,
\begin{equation*}
    \frac{|H_{\bl{U}}(\tilde{\bs{\Omega}}, \bs{\Omega})|}{|H_{\bl{U}}(\bl{I}_n, \bs{\Omega})|} = \frac{H_{\bl{u}}(\tilde{\bs{\Omega}}, \bs{\Omega}) \prod_{i=3}^{p+1} \tilde{\phi}_i^2 \omega_i }{H_{\bl{u}}(\bl{I}_n, \bs{\Omega}) (\prod_{i=3}^{p+1} \omega_i) (\prod_{i=3}^{p+1} \tilde{\phi}_i)^2} = \frac{H_{\bl{u}}(\tilde{\bs{\Omega}}, \bs{\Omega})}{H_{\bl{u}}(\bl{I}_n, \bs{\Omega})} > 1.
\end{equation*}
Since we were able to construct an orthogonal $\bl{U}$ for which $|H_{\bl{U}}(\tilde{\bs{\Omega}}, \bs{\Omega})| > |H_{\bl{U}}(\bl{I}_n, \bs{\Omega})|$, we conclude that $\tilde{\bs{\Omega}} \notin \mathcal{C}^p_{\bs{\Omega}}$.
\end{proof}

\subsection{{Proof of Corollary \ref{cor:main2}}}

\begin{proof}

For the entirety of the proof we assume that the diagonal entries of $\bs{\Omega}, \tilde{\bs{\Omega}} \in \mathcal{D}_+^n$ are distinct. The result may be generalized by a continuity argument to the case of non-distinct diagonal entries.

($\implies$) Assume that the diagonal entries of $\tilde{\bs{\Omega}}$ satisfy
\begin{equation*}
    1 \leq \frac{\tilde{\omega}_i}{\tilde{\omega}_j} \leq 2\frac{\omega_i}{\omega_j} - 1
\end{equation*}
for all $\omega_i > \omega_j$, $i \in \{1, \dots, n\}, j \in \{1, \dots, n\}$. Letting $\tilde{\bs{\Phi}} = \tilde{\bs{\Omega}}^{-1}$ and letting $\mathcal{V}^{n,p}$ denote the set of all $n\times p$ orthogonal matrices, we want to show that
\begin{equation}\label{eq:sups2}
    \underset{\bl{U} \in \mathcal{V}^{n,p}}{\sup}~\tr \left\{(\bl{U}^\top \tilde{\bs{\Phi}} \bl{U})^{-1} \bl{U}^\top \tilde{\bs{\Phi}} \bs{\Omega} \tilde{\bs{\Phi}} \bl{U} (\bl{U}^\top \tilde{\bs{\Phi}} \bl{U})^{-1} - \bl{U}^\top \bs{\Omega} \bl{U} \right\} \leq 0.
\end{equation}
By the cyclic property of the trace, we can pre- and post-multiply the matrices above by any $p \times p$ orthogonal matrix without changing the value of the trace. Let $\bl{Q} \in \mathcal{V}^{p,p}$ be the matrix whose columns are the eigenvectors of $\bl{U}^\top \tilde{\bs{\Phi}} \bl{U}$, and let $\bl{Z} = \bl{U}\bl{Q}$. Then
\begin{equation*}
    \tr \left\{(\bl{U}^\top \tilde{\bs{\Phi}} \bl{U})^{-1} \bl{U}^\top \tilde{\bs{\Phi}} \bs{\Omega} \tilde{\bs{\Phi}} \bl{U} (\bl{U}^\top \tilde{\bs{\Phi}} \bl{U})^{-1}\right\} = \tr \left\{(\bl{Z}^\top \tilde{\bs{\Phi}} \bl{Z})^{-1} \bl{Z}^\top \tilde{\bs{\Phi}} \bs{\Omega} \tilde{\bs{\Phi}} \bl{Z} (\bl{Z}^\top \tilde{\bs{\Phi}} \bl{Z})^{-1} \right\} 
\end{equation*}
and
\begin{equation*}
    \tr\left\{ \bl{U}^\top \bs{\Omega} \bl{U} \right\} = \tr\left\{ \bl{Z}^\top \bs{\Omega} \bl{Z} \right\}.
\end{equation*}
Moreover, $\bl{Z}^\top \tilde{\bs{\Phi}} \bl{Z}$ is a diagonal matrix. So
\begin{equation*}
\begin{aligned}
    \tr \left\{(\bl{Z}^\top \tilde{\bs{\Phi}} \bl{Z})^{-1} \bl{Z}^\top \tilde{\bs{\Phi}} \bs{\Omega} \tilde{\bs{\Phi}} \bl{Z} (\bl{Z}^\top \tilde{\bs{\Phi}} \bl{Z})^{-1} - \bl{Z}^\top \bs{\Omega} \bl{Z} \right\} &= \sum_{j=1}^p \frac{\bl{z}_j^\top \tilde{\bs{\Phi}} \bs{\Omega} \tilde{\bs{\Phi}}\bl{z}_j}{(\bl{z}_j^\top \tilde{\bs{\Phi}} \bl{z}_j)^2} - \sum_{j=1}^p \bl{z}_j^\top \bs{\Omega} \bl{z}_j \\
    &= \sum_{j=1}^p \left( \frac{\bl{z}_j^\top \tilde{\bs{\Phi}} \bs{\Omega} \tilde{\bs{\Phi}}\bl{z}_j}{(\bl{z}_j^\top \tilde{\bs{\Phi}} \bl{z}_j)^2} - \bl{z}_j^\top \bs{\Omega} \bl{z}_j \right).
\end{aligned}
\end{equation*}
By Theorem \ref{thm:main}, each of the summands in the last line above is less than or equal to zero. Therefore,
\begin{equation*}
\begin{aligned}
    \underset{\bl{U} \in \mathcal{V}^{n,p}}{\sup}~\tr \left\{(\bl{U}^\top \tilde{\bs{\Phi}} \bl{U})^{-1} \bl{U}^\top \tilde{\bs{\Phi}} \bs{\Omega} \tilde{\bs{\Phi}} \bl{U} (\bl{U}^\top \tilde{\bs{\Phi}} \bl{U})^{-1} - \bl{U}^\top \bs{\Omega} \bl{U} \right\} &= \underset{\bl{Z} \in \mathcal{V}^{n,p}}{\sup}~\sum_{j=1}^p \left( \frac{\bl{z}_j^\top \tilde{\bs{\Phi}} \bs{\Omega} \tilde{\bs{\Phi}}\bl{z}_j}{(\bl{z}_j^\top \tilde{\bs{\Phi}} \bl{z}_j)^2} - \bl{z}_j^\top \bs{\Omega} \bl{z}_j \right) \\
    &\leq \sum_{j=1}^p \underset{\bl{z}_j \in \mathcal{V}^{n}}{\sup}\left( \frac{\bl{z}_j^\top \tilde{\bs{\Phi}} \bs{\Omega} \tilde{\bs{\Phi}}\bl{z}_j}{(\bl{z}_j^\top \tilde{\bs{\Phi}} \bl{z}_j)^2} - \bl{z}_j^\top \bs{\Omega} \bl{z}_j \right) \\
    &\leq 0,
\end{aligned}    
\end{equation*}
which shows that $\tilde{\bs{\Omega}} \in \mathcal{K}_{\bs{\Omega}}^p$.

($\impliedby$) As in the proof of Theorem \ref{thm:main}, suppose that the diagonal entries of $\tilde{\bs{\Omega}}$ do not satisfy
\begin{equation*}
    1 \leq \frac{\tilde{\omega}_i}{\tilde{\omega}_j} \leq 2\frac{\omega_i}{\omega_j} - 1
\end{equation*}
for all $\omega_i > \omega_j$, $i \in \{1, \dots, n\}, j \in \{1, \dots, n\}$. Then there exists at least one pair of indices, $(i,j)$, for which $\omega_i > \omega_j$, and either
\begin{equation*}
    \frac{\tilde{\omega}_i}{\tilde{\omega}_j} > 2\frac{\omega_i}{\omega_j} - 1
\end{equation*}
or
\begin{equation*}
    \frac{\tilde{\omega}_i}{\tilde{\omega}_j} < 1.
\end{equation*}
In the proof of Theorem \ref{thm:main}, we showed that either of the conditions above imply that it is possible to find a unit vector $\bl{u}$ with entries equal to zero everywhere except at indices $i$ and $j$ such that 
\begin{equation*}
    \frac{H_{\bl{u}}(\tilde{\bs{\Omega}}, \bs{\Omega})}{H_{\bl{u}}(\bl{I}_n, \bs{\Omega})} > 1.
\end{equation*}
Without loss of generality, suppose that $(i,j) = (1,2)$. Then the non-zero entries of $\bl{u}$ occur at indices $1$ and $2$. Again without loss of generality, let $\bl{U} \in \mathbb{R}^{n \times p}$ be a matrix which has its first column equal to $\bl{u}$, and all other $p-1$ columns equal to the $n$-dimensional standard basis vectors $\bs{e}_3, \dots, \bs{e}_{p+1}$. Then $\bl{U} \in \mathcal{V}^{n,p}$ and both $H_{\bl{U}}(\tilde{\bs{\Omega}}, \bs{\Omega})$ and $H_{\bl{U}}(\bl{I}_n, \bs{\Omega})$ are diagonal matrices. Thus,
\begin{equation*}
    \tr \left\{H_{\bl{U}}(\tilde{\bs{\Omega}}, \bs{\Omega}) - H_{\bl{U}}(\bl{I}_n, \bs{\Omega}) \right\} = H_{\bl{u}}(\tilde{\bs{\Omega}}, \bs{\Omega}) - H_{\bl{u}}(\bl{I}_n, \bs{\Omega}) +  0  > 0.
\end{equation*}
Since we were able to construct an orthogonal $\bl{U}$ for which $\tr \left\{H_{\bl{U}}(\tilde{\bs{\Omega}}, \bs{\Omega}) - H_{\bl{U}}(\bl{I}_n, \bs{\Omega}) \right\} > 0$, we conclude that $\tilde{\bs{\Omega}} \notin \mathcal{K}^p_{\bs{\Omega}}$.
\end{proof}

\subsection{{Proof of Corollary \ref{cor:main3}}}

\begin{proof}
    Let $\bs{\Omega}, \tilde{\bs{\Omega}} \in \mathcal{S}_+^n$ be simultaneously diagonalizable with common eigenvectors $\bl{V} \in \mathcal{V}^{n,n}$, and let $\{\lambda_1, \dots, \lambda_n\}$ and $\{\tilde{\lambda}_1, \dots, \tilde{\lambda}_n\}$ be the eigenvalues of $\bs{\Omega}$ and $\tilde{\bs{\Omega}}$, respectively. Then
\begin{equation*}
    H_{\bl{U}}(\tilde{\bs{\Omega}}, \bs{\Omega}) = (\bl{U}^\top \bl{V} \tilde{\bs{\Lambda}}^{-1} \bl{V}^\top \bl{U})^{-1} \bl{U}^\top \bl{V} \tilde{\bs{\Lambda}}^{-1} \bs{\Lambda} \tilde{\bs{\Lambda}}^{-1} \bl{V}^\top \bl{U}(\bl{U}^\top \bl{V} \tilde{\bs{\Lambda}}^{-1} \bl{V}^\top \bl{U})^{-1},
\end{equation*}
and
\begin{equation*}
    H_{\bl{U}}(\bl{I}_n, \bs{\Omega}) = \bl{U}^\top \bl{V} \bs{\Lambda} \bl{V}^\top \bl{U}.
\end{equation*}
Since $\bl{V}^\top \bl{U} \in \mathcal{V}^{n, p}$, it follows that
\begin{equation*}
\underset{\bl{U} \in \mathcal{V}^{n,p}}{\sup}~\frac{|H_{\bl{U}}(\tilde{\bs{\Omega}}, \bs{\Omega})|}{|H_{\bl{U}}(\bl{I}_n, \bs{\Omega})|} = \underset{\bl{U} \in \mathcal{V}^{n,p}}{\sup}~\frac{|H_{\bl{U}}(\tilde{\bs{\Lambda}}, \bs{\Lambda})|}{|H_{\bl{U}}(\bl{I}_n, \bs{\Lambda})|}.
\end{equation*}
Hence, by Corollary \ref{cor:main1}, $\tilde{\bs{\Omega}} \in \mathcal{C}^p_{\bs{\Omega}}$ if and only if $\tilde{\bs{\Lambda}} \in \mathcal{C}^p_{\bs{\Lambda}}$ if and only if
\begin{equation}
        1 \leq \frac{\tilde{\lambda}_i}{\tilde{\lambda}_j} \leq 2\frac{\lambda_i}{\lambda_j} - 1
    \end{equation}
for all $\lambda_i \geq \lambda_j$, $i \in \{1, \dots, n\}, j \in \{1, \dots, n\}$.
\end{proof}

\subsection{{Proof of Proposition \ref{prop:consistency}}}

\begin{proof}
By assumption,
\begin{equation*}
\begin{aligned}
    \frac{1}{n} \sum_{i=1}^n (1/\hat{\omega}_i - 1/\tilde{\omega}_i)\bl{x}_i \bl{x}_i^\top &\overset{p}{\longrightarrow} \bl{0}, \\
    \frac{1}{\sqrt{n}} \sum_{i=1}^n (1/\hat{\omega}_i - 1/\tilde{\omega}_i)(y_i - \bl{x}_i^\top\bs{\beta}) \bl{x}_i &\overset{p}{\longrightarrow} \bl{0}.
\end{aligned}
\end{equation*}
Therefore, 
\begin{equation*}
    \sqrt{n}(b^{n}_{\bl{X}}(\{\hat{\omega}_i\}_{i=1}^\infty) - \bs{\beta}) \overset{d}{\longrightarrow} N_p(\bl{0}, \bl{V}^{-1} \bl{B} \bl{V}^{-1}),
\end{equation*}
where
\begin{equation*}
    \bl{V} = \lim_{n \rightarrow \infty} \frac{1}{n}\sum_{i=1}^{n} \frac{1}{\tilde{\omega}_i}\bl{x}_i \bl{x}_i^\top,~\text{ and }~\bl{B} = \lim_{n \rightarrow \infty} \frac{1}{n}\sum_{i=1}^{n} \frac{\omega_i}{\tilde{\omega}^2_i}\bl{x}_i \bl{x}_i^\top.
\end{equation*}
This implies that
\begin{equation*}
    |\Cov{ b^{n}_{\bl{X}}(\{\hat{\omega}_i\}_{i=1}^{\infty})}| \overset{p}{\longrightarrow} \lim_{n \rightarrow \infty}\left| \left( \sum_{i=1}^n \frac{1}{\tilde{\omega}_i} \bl{x}_i \bl{x}_i^\top\right)^{-1} \left( \sum_{i=1}^n \frac{\omega_i}{\tilde{\omega}^2_i} \bl{x}_i \bl{x}_i^\top\right) \left( \sum_{i=1}^n \frac{1}{\tilde{\omega}_i} \bl{x}_i \bl{x}_i^\top\right)^{-1} \right|.
\end{equation*}
If additionally
\begin{equation*}
    \mathrm{diag}(\tilde{\omega}_1, \dots, \tilde{\omega}_n) \in \mathcal{C}^p_{\mathrm{diag}({\omega}_1, \dots, \omega_n)}
\end{equation*}
for each positive integer $n \geq 2$, then by Corollary \ref{cor:main1},
\begin{equation*}
    \frac{\left| \left( \sum_{i=1}^n \frac{1}{\tilde{\omega}_i} \bl{x}_i \bl{x}_i^\top\right)^{-1} \left( \sum_{i=1}^n \frac{\omega_i}{\tilde{\omega}^2_i} \bl{x}_i \bl{x}_i^\top\right) \left( \sum_{i=1}^n \frac{1}{\tilde{\omega}_i} \bl{x}_i \bl{x}_i^\top\right)^{-1} \right|}{\left| \left( \sum_{i=1}^n \bl{x}_i \bl{x}_i^\top\right)^{-1}\left( \sum_{i=1}^n \omega_i \bl{x}_i \bl{x}_i^\top\right) \left( \sum_{i=1}^n \bl{x}_i \bl{x}_i^\top\right)^{-1}\right|} \leq 1,
\end{equation*}
for each positive integer $n \geq 2$. Therefore,
\begin{equation*}
    \frac{|\Cov{ b^{n}_{\bl{X}}(\{\hat{\omega}_i\}_{i=1}^{\infty})}|}{|\Cov{ b^n_{\bl{X}}(\{1\})}|} \overset{p}{\longrightarrow} \lim_{n \rightarrow \infty}\frac{|\Cov{ b^{n}_{\bl{X}}(\{\tilde{\omega}_i\}_{i=1}^{\infty})}|}{|\Cov{ b^n_{\bl{X}}(\{1\})}|} \leq 1.
\end{equation*}
This concludes the proof.
\end{proof}

\subsection{{Proof of Theorem \ref{thm:asymvar}}}
To prove Theorem \ref{thm:asymvar}, we first present and prove the following four technical lemmas A2 to A5.

\begin{lemma}\label{lem:intid1}
Let $c > 0$. Then
\begin{equation*}
    \int_{-\infty}^\infty \frac{1}{(c + z^2)^{2}}e^{-z^2/2} dz = \frac{\sqrt{\pi/2}}{c} + \frac{\pi(1-c)}{2c^{3/2}}e^{c/2}\mathrm{erfc}(\sqrt{c/2}),
\end{equation*}
where $\mathrm{erfc}(c) = (2/\sqrt{\pi})\int_{c}^\infty e^{-r^2} dr$ is the complementary error function.
\end{lemma}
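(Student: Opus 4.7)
The plan is to derive the identity from a simpler one-parameter integral by differentiation under the integral sign. Observe that the integrand is $-\partial/\partial c$ applied to $(c+z^2)^{-1}$, so defining
\begin{equation*}
    J(c) = \int_{-\infty}^\infty \frac{e^{-z^2/2}}{c+z^2}\,dz,
\end{equation*}
the target integral satisfies $I(c) := \int_{-\infty}^\infty (c+z^2)^{-2} e^{-z^2/2} dz = -J'(c)$, provided one verifies dominated convergence to justify swapping $d/dc$ and $\int$. This is the natural reduction because $J(c)$ is a classical Gaussian-denominator integral with a known closed form.

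To evaluate $J(c)$, I would use the Laplace-type representation $\frac{1}{c+z^2} = \int_0^\infty e^{-t(c+z^2)}\,dt$ and switch the order of integration (Tonelli, since the integrand is positive). The inner Gaussian integral gives $\int_{-\infty}^\infty e^{-z^2(t+1/2)} dz = \sqrt{\pi/(t+1/2)}$, after which the substitution $s = t+1/2$ followed by $u = \sqrt{cs}$ converts the remaining integral into a complementary error function. The result is the well-known closed form
\begin{equation*}
    J(c) = \frac{\pi}{\sqrt{c}}\, e^{c/2}\,\mathrm{erfc}\!\left(\sqrt{c/2}\right).
\end{equation*}

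The final step is a direct differentiation. Writing $J(c) = \pi c^{-1/2} e^{c/2} \mathrm{erfc}(\sqrt{c/2})$, the product rule yields three terms: one from differentiating $c^{-1/2}$, one from differentiating $e^{c/2}$, and one from differentiating $\mathrm{erfc}(\sqrt{c/2})$. Using $\frac{d}{dc}\mathrm{erfc}(\sqrt{c/2}) = -\tfrac{1}{\sqrt{2\pi c}} e^{-c/2}$, the last term collapses to $-\sqrt{\pi/2}/c$ after the $e^{-c/2}$ cancels the $e^{c/2}$ prefactor. Combining the first two terms gives $\pi(c-1) e^{c/2}\mathrm{erfc}(\sqrt{c/2})/(2 c^{3/2})$, and negating produces exactly the claimed right-hand side.

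I do not foresee a serious obstacle. The only care required is (i) justifying the interchange of differentiation and integration for $I = -J'$ (standard, since $(c+z^2)^{-2} e^{-z^2/2}$ is dominated by an integrable function uniformly in $c$ on compact subsets of $(0,\infty)$), and (ii) tracking the chain-rule factor when differentiating $\mathrm{erfc}(\sqrt{c/2})$, which is where an algebraic slip would be easy. Everything else is bookkeeping.
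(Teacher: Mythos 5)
Your proposal is correct, and it verifies: $J(c)=\pi c^{-1/2}e^{c/2}\mathrm{erfc}(\sqrt{c/2})$ is indeed the classical closed form, the derivative formula $\frac{d}{dc}\mathrm{erfc}(\sqrt{c/2})=-e^{-c/2}/\sqrt{2\pi c}$ is right, and $-J'(c)$ reproduces the stated right-hand side exactly, with the dominated-convergence justification you flag being routine on compact subsets of $(0,\infty)$. However, your route differs from the paper's. The paper evaluates the squared-denominator integral directly: it uses the second-moment identity $\int_0^\infty t^2\lambda e^{-\lambda t}\,dt=2/\lambda^2$ to write $(c+z^2)^{-2}$ as a Laplace-type integral with a $t^2$ weight, swaps the order of integration, computes the inner integral as moments of a $N(0,1/(2t+1))$ density, substitutes $r=\sqrt{2t+1}$, and then grinds through four separate integrations by parts, collecting the $\mathrm{erfc}$ and $e^{-c/2}$ coefficients at the end. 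Your approach applies the Laplace representation only at first order, obtains the much simpler $J(c)$, and then gets the square by a single differentiation in $c$; the bookkeeping is substantially lighter and the coefficient-tracking risk is concentrated in one chain-rule step rather than several partial integrations. A side benefit worth noting: the companion identity in the paper's Lemma \ref{lem:intid2} follows from your two quantities almost immediately, since $z^2/(c+z^2)^2=1/(c+z^2)-c/(c+z^2)^2$ gives that integral as $J(c)-cI(c)$, whereas the paper repeats the full direct computation a second time.
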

\begin{proof}
The expectation and variance of an $\mathrm{Exp}(\lambda)$ random variable are $1/\lambda$ and $1/\lambda^2$, respectively. So
\begin{equation*}
    \int_{0}^\infty x \lambda e^{-\lambda x} dx = \frac{1}{\lambda}~\text{ and }~\int_{0}^\infty (x-1/\lambda)^2 \lambda e^{-\lambda x} dx = \frac{1}{\lambda^2}.
\end{equation*}
From this, we obtain the identity
\begin{equation*}
    \int_{0}^\infty t^2 \lambda e^{-\lambda t} dt = \frac{2}{\lambda^2} .
\end{equation*}
Apply this identity to see that 
\begin{equation*}
    \begin{aligned}
        \int_{-\infty}^\infty \frac{1}{(c + z^2)^{2}}e^{-z^2/2} dz &= \frac{1}{2} \int_{-\infty}^\infty \int_0^\infty t^2 (c+z^2)e^{-(c+z^2)t} e^{-z^2/2} dt dz \\ 
        &= \frac{1}{2} \int_0^\infty t^2 e^{-ct} \int_{-\infty}^\infty (c+z^2)e^{-t z^2} e^{-z^2/2} dz dt \\
        &= \frac{\sqrt{2\pi}}{2} \int_0^\infty (2t+1)^{-1/2}t^2 e^{-ct} \int_{-\infty}^\infty (c+z^2)\frac{\sqrt{2t+1}}{\sqrt{2\pi}} e^{-(2t+1)z^2/2} dz dt .\\
    \end{aligned}
\end{equation*}
Recognizing the interior integral as an expectation with respect to a normal density with mean $0$ and variance $1/(2t+1)$, we obtain
\begin{equation*}
    \begin{aligned}
        \int_{-\infty}^\infty \frac{1}{(c + z^2)^{2}}e^{-z^2/2} dz &= \frac{\sqrt{2\pi}}{2} \int_0^\infty (2t+1)^{-1/2}t^2 e^{-ct} \{c + (2t+1)^{-1}\} dt \\
        &= \frac{\sqrt{2\pi}}{2} \left\{ c\int_0^\infty (2t+1)^{-1/2}t^2 e^{-ct} dt + \int_0^\infty (2t+1)^{-3/2}t^2 e^{-ct} dt\right\}.
    \end{aligned}
\end{equation*}
For both integrals above, substitute $r = \sqrt{2t+1}$ so that $dr = (2t+1)^{-1/2} dt$. Then
\begin{equation*}
\begin{aligned}
    \int_0^\infty (2t+1)^{-1/2}t^2 e^{-ct} dt &= \int_1^\infty \{(r^2-1)/2\}^2 e^{-c (r^2-1)/2} dr, 
\end{aligned}
\end{equation*}
and
\begin{equation*}
    \int_0^\infty (2t+1)^{-3/2}t^2 e^{-ct} dt = \int_1^\infty r^{-2} \{(r^2-1)/2\}^2 e^{-c(r^2-1)/2} dr.
\end{equation*}
This yields
\begin{equation*}
    \begin{aligned}
        \int_{-\infty}^\infty \frac{1}{(c + z^2)^{2}}e^{-z^2/2} dz &= \frac{\sqrt{2\pi} e^{c/2}}{8}\int_1^\infty (r^{-2}+c) (r^2-1)^2 e^{-cr^2/2} dr \\
        &= \frac{\sqrt{2\pi} e^{c/2}}{8 c^{3/2}}\int_{\sqrt{c}}^\infty (r^{-2}+1) (r^2-c)^2 e^{-r^2/2} dr \\
        &= \frac{\sqrt{\pi} e^{c/2}}{4 c^{3/2}}\int_{\sqrt{c/2}}^\infty \{4 r^4 + 2(1- 2c)r^2+ (c^2 - 2c) + c^2/(2r^2)\} e^{-r^2} dr ,
    \end{aligned}
\end{equation*}
where in the second and third lines we rescaled the variable of integration without changing its symbol. Evaluating each of the summands above using integration by parts yields
\begin{equation*}
\begin{aligned}
        &\int_{\sqrt{c/2}}^\infty 4 r^4 e^{-r^2} dr = \frac{3}{2}\sqrt{\pi} \mathrm{erfc}(\sqrt{c/2}) + \frac{\sqrt{c}e^{-c/2}(c+3)}{\sqrt{2}},\\
        &\int_{\sqrt{c/2}}^\infty 2(1-2c) r^2 e^{-r^2} dr = \frac{1-2c}{2}\left\{\sqrt{\pi} \mathrm{erfc}(\sqrt{c/2}) + \sqrt{2c} e^{-c/2} \right\},\\
        &\int_{\sqrt{c/2}}^\infty (c^2 - 2c) e^{-r^2} dr = \frac{\sqrt{\pi}(c^2-2c)}{2} \mathrm{erfc}(\sqrt{c/2}),\\
        &\int_{\sqrt{c/2}}^\infty \{c^2/(2r^2)\} e^{-r^2} dr = c^2 \left\{-\frac{\sqrt{\pi}}{2} \mathrm{erfc}(\sqrt{c/2}) + \frac{e^{-c/2}}{\sqrt{2c}}\right\}.
\end{aligned}
\end{equation*}
    
From the display above, the sum of the constants involving the complementary error function is
\begin{equation*}
    \begin{aligned}
        \frac{3}{2}\sqrt{\pi} +\frac{1-2c}{2}\sqrt{\pi} + \frac{c^2-2c}{2}\sqrt{\pi} - (c^2/2)\sqrt{\pi} = 2\sqrt{\pi}\left\{1-c\right\}.
    \end{aligned}
\end{equation*}
The sum of the constants in front of $e^{-c/2}$ is
\begin{equation*}
    \frac{\sqrt{c}(c+3)}{\sqrt{2}} + \frac{\sqrt{c}(1-2c)}{\sqrt{2}} + \frac{c^{3/2}}{\sqrt{2}} = 4\sqrt{\frac{c}{2}}.
\end{equation*}
Putting everything together, we have
\begin{equation*}
\begin{aligned}
    \int_{-\infty}^\infty \frac{1}{(c + z^2)^{2}}e^{-z^2/2} dz &= \frac{\sqrt{\pi/2}}{c} + \frac{\pi(1-c)}{2c^{3/2}}e^{c/2}\mathrm{erfc}(\sqrt{c/2}),
\end{aligned}
\end{equation*}
which concludes the proof.
\end{proof}

\begin{lemma}\label{lem:intid2}
Let $c > 0$. Then
\begin{equation*}
\int_{-\infty}^\infty \frac{z^2}{(c + z^2)^{2}}e^{-z^2/2} dz = \frac{1}{2}\left\{\pi(\sqrt{c} + 1/\sqrt{c}) e^{c/2} \mathrm{erfc}(\sqrt{c/2}) -\sqrt{2\pi} \right\}.
\end{equation*}
\end{lemma}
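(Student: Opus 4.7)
The plan is to reduce Lemma A3 to Lemma A2 together with one new integral, by exploiting the algebraic identity
\begin{equation*}
    \frac{z^2}{(c+z^2)^2} = \frac{1}{c+z^2} - \frac{c}{(c+z^2)^2}.
\end{equation*}
Integrating both sides against $e^{-z^2/2}$ reduces the task to evaluating $I(c) := \int_{-\infty}^\infty (c+z^2)^{-1} e^{-z^2/2}\,dz$, since the $(c+z^2)^{-2}$ integral is supplied by Lemma A2. This is appealing because the degree of the denominator drops by one relative to Lemma A2, so the same machinery should apply with strictly less work.

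To compute $I(c)$, I would mirror the argument of Lemma A2 but use the simpler exponential integral representation $1/(c+z^2) = \int_0^\infty e^{-t(c+z^2)}\,dt$ (no factor of $t$ is needed here, since the exponent of the denominator is $1$). Swapping the order of integration by Fubini gives $I(c) = \int_0^\infty e^{-ct} \sqrt{2\pi/(2t+1)}\,dt$, after doing the inner Gaussian integral. Then I would substitute $r = \sqrt{2t+1}$ to obtain $I(c) = \sqrt{2\pi}\, e^{c/2} \int_1^\infty e^{-cr^2/2}\,dr$, and a further rescaling $u = r\sqrt{c/2}$ expresses the remaining integral in terms of the complementary error function, yielding
\begin{equation*}
    I(c) = \frac{\pi}{\sqrt{c}}\, e^{c/2}\,\mathrm{erfc}(\sqrt{c/2}).
\end{equation*}

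Finally, substituting the value of $I(c)$ and the formula from Lemma A2 into the decomposition above gives
\begin{equation*}
    \int_{-\infty}^\infty \frac{z^2}{(c+z^2)^2} e^{-z^2/2}\,dz = \left[\frac{\pi}{\sqrt{c}} - \frac{\pi(1-c)}{2\sqrt{c}}\right] e^{c/2}\,\mathrm{erfc}(\sqrt{c/2}) - \sqrt{\pi/2},
\end{equation*}
and combining the bracketed terms collapses the coefficient to $\pi(1+c)/(2\sqrt{c}) = \tfrac{1}{2}\pi(\sqrt{c}+1/\sqrt{c})$, which is exactly the stated expression.

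The main obstacle is the intermediate evaluation of $I(c)$: everything else is bookkeeping. Compared with Lemma A2, which required splitting into four pieces and an integration-by-parts on each, the $I(c)$ calculation is considerably tamer because the integrand has no polynomial factor after the Fubini swap. The only care needed is tracking the constants through the two substitutions and normalizing the final integral so that it matches the definition $\mathrm{erfc}(x) = (2/\sqrt{\pi})\int_x^\infty e^{-r^2}\,dr$.
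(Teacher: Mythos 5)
Your proposal is correct, and it takes a genuinely different route from the paper. The paper proves Lemma~\ref{lem:intid2} from scratch by the same machinery as Lemma~\ref{lem:intid1}: it represents $1/(c+z^2)^2$ via the identity $\int_0^\infty t^2\lambda e^{-\lambda t}dt = 2/\lambda^2$, swaps the order of integration (now requiring the Gaussian fourth moment because of the extra $z^2$ factor), substitutes $r=\sqrt{2t+1}$, and then evaluates four separate pieces by integration by parts before recombining the constants. You instead exploit the algebraic split $z^2/(c+z^2)^2 = (c+z^2)^{-1} - c\,(c+z^2)^{-2}$, so the only new work is the integral $I(c)=\int_{-\infty}^\infty (c+z^2)^{-1}e^{-z^2/2}dz$; your evaluation of it is correct --- the representation $1/(c+z^2)=\int_0^\infty e^{-t(c+z^2)}dt$, the Fubini/Tonelli swap (justified since the integrand is nonnegative), the Gaussian inner integral $\sqrt{2\pi/(2t+1)}$, and the substitutions indeed give $I(c)=(\pi/\sqrt{c})\,e^{c/2}\mathrm{erfc}(\sqrt{c/2})$, and combining with Lemma~\ref{lem:intid1} reproduces the stated coefficient $\pi(1+c)/(2\sqrt{c})$ and the constant $-\sqrt{\pi/2}=-\tfrac12\sqrt{2\pi}$ exactly. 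What your route buys is economy and robustness: no polynomial factor survives the Fubini swap, so no integration by parts is needed, and the bookkeeping that makes the paper's proof error-prone largely disappears; the cost is that your lemma is no longer self-contained but leans on Lemma~\ref{lem:intid1}, which is harmless here since that lemma is proved immediately before and is needed for Theorem~\ref{thm:asymvar} anyway.
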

\begin{proof}
The expectation and variance of an $\mathrm{Exp}(\lambda)$ random variable are $1/\lambda$ and $1/\lambda^2$, respectively. So
\begin{equation*}
    \int_{0}^\infty x \lambda e^{-\lambda x} dx = \frac{1}{\lambda}~\text{ and }~\int_{0}^\infty (x-1/\lambda)^2 \lambda e^{-\lambda x} dx = \frac{1}{\lambda^2}.
\end{equation*}
From this, we obtain the identity
\begin{equation*}
    \int_{0}^\infty t^2 \lambda e^{-\lambda t} dt = \frac{2}{\lambda^2} .
\end{equation*}
Apply this identity to see that 
\begin{equation*}
    \begin{aligned}
        \int_{-\infty}^\infty \frac{z^2}{(c + z^2)^{2}}e^{-z^2/2} dz &= \frac{1}{2} \int_{-\infty}^\infty \int_0^\infty t^2 (c+z^2)e^{-(c+z^2)t} z^2 e^{-z^2/2} dt dz \\ 
        &= \frac{1}{2} \int_0^\infty t^2 e^{-ct} \int_{-\infty}^\infty z^2(c+z^2)e^{-t z^2} e^{-z^2/2} dz dt \\
        &= \frac{\sqrt{2\pi}}{2} \int_0^\infty (2t+1)^{-1/2}t^2 e^{-ct} \int_{-\infty}^\infty (cz^2+z^4)\frac{\sqrt{2t+1}}{\sqrt{2\pi}} e^{-(2t+1)z^2/2} dz dt .\\
    \end{aligned}
\end{equation*}
Recognizing the interior integral as an expectation with respect to a normal density with mean $0$ and variance $1/(2t+1)$, we obtain
\begin{equation*}
    \begin{aligned}
        \int_{-\infty}^\infty \frac{1}{(c + z^2)^{2}}e^{-z^2/2} dz &= \frac{\sqrt{2\pi}}{2} \int_0^\infty (2t+1)^{-1/2}t^2 e^{-ct} \{c(2t+1)^{-1} + 3(2t+1)^{-2}\} dt \\
        &= \frac{\sqrt{2\pi}}{2} \left\{ c\int_0^\infty (2t+1)^{-3/2}t^2 e^{-ct} dt + 3\int_0^\infty (2t+1)^{-5/2}t^2 e^{-ct} dt\right\}.
    \end{aligned}
\end{equation*}
For both integrals above, substitute $r = \sqrt{2t+1}$ so that $dr = (2t+1)^{-1/2} dt$. Then
\begin{equation*}
    \int_0^\infty (2t+1)^{-3/2}t^2 e^{-ct} dt = \int_1^\infty r^{-2} \{(r^2-1)/2\}^2 e^{-c(r^2-1)/2} dr.
\end{equation*}
and
\begin{equation*}
    \int_0^\infty (2t+1)^{-5/2}t^2 e^{-ct} dt = \int_1^\infty r^{-4} \{(r^2-1)/2\}^2 e^{-c(r^2-1)/2} dr.
\end{equation*}
This yields
\begin{equation*}
    \begin{aligned}
        \int_{-\infty}^\infty \frac{1}{(c + z^2)^{2}}e^{-z^2/2} dz &= \frac{\sqrt{2\pi} e^{c/2}}{8}\int_1^\infty r^{-2} (3r^{-2}+c) (r^2-1)^2 e^{-cr^2/2} dr \\
        &= \frac{\sqrt{2\pi} e^{c/2}}{8 \sqrt{c}}\int_{\sqrt{c}}^\infty r^{-2}(3r^{-2}+1) (r^2-c)^2 e^{-r^2/2} dr \\
        &= \frac{\sqrt{\pi} e^{c/2}}{4 \sqrt{c}}\int_{\sqrt{c/2}}^\infty \{2r^2 + (c^2-6c)/(2r^2) + (3-2c) + 3c^2/(4r^4)\} e^{-r^2} dr \\
    \end{aligned}
\end{equation*}
where in the second and third lines we rescaled the variable of integration without changing its symbol. Evaluating each of the summands above using integration by parts yields
\begin{equation*}
    \begin{aligned}
        &\int_{\sqrt{c/2}}^\infty 2 r^2 e^{-r^2} dr = \frac{1}{2}\left\{\sqrt{\pi} \mathrm{erfc}(\sqrt{c/2}) + \sqrt{2c} e^{-c/2} \right\},\\
        &\int_{\sqrt{c/2}}^\infty (c^2-6c)/(2r^2) e^{-r^2} dr = (c^2-6c)\left\{-\frac{\sqrt{\pi}}{2} \mathrm{erfc}(\sqrt{c/2}) + \frac{e^{-c/2}}{\sqrt{2c}}\right\},\\
        &\int_{\sqrt{c/2}}^\infty (3 - 2c) e^{-r^2} dr = \frac{\sqrt{\pi}(3-2c)}{2} \mathrm{erfc}(\sqrt{c/2}),\\
        &\int_{\sqrt{c/2}}^\infty \{3c^2/(4r^4)\} e^{-r^2} dr = \frac{c^2}{2} \left\{\sqrt{\pi} \mathrm{erfc}(\sqrt{c/2}) - \frac{\sqrt{2}(c-1)e^{-c/2}}{c^{3/2}}\right\}.
    \end{aligned}
\end{equation*} 

From the display above, the sum of the constants involving the complementary error function is
\begin{equation*}
    \begin{aligned}
        \frac{1}{2}\sqrt{\pi} - \frac{c^2-6c}{2}\sqrt{\pi} +\frac{3-2c}{2}\sqrt{\pi} + (c^2/2)\sqrt{\pi} = 2\sqrt{\pi}\left\{1+c\right\}.
    \end{aligned}
\end{equation*}
The sum of the constants in front of $e^{-c/2}$ is
\begin{equation*}
    \frac{\sqrt{2c}}{2} + \frac{c^2-6c}{\sqrt{2c}} - \frac{\sqrt{2c}(c-1)}{2} = -\frac{4\sqrt{c}}{\sqrt{2}}.
\end{equation*}
Putting everything together, we have
\begin{equation*}
\begin{aligned}
    \int_{-\infty}^\infty \frac{z^2}{(c + z^2)^{2}}e^{-z^2/2} dz &=\frac{\pi(1+c) e^{c/2}}{2\sqrt{c}} \mathrm{erfc}(\sqrt{c/2}) -\sqrt{\pi/2},
\end{aligned}
\end{equation*}
which concludes the proof.
\end{proof}

\begin{lemma}\label{lem:tnorm}
    Let $z$ be a truncated standard normal random variable taking values on $(a, \infty)$ with probability density function
    \begin{equation*}
        p(z) = \frac{1}{\sqrt{2 \pi}} e^{-z^2/2} \bigg/ \left( 1 - \int_{-\infty}^a \frac{1}{\sqrt{2 \pi}} e^{-z^2/2}dz\right),
    \end{equation*}
    for some $a > 0$.
    Then the expectation of $z$ is bounded below and above as follows
    \begin{equation*}
        \frac{a(a^2+3)}{a^2+2} \leq \Exp{z} \leq a + 1/a.
    \end{equation*}
\end{lemma}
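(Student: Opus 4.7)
The plan is to reduce the claim to a classical pair of Mills-ratio bounds and then verify each bound with a one-line calculus argument based on iterated integration by parts. First I would compute the expectation in closed form. Writing $\Phi$ and $\phi$ for the standard normal CDF and PDF, a direct integration gives
\begin{equation*}
    \Exp{z} = \frac{1}{1-\Phi(a)}\int_a^\infty z\,\phi(z)\,dz = \frac{\phi(a)}{1-\Phi(a)} = \frac{e^{-a^2/2}}{I(a)},~\text{ where }~I(a) := \int_a^\infty e^{-z^2/2}\,dz.
\end{equation*}
Thus the lemma reduces to the two inverse Mills-ratio inequalities
\begin{equation*}
    \frac{a(a^2+3)}{a^2+2} \leq \frac{e^{-a^2/2}}{I(a)} \leq \frac{a^2+1}{a},
\end{equation*}
equivalently $(a^2+1)I(a) \geq a\,e^{-a^2/2}$ and $(a^3+3a)I(a) \leq (a^2+2)\,e^{-a^2/2}$.

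For the upper inequality on $\Exp{z}$ I would introduce $f(a) = (a^2+1)I(a) - a\,e^{-a^2/2}$. A direct differentiation (using $I'(a) = -e^{-a^2/2}$) collapses to
\begin{equation*}
    f'(a) = 2a\,I(a) - 2\,e^{-a^2/2}.
\end{equation*}
The elementary bound $I(a) \leq e^{-a^2/2}/a$, obtained by a single integration by parts against $-z\,e^{-z^2/2}$, gives $f'(a) \leq 0$. Since $f(a) \to 0$ as $a \to \infty$, it follows that $f(a) \geq 0$ for all $a > 0$, which is exactly the desired upper bound on $\Exp{z}$.

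For the lower inequality I would use the auxiliary function $h(a) = (a^3+3a)I(a) - (a^2+2)\,e^{-a^2/2}$. Differentiating and simplifying, all of the $e^{-a^2/2}$ terms miraculously cancel down to
\begin{equation*}
    h'(a) = 3\bigl[(a^2+1)I(a) - a\,e^{-a^2/2}\bigr] = 3f(a) \geq 0
\end{equation*}
by the previous step. Since $h(a) \to 0$ as $a \to \infty$, monotonicity now gives $h(a) \leq 0$, which rearranges to $\Exp{z} \geq a(a^2+3)/(a^2+2)$.

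The main obstacle is purely bookkeeping: getting the derivatives of $f$ and $h$ to simplify cleanly. There is nothing deep here—these bounds are in fact the second and third convergents of the classical continued-fraction expansion of the Mills ratio—so the only risk is an algebra slip in the cancellations that produce $f'(a) = 2aI(a) - 2e^{-a^2/2}$ and $h'(a) = 3f(a)$. Once those identities are checked, the two limits at infinity close the argument.
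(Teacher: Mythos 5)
Your proposal is correct, and the two derivative identities it hinges on check out: with $I(a)=\int_a^\infty e^{-z^2/2}\,dz$ and $I'(a)=-e^{-a^2/2}$, one indeed gets $f'(a)=2aI(a)-2e^{-a^2/2}$ and $h'(a)=3\bigl[(a^2+1)I(a)-ae^{-a^2/2}\bigr]=3f(a)$, the elementary bound $I(a)\le e^{-a^2/2}/a$ forces $f'\le 0$, and since both auxiliary functions are polynomial multiples of Gaussian-decaying quantities they vanish at infinity, so the monotonicity arguments close both inequalities. However, your route is genuinely different from the paper's. The paper gets the upper bound $\Exp{z}\le a+1/a$ by a pointwise comparison of integrands ($z-a\le z^2/a-z$ on $(a,\infty)$) combined with the standard formulas for the first two moments of a truncated normal, and it obtains the lower bound $\Exp{z}\ge a(a^2+3)/(a^2+2)$ by writing $\Exp{z}=e^{-a^2/2}/\{\sqrt{2\pi}\,Q(a)\}$ and invoking a cited upper bound on the $Q$-function (the Peri\'c-type bound $Q(x)\le \tfrac{1}{\sqrt{2\pi}}\tfrac{x^2+2}{x(x^2+3)}e^{-x^2/2}$). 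Your argument instead proves both Mills-ratio convergents from scratch: it is fully self-contained (no appeal to an external sharp $Q$-function inequality), and the identity $h'=3f$ makes the sharper lower bound a direct consequence of the cruder upper bound, which is a tidier logical structure. What the paper's approach buys in exchange is brevity and no calculus bookkeeping, since the harder inequality is delegated to the literature; what yours buys is independence from that citation and an explicit exhibition of the continued-fraction mechanism behind both bounds.
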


\begin{proof}
From the definition 
\begin{equation*}
    \Exp{z} = \int_a^\infty p(z) dz = \int_a^\infty \frac{1}{\sqrt{2 \pi}} z e^{-z^2/2}dz \bigg/ \left( 1 - \int_{-\infty}^a \frac{1}{\sqrt{2 \pi}} e^{-z^2/2}dz\right).
\end{equation*}
Due to the fact that $z/a \geq 1$ on the domain of integration, it follows that
\begin{equation*}
\begin{aligned}
    \Exp{z} &= \int_a^\infty \frac{1}{\sqrt{2 \pi}} z e^{-z^2/2}dz \bigg/ \left( 1 - \int_{-\infty}^a \frac{1}{\sqrt{2 \pi}} e^{-z^2/2}dz\right) \\
    &= a + \frac{\int_a^\infty \frac{1}{\sqrt{2 \pi}} (z-a) e^{-z^2/2}dz}{\int_{a}^\infty \frac{1}{\sqrt{2 \pi}} e^{-z^2/2}dz} \\
    &\leq a+ \frac{\int_a^\infty \frac{1}{\sqrt{2 \pi}} (z^2/a-z) e^{-z^2/2}dz}{\int_{a}^\infty \frac{1}{\sqrt{2 \pi}} e^{-z^2/2}dz} \\
    &= a + \frac{1}{a}\frac{\int_a^\infty \frac{1}{\sqrt{2 \pi}} z^2 e^{-z^2/2}dz}{\int_{a}^\infty \frac{1}{\sqrt{2 \pi}} e^{-z^2/2}dz} - \frac{\int_a^\infty \frac{1}{\sqrt{2 \pi}} z e^{-z^2/2}dz}{\int_{a}^\infty \frac{1}{\sqrt{2 \pi}} e^{-z^2/2}dz} \\
    &= a + \frac{1}{a}\left(1 + a \frac{e^{-a^2/2} / \sqrt{2\pi}}{\int_{a}^\infty \frac{1}{\sqrt{2 \pi}} e^{-z^2/2}dz}\right) - \frac{e^{-a^2/2} / \sqrt{2\pi}}{\int_{a}^\infty \frac{1}{\sqrt{2 \pi}} e^{-z^2/2}dz}\\
    &= a + 1/a,
\end{aligned}
\end{equation*}
where in the fifth line we applied the formulae for the first and second moments of a truncated standard normal random variable \citep{johnson_continuous_1994}.

For the lower bound, we apply an upper bound on the so-called $Q$-function, defined as
\begin{equation*}
    Q(x) = \frac{1}{\sqrt{2}} \mathrm{erfc}(x / \sqrt{2}) = \frac{1}{\sqrt{2\pi}} \int_{x}^\infty e^{-z^2/2} dz.
\end{equation*}
This bound, developed by \cite{peric_class_2019}, states that
\begin{equation}\label{eq:qfun}
    Q(x) \leq \frac{1}{\sqrt{2\pi}} \frac{x^2+2}{x(x^2+3)} e^{-x^2/2},~x >0.
\end{equation}

By rearranging terms, see that
\begin{equation*}
    \Exp{z} = \frac{e^{-a^2/2} / \sqrt{2\pi}}{Q(a)},
\end{equation*}
so that applying \eqref{eq:qfun} yields
\begin{equation*}
    \Exp{z} \geq \frac{a(a^2+3)}{a^2+2},
\end{equation*}
and the proof is done.
\end{proof}

\begin{lemma}\label{lem:hfun}
Define the function
\begin{equation*}
    h(x) := \sqrt{2\pi} x e^{x^2/2} \int_{-\infty}^{-x} \frac{1}{\sqrt{2 \pi}} e^{-z^2/2}dz.
\end{equation*}
The following properties hold
\begin{enumerate}
    \item $h(x)$ is non-decreasing for $x>0$.
    \item $x^2 / (x^2 +1) \leq h(x) \leq (x^2 + 2) / (x^2 + 3)$ for $x > 0$.
    \item $h(x)$ is concave for $x > 0$.
    \item $x^2\{1 - h(x)\}$ is non-decreasing for $x>0$.
\end{enumerate}
\end{lemma}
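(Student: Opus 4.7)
The starting point is the identity
\[
h(x) = x\cdot m(x), \qquad m(x) := \sqrt{2\pi}\, e^{x^2/2}\int_x^\infty \frac{1}{\sqrt{2\pi}}e^{-z^2/2}\, dz,
\]
where $m(x)$ is the Mills ratio of the standard normal distribution (and the integral in the definition of $h$ equals this tail probability by symmetry). A direct differentiation gives the useful identity $m'(x) = x\,m(x) - 1 = h(x) - 1$, which will simplify every subsequent computation. The plan is to reduce all four properties of $h$ to two classical bounds on the Mills ratio, namely
\[
\textrm{(L)}\quad m(x) \geq \frac{x}{x^2+1}, \qquad \textrm{(U)}\quad m(x) \leq \frac{x^2+2}{x(x^2+3)}, \qquad x>0.
\]
Bound (U) is exactly the \cite{peric_class_2019} inequality already invoked in the proof of Lemma \ref{lem:tnorm}, and (L) is a classical Mills-ratio bound (for example, Gordon's inequality), which follows from integration by parts.

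Given these two bounds, the four claims fall out from short derivative computations. For property 2, multiply (L) and (U) by $x>0$. For property 1, compute
\[
h'(x) = m(x) + x\,m'(x) = (1+x^2)\,m(x) - x,
\]
which is nonnegative precisely by (L). For property 3, differentiate once more and substitute $m'(x) = xm(x) - 1$ to obtain
\[
h''(x) = m(x)\cdot x(x^2+3) - (x^2+2),
\]
so $h''(x)\le 0$ is equivalent to (U). For property 4, set $g(x) := x^2(1-h(x)) = x^2 - x^3 m(x)$; expanding $g'(x)$ and again using $m'(x) = xm(x) - 1$ yields
\[
g'(x) = x\bigl[(x^2+2) - x(x^2+3)\,m(x)\bigr],
\]
which is nonnegative on $x>0$ by (U).

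The only nonroutine ingredient is the pair of Mills-ratio bounds (L) and (U); since (U) is already cited and (L) is textbook, there is no genuine obstacle. Notably, properties 1, 3, and 4 each reduce to exactly one of the two inequalities, so the entire proof amounts to a compact setup followed by four brief derivative verifications.
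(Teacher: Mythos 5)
Your proposal is correct and follows essentially the same route as the paper: your Mills-ratio bounds (L) and (U) are exactly the content of the paper's Lemma \ref{lem:tnorm} (its upper bound $\Exp{z}\le a+1/a$ is (L), its lower bound is the \cite{peric_class_2019} inequality (U)), and your derivative computations for $h'$, $h''$, and $x^2\{1-h(x)\}$ coincide with the paper's after the substitution $h(x)=x\,m(x)$, with $m'(x)=x\,m(x)-1$ playing the role of the paper's identity $h'(x)=(x+1/x)h(x)-x$. The only cosmetic difference is that the paper derives the lower bound in property 2 by rearranging $h'\ge 0$ rather than quoting Gordon's inequality directly.
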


\begin{proof}
To prove the first property, note that
\begin{equation*}
\begin{aligned}
    \frac{d}{dx} \log h(x) &= x + 1/x - \frac{e^{-x^2/2}/\sqrt{2\pi}}{\int_{-\infty}^{-x} \frac{1}{\sqrt{2 \pi}} e^{-z^2/2}dz} \\
    &= x + 1/x - \frac{e^{-x^2/2}/\sqrt{2\pi}}{\int_{x}^{\infty} \frac{1}{\sqrt{2 \pi}} e^{-z^2/2}dz} \\
    &= x + 1/x - x/h(x),
 \end{aligned}   
\end{equation*}
where we used the symmetry of the standard normal probability density function to arrive at the second line. By Lemma \ref{lem:tnorm}, the expression in the second line is non-negative for all $x > 0$. Hence, $\log h(x)$ is non-decreasing for $x >0$, which implies that $h(x)$ is non-decreasing for $x > 0$.

For the second property, both the inequality $\frac{d}{dx} \log h(x) \geq 0,~\forall x>0$, and the inequality
\begin{equation*}
\begin{aligned}
    \frac{d}{dx} h(x) &= \left\{\frac{d}{dx} \log h(x)\right\} h(x) \\
    &= (x+1/x)h(x) - x \\
    &\geq 0
\end{aligned}
\end{equation*}
for $x > 0$ yield the lower bound
\begin{equation*}
    \begin{aligned}
        h(x) \geq \frac{x^2}{x^2+1},~\forall x> 0
    \end{aligned}
\end{equation*}
upon rearrangement. A direct application of the \cite{peric_class_2019} inequality (see the proof of Lemma \ref{lem:tnorm}) then yields the upper bound
\begin{equation*}
    h(x) \leq \frac{x^2+2}{x^2+3},~x>0.
\end{equation*}

To see that $h(x)$ is concave, take its second derivative to obtain
\begin{equation*}
    \begin{aligned}
        \frac{d^2}{dx^2} h(x) &= \frac{d}{dx} \left\{ (x+1/x)h(x) - x \right\} \\
        &= h(x) + x\{(x + 1/x)h(x) - x\} - h(x)/x^2 + (1 + 1/x^2)h(x) - 2 \\
        &= (x^2+3) h(x) - (x^2  + 2). \\
    \end{aligned}
\end{equation*}
By the previously established upper bound on $h(x)$, we conclude that
\begin{equation*}
    \frac{d^2}{dx^2} h(x) \leq 0,~\forall x>0,
\end{equation*}
so $h(x)$ is concave for $x > 0$.

Finally, note that
\begin{equation*}
    \begin{aligned}
        \frac{d}{dx} [x^2\{1 - h(x)\}] &= 2x - 2x h(x) - x^2\{(x + 1/x)h(x) - x\} \\
        &= x\left\{(x^2 + 2) - (x^2 + 3)h(x)\right\},
    \end{aligned}
\end{equation*}
For all $x > 0$ the sign of the expression above is equal to the sign of $-\frac{d^2}{dx^2} h(x)$. By the concavity of $h(x)$, we conclude that $x^2\{1 - h(x)\}$ is non-decreasing for $x > 0$.
\end{proof}

Now we prove Theorem \ref{thm:asymvar}.
\begin{proof}[Proof of Theorem \ref{thm:asymvar}]
The estimate $\bs{\beta}^*$ maximizes the marginal likelihood of \eqref{eq:tmod}. The logarithm of the marginal likelihood for $y_i$ under \eqref{eq:tmod} is
\begin{equation*}
    \ell(y_i ; \bs{\beta}) = \log \frac{\Gamma\left(\frac{\nu +1 }{2}\right)}{\Gamma(\nu/2)\sqrt{\pi \nu}} - \frac{1}{2}\log \omega_0 - \frac{\nu+1}{2} \log\left(1 + \frac{(y_i - \bl{x}_i^\top \bs{\beta})^2}{\nu \omega_0} \right).
\end{equation*}
Therefore
\begin{equation*}
    \nabla \ell(y_i ; \bs{\beta}) = (\nu+1) \frac{(y_i - \bl{x}_i^\top \bs{\beta})}{\nu \omega_0 + (y_i - \bl{x}_i^\top \bs{\beta})^2 } \bl{x}_i,
\end{equation*}
and
\begin{equation*}
    \nabla \ell(y_i ; \bs{\beta}) \nabla \ell(y_i ; \bs{\beta})^\top = (\nu+1)^2 \frac{(y_i - \bl{x}_i^\top \bs{\beta})^2}{\{\nu \omega_0 + (y_i - \bl{x}_i^\top \bs{\beta})^2\}^2 } \bl{x}_i \bl{x}_i^\top.
\end{equation*}
For the second-order derivatives, we obtain
\begin{equation*}
\begin{aligned}
    \nabla^2 \ell(y_i ; \bs{\beta}) &= 2 (\nu+1) \frac{(y_i - \bl{x}_i^\top \bs{\beta})^2}{\{\nu \omega_0 + (y_i - \bl{x}_i^\top \bs{\beta})^2\}^2 } \bl{x}_i \bl{x}_i^\top - (\nu+1) \frac{1}{\nu \omega_0 + (y_i - \bl{x}_i^\top \bs{\beta})^2 } \bl{x}_i \bl{x}_i^\top\\
    &=(\nu+1) \frac{(y_i - \bl{x}_i^\top \bs{\beta})^2 - \nu \omega_0}{\{\nu \omega_0 + (y_i - \bl{x}_i^\top \bs{\beta})^2\}^2} \bl{x}_i \bl{x}_i^\top.
\end{aligned}
\end{equation*}
Following \cite{stefanski_calculus_2002}, we evaluate the expectations
\begin{equation*}
\begin{aligned}
    \Exp{  \nabla \ell(y_i ; \bs{\beta}) \nabla \ell(y_i ; \bs{\beta})^\top}~\text{ and }~\Exp{ \nabla^2 \ell(y_i ; \bs{\beta}) }
\end{aligned}
\end{equation*}
with respect to \eqref{eq:smod}. This yields
\begin{equation}
    \begin{aligned}
        \Exp{  \nabla \ell(y_i ; \bs{\beta}) \nabla \ell(y_i ; \bs{\beta})^\top} = \frac{(\nu+1)^2}{\sqrt{2 \pi \omega_i}} \left(\int_{-\infty}^\infty \frac{(y_i - \bl{x}_i^\top \bs{\beta})^2}{\{\nu \omega_0 + (y_i - \bl{x}_i^\top \bs{\beta})^2\}^2 } e^{-\frac{(y_i - \bl{x}_i \bs{\beta})^2}{2\omega_i}} dy_i \right)\bl{x}_i \bl{x}_i^\top,
    \end{aligned}
\end{equation}
and
\begin{equation}\label{eq:int0}
    \begin{aligned}
        \Exp{ \nabla^2 \ell(y_i ; \bs{\beta}) } = \frac{\nu+1}{\sqrt{2\pi \omega_i}} \left(\int_{-\infty}^\infty \frac{(y_i - \bl{x}_i^\top \bs{\beta})^2 - \nu \omega_0}{\{\nu \omega_0 + (y_i - \bl{x}_i^\top \bs{\beta})^2\}^2} e^{-\frac{(y_i - \bl{x}_i \bs{\beta})^2}{2\omega_i}} dy_i \right) \bl{x}_i \bl{x}_i^\top .
    \end{aligned}
\end{equation}

Making the substitution $z_i = (y_i - \bl{x}_i \bs{\beta})/\sqrt{\omega_i}$ in each of the integrals above, we obtain the more compact expressions
\begin{equation}\label{eq:int1}
    \begin{aligned}
        \Exp{  \nabla \ell(y_i ; \bs{\beta}) \nabla \ell(y_i ; \bs{\beta})^\top} = \frac{(\nu+1)^2}{\sqrt{2 \pi}\omega_i} \left(\int_{-\infty}^\infty \frac{z_i^2}{\{\nu \omega_0/\omega_i + z_i^2\}^2 } e^{-z_i^2 / 2} dz_i \right)\bl{x}_i \bl{x}_i^\top,
    \end{aligned}
\end{equation}
and
\begin{equation}\label{eq:int2}
    \begin{aligned}
        \Exp{ \nabla^2 \ell(y_i ; \bs{\beta}) } = \frac{\nu+1}{\sqrt{2\pi} \omega_i} \left(\int_{-\infty}^\infty \frac{z_i^2 - \nu \omega_0 /\omega_i}{\{\nu \omega_0 / \omega_i + z_i^2\}^2} e^{-z_i^2/2} dz_i \right) \bl{x}_i \bl{x}_i^\top .
    \end{aligned}
\end{equation}

Starting with \eqref{eq:int1}, apply Lemma \ref{lem:intid2} to find
\begin{equation*}
\begin{aligned}
    \int_{-\infty}^\infty \frac{z_i^2}{\{\nu \omega_0/\omega_i + z_i^2\}^2 } e^{-z_i^2 / 2} dz_i &= \frac{1}{2}\left\{\pi \left(\sqrt{\frac{\nu \omega_0}{\omega_i}} + \sqrt{\frac{\omega_i}{\nu \omega_0}} \right) e^{\frac{\nu \omega_0}{2\omega_i}} \mathrm{erfc}\left(\sqrt{\frac{\nu \omega_0}{2\omega_i}} \right) -\sqrt{2\pi} \right\}.
\end{aligned}
\end{equation*}
Therefore
\begin{equation*}
\begin{aligned}
    \Exp{  \nabla \ell(y_i ; \bs{\beta}) \nabla \ell(y_i ; \bs{\beta})^\top} &= \frac{(\nu+1)^2}{2\omega_i}\left\{\sqrt{\frac{\pi}{2}} \left(\sqrt{\frac{\nu \omega_0}{\omega_i}} + \sqrt{\frac{\omega_i}{\nu \omega_0}} \right) e^{\frac{\nu \omega_0}{2\omega_i}} \mathrm{erfc}\left(\sqrt{\frac{\nu \omega_0}{2\omega_i}} \right) - 1 \right\} \\
    &= \frac{(\nu+1)^2}{2\omega_i}\left\{ \left(\sqrt{\frac{\nu \omega_0}{\omega_i}} + \sqrt{\frac{\omega_i}{\nu \omega_0}} \right) e^{\frac{\nu \omega_0}{2\omega_i}} \left(\int_{-\infty}^{-\sqrt{\frac{\nu \omega_0}{\omega_i}}} e^{-z^2/2} dz\right) - 1 \right\},
\end{aligned}
\end{equation*}
where in the second line we used the identity $\mathrm{erfc}(-x/\sqrt{2}) / 2 = (2\pi)^{-1/2} \int_{-\infty}^x e^{-z^2/2} dz$.

Next, evaluating \eqref{eq:int2} in light of Lemmas \ref{lem:intid1} and \ref{lem:intid2} yields
\begin{equation*}
    \begin{aligned}
        \int_{-\infty}^\infty \frac{z_i^2 - \nu \omega_0 /\omega_i}{\{\nu \omega_0 / \omega_i + z_i^2\}^2} e^{-z_i^2/2} dz_i &= \frac{1}{2}\left\{\pi \left(\sqrt{\frac{\nu \omega_0}{\omega_i}} + \sqrt{\frac{\omega_i}{\nu \omega_0}} \right) e^{\frac{\nu \omega_0}{2\omega_i}} \mathrm{erfc}\left(\sqrt{\frac{\nu \omega_0}{2\omega_i}} \right) -\sqrt{2\pi} \right\} - \\
        &\quad~\frac{1}{2}\left\{\sqrt{2\pi} + \pi\left(\sqrt{\frac{\omega_i}{\nu \omega_0}}-\sqrt{\frac{\nu \omega_0}{\omega_i}}\right) e^{\frac{\nu \omega_0}{2\omega_i}} \mathrm{erfc}\left(\sqrt{\frac{\nu \omega_0}{2\omega_i}} \right)\right\} \\
        &= \pi \sqrt{\frac{\nu \omega_0}{\omega_i}} e^{\frac{\nu \omega_0}{2\omega_i}} \mathrm{erfc}\left(\sqrt{\frac{\nu \omega_0}{2\omega_i}} \right) - \sqrt{2\pi}.
    \end{aligned}
\end{equation*}
Therefore,
\begin{equation*}
    \begin{aligned}
        -\Exp{ \nabla^2 \ell(y_i ; \bs{\beta}) } &= -\frac{\nu+1}{\sqrt{2\pi} \omega_i} \left\{ \pi \sqrt{\frac{\nu \omega_0}{\omega_i}} e^{\frac{\nu \omega_0}{2\omega_i}} \mathrm{erfc}\left(\sqrt{\frac{\nu \omega_0}{2\omega_i}} \right) - \sqrt{2\pi} \right\} \\
        &= -\frac{\nu+1}{\omega_i} \left\{ \sqrt{\frac{\pi}{2}} \sqrt{\frac{\nu \omega_0}{\omega_i}} e^{\frac{\nu \omega_0}{2\omega_i}} \mathrm{erfc}\left(\sqrt{\frac{\nu \omega_0}{2\omega_i}} \right) - 1 \right\} \\
        &= \frac{\nu+1}{\omega_i} \left\{1-\sqrt{\frac{\nu \omega_0}{\omega_i}} e^{\frac{\nu \omega_0}{2\omega_i}} \left(\int_{-\infty}^{-\sqrt{\frac{\nu \omega_0}{\omega_i}}} e^{-z^2/2} dz \right)\right\}.
    \end{aligned}
\end{equation*}

We have established the form of the asymptotic variance of $\bs{\beta}^*$. It remains to show that the function
\begin{equation*}
    g_{\omega_0, \nu}(\omega) := \omega \left\{1-\sqrt{\frac{\nu \omega_0}{\omega}} e^{\frac{\nu \omega_0}{2\omega}} \left( \int_{-\infty}^{-\sqrt{\frac{\nu \omega_0}{\omega}}} e^{-z^2/2} dz\right) \right\}^{-1}
\end{equation*}
is non-decreasing in $\omega$, and that $g_{\omega_0, \nu}(\omega)/\omega$ is non-increasing in $\omega$. To see that the latter is true, use the function $h$ defined in Lemma \ref{lem:hfun}. Since $\sqrt{\nu \omega_0 / \omega}$ is non-increasing in $\omega$, and since $h$ is non-decreasing in its argument, this implies that 
\begin{equation*}
    h(\sqrt{\nu \omega_0 / \omega}) = \sqrt{\frac{\nu \omega_0}{\omega}} e^{\frac{\nu \omega_0}{2\omega}} \left( \int_{-\infty}^{-\sqrt{\frac{\nu \omega_0}{\omega}}} e^{-z^2/2} dz\right)
\end{equation*}
is non-increasing in $\omega$, which in turn implies that
\begin{equation*}
    g_{\omega_0, \nu}(\omega)/\omega = \{1 - h(\sqrt{\nu \omega_0 / \omega})\}^{-1}
\end{equation*}
is non-increasing as well. Next, from the last property of Lemma \ref{lem:hfun}, $x^2\{1 - h(x)\}$ is non-decreasing in $x$ for $x > 0$. Hence,
\begin{equation*}
    g_{\omega_0, \nu}(\omega) = \nu \omega_0 (\sqrt{\nu \omega_0/\omega})^{-2} \left\{1 - h(\sqrt{\nu \omega_0/\omega})\right\}^{-1},
\end{equation*}
is non-decreasing in $\omega$ for $\omega > 0$.

\end{proof}

\subsection{{Proof of Corollary \ref{cor:wcbound}}}

\begin{proof}
    As a function of $\omega$, the function
    \begin{equation*}
        C(\nu, \omega_0, \omega) =\frac{g_{\omega_0, \nu}(\omega)^2}{\omega f_{\omega_0, \nu}(\omega)}
    \end{equation*}
    is monotone increasing. Therefore, the limiting covariance matrix of $\sqrt{n}\hat{\bs{\beta}}_T$
    \begin{equation*}
        \left\{ \lim_{n \rightarrow \infty} \frac{1}{n} \sum_{i=1}^n \frac{1}{g_{\omega_0, \nu}(\omega_i)} \bl{x}_i \bl{x}_i^\top \right\}^{-1} \left\{ \lim_{n \rightarrow \infty} \frac{1}{n} \sum_{i=1}^n \frac{1}{f_{\omega_0, \nu}(\omega_i)} \bl{x}_i \bl{x}_i^\top \right\} \left\{ \lim_{n \rightarrow \infty} \frac{1}{n} \sum_{i=1}^n \frac{1}{g_{\omega_0, \nu}(\omega_i)} \bl{x}_i \bl{x}_i^\top \right\}^{-1}
    \end{equation*}
    is bounded above in the Loewner order by
    \small{\begin{equation*}
        C(\nu, \omega_0, \omega_{\max})\left\{ \lim_{n \rightarrow \infty} \frac{1}{n} \sum_{i=1}^n \frac{1}{g_{\omega_0, \nu}(\omega_i)} \bl{x}_i \bl{x}_i^\top \right\}^{-1} \left\{ \lim_{n \rightarrow \infty} \frac{1}{n} \sum_{i=1}^n \frac{\omega_i}{g_{\omega_0, \nu}(\omega_i)^2} \bl{x}_i \bl{x}_i^\top \right\} \left\{ \lim_{n \rightarrow \infty} \frac{1}{n} \sum_{i=1}^n \frac{1}{g_{\omega_0, \nu}(\omega_i)} \bl{x}_i \bl{x}_i^\top \right\}^{-1}.
    \end{equation*}}
    Since $g_{\omega_0, \nu}$ satisfies GRM, the weights $\{g_{\omega_0, \nu}(\omega_i)\}_{i=1}^\infty$ are subscedastic, and the trace and determinant of the matrix above are bounded by the trace and determinant of
    \begin{equation*}
        C(\nu, \omega_0, \omega_{\max})\left\{ \lim_{n \rightarrow \infty} \frac{1}{n} \sum_{i=1}^n \bl{x}_i \bl{x}_i^\top \right\}^{-1} \left\{ \lim_{n \rightarrow \infty} \frac{1}{n} \sum_{i=1}^n \omega_i \bl{x}_i \bl{x}_i^\top \right\} \left\{ \lim_{n \rightarrow \infty} \frac{1}{n} \sum_{i=1}^n \bl{x}_i \bl{x}_i^\top \right\}^{-1},
    \end{equation*}
    which is proportional to the limiting covariance of $\sqrt{n}\hat{\bs{\beta}}_{\mathrm{OLS}}$.

\end{proof}

\subsection{{Proof of Proposition \ref{prop:huber}}}

\begin{proof}
Using the formulae in \cite{huber_robust_1964}, the asymptotic variance of $\sqrt{n} \bs{\beta}_{H}^*$ in the model \eqref{eq:smod} with normally distributed errors is $\bl{V}^{-1} \bl{B} \bl{V}^{-1}$, where
\begin{equation*}
    \begin{aligned}
        \bl{B} &= \lim_{n \rightarrow \infty} \frac{1}{n} \sum_{i=1}^n \frac{1}{f_{k}(\omega_i)} \bl{x}_i \bl{x}_i^\top \\
        \bl{V} &= \lim_{n \rightarrow \infty} \frac{1}{n} \sum_{i=1}^n \frac{1}{g_{k}(\omega_i)} \bl{x}_i \bl{x}_i^\top,
    \end{aligned}
\end{equation*}
and
\begin{equation*}
    \begin{aligned}
        f_k(\omega) &= \left\{\frac{1}{\sqrt{2\pi\omega}}\int_{-k}^k z^2 e^{-z^2/2\omega} dz + k^2 \int_{-\infty}^{-k} e^{-z^2/2\omega} dz + k^2 \int_{k}^\infty z^2 e^{-z^2/2\omega} dz\right\}^{-1}\\
        g_k(\omega) &= \left\{\frac{1}{\sqrt{2\pi\omega}}\int_{-k}^k e^{-z^2/2\omega} dz\right\}^{-1} .\\
    \end{aligned}
\end{equation*}

To see that $g_k(\omega)$ is monotone increasing, while $g_k(\omega) / \omega$ is monotone decreasing, first note that by a transformation of variables
\begin{equation*}
    g_k(\omega) = \left\{ \frac{1}{\sqrt{2 \pi}} \int_{-k/\sqrt{\omega}}^{k/\sqrt{\omega}} e^{-z^2/2} dz \right\}^{-1}.
\end{equation*}
Since the integrand above is positive, the integral within the braces above must be decreasing in $\omega$. Hence $g_k(\omega)$ is increasing in $\omega$.

Next, see that
\begin{equation*}
\begin{aligned}
    \omega / g_k(\omega) = \sqrt{\frac{\omega}{2 \pi}} \int_{-k}^k e^{-z^2/2\omega} dz
\end{aligned}
\end{equation*}
Differentiating with respect to $\omega$, we obtain
\begin{equation*}
    \frac{1}{2\sqrt{2\pi \omega}} \int_{-k}^k e^{-z^2/2\omega} dz + \sqrt{\frac{\omega}{2 \pi}} \int_{-k}^k \{z^2/(2\omega^2)\}e^{-z^2/2\omega} dz.
\end{equation*}
Since this is positive for all $\omega > 0$, we conclude that $w / g_k(\omega)$ is increasing in $\omega$.
Hence, $g_k(\omega)/w$ is decreasing in $\omega$, and $g_k(\omega)$ satisfies \eqref{eq:grm}.
\end{proof}

\subsection{{Proof of Corollary \ref{cor:wcboundh}}}

\begin{proof}
    As a function of $\omega$, the function
    \begin{equation*}
        C(k, \omega) =\frac{g_{k}(\omega)^2}{\omega f_{k}(\omega)}
    \end{equation*}
    is monotone increasing. Therefore, the limiting covariance matrix of $\sqrt{n}\hat{\bs{\beta}}_H$
    \begin{equation*}
        \left\{ \lim_{n \rightarrow \infty} \frac{1}{n} \sum_{i=1}^n \frac{1}{g_{k}(\omega_i)} \bl{x}_i \bl{x}_i^\top \right\}^{-1} \left\{ \lim_{n \rightarrow \infty} \frac{1}{n} \sum_{i=1}^n \frac{1}{f_{k}(\omega_i)} \bl{x}_i \bl{x}_i^\top \right\} \left\{ \lim_{n \rightarrow \infty} \frac{1}{n} \sum_{i=1}^n \frac{1}{g_{k}(\omega_i)} \bl{x}_i \bl{x}_i^\top \right\}^{-1}
    \end{equation*}
    is bounded above in the Loewner order by
    \small{\begin{equation*}
        C(k, \omega_{\max})\left\{ \lim_{n \rightarrow \infty} \frac{1}{n} \sum_{i=1}^n \frac{1}{g_{k}(\omega_i)} \bl{x}_i \bl{x}_i^\top \right\}^{-1} \left\{ \lim_{n \rightarrow \infty} \frac{1}{n} \sum_{i=1}^n \frac{\omega_i}{g_{k}(\omega_i)^2} \bl{x}_i \bl{x}_i^\top \right\} \left\{ \lim_{n \rightarrow \infty} \frac{1}{n} \sum_{i=1}^n \frac{1}{g_{k}(\omega_i)} \bl{x}_i \bl{x}_i^\top \right\}^{-1}.
    \end{equation*}}
    Since $g_{k}$ satisfies GRM, the weights $\{g_{k}(\omega_i)\}_{i=1}^\infty$ are subscedastic, and the trace and determinant of the matrix above are bounded by the trace and determinant of
    \begin{equation*}
        C(k, \omega_{\max})\left\{ \lim_{n \rightarrow \infty} \frac{1}{n} \sum_{i=1}^n \bl{x}_i \bl{x}_i^\top \right\}^{-1} \left\{ \lim_{n \rightarrow \infty} \frac{1}{n} \sum_{i=1}^n \omega_i \bl{x}_i \bl{x}_i^\top \right\} \left\{ \lim_{n \rightarrow \infty} \frac{1}{n} \sum_{i=1}^n \bl{x}_i \bl{x}_i^\top \right\}^{-1},
    \end{equation*}
    which is proportional to the limiting covariance of $\sqrt{n}\hat{\bs{\beta}}_{\mathrm{OLS}}$.
\end{proof}

\section{EM algorithm for $t$ regression}\label{asec:em}

The algorithm described here has been developed in more detail in \cite{liu_ml_1995}. We include its derivation here for completeness and for consistency with our notation. Consider the hierarchical linear model
\begin{equation*}
    \begin{aligned}
    \bl{y} | \bs{\beta}, \bs{\Omega} &\sim N(\bl{X} \bs{\beta}, \bs{\Omega}),\\
    \omega_1, \dots, \omega_n &\overset{iid}{\sim} IG(\nu/2, \nu \omega_0 / 2).
    \end{aligned} 
\end{equation*}
Given $\omega_1, \dots, \omega_n$, one can find the maximizers $\bs{\beta}^*, \nu^*, \omega_0^{*}$ of the likelihood function
\begin{equation}\label{eq:jlik}
    L(\bs{\beta}, \nu, \omega_0; \bs{\Omega}, \bl{y}) = p(\bl{y} | \bs{\Omega}, \bs{\beta}) p(\bs{\Omega} | \nu, \omega_0).
\end{equation}
We also have a simple form for the conditional distribution of $\omega_i$ given the unknown parameters $\bs{\beta}, \nu, \omega_0$ and the data $\bl{y}$
\begin{equation}\label{eq:conddist}
    \omega_i | \bl{y}, \bs{\beta}, \nu, \omega_0 \sim IG((\nu+1)/2, ((y_i - \bl{x}_i^\top \bs{\beta})^2 + \nu \omega_0)/2).
\end{equation}
This suggests an EM algorithm as a means to obtain maximum marginal likelihood estimates of $\bs{\beta}, \nu, \omega_0$.

The E-step computes the expectation of the log of \eqref{eq:jlik} with respect to the distribution in \eqref{eq:conddist} given a current set of iterates $\bs{\beta}^{(t)}, \nu^{(t)}, \omega_0^{(t)}$. The log likelihood is
\begin{equation*}
-\frac{1}{2}\sum_{i=1}^n [\log(2 \pi \omega_i) + (y_i - \bl{x}_i^\top \bs{\beta})^2/\omega_i] +\frac{1}{2}\sum_{i=1}^n [\nu\log(\nu \omega_0 / 2) - 2\log \Gamma (\nu / 2) - (\nu + 2)\log \omega_i - \nu \omega_0 / \omega_i].
\end{equation*}
Up to addition of constants, this can be written more compactly as
\begin{equation*}
\frac{1}{2} \sum_{i=1}^n [-(\nu+3) \log \omega_i - ((y_i - \bl{x}_i^\top \bs{\beta})^2 + \nu \omega_0)/\omega_i + \nu \log(\nu \omega_0/2) - 2\log \Gamma(\nu/2)].
\end{equation*}
With respect to \eqref{eq:conddist}, this has expectation
\begin{equation}\label{eq:estep}
\begin{aligned}
   \frac{1}{2} \sum_{i=1}^n \left[-(\nu+3) (\log(((y_i - \bl{x}_i^\top \bs{\beta}^{(t)})^2 + \nu^{(t)} \omega_0^{(t)})/2) - \psi((\nu^{(t)}+1)/2)) -\right. \\\left.(\nu^{(t)}+1)\frac{(y_i - \bl{x}_i^\top \bs{\beta})^2 +
    \nu \omega_0}{(y_i - \bl{x}_i^\top \bs{\beta}^{(t)})^2 + \nu^{(t)} \omega_0^{(t)}} + \nu \log(\nu \omega_0/2) - 2\log \Gamma(\nu/2)\right].
\end{aligned}
\end{equation}
The M-step maximizes \eqref{eq:estep} with respect to $\bs \beta, \nu, \omega_0$. The maximizer in $\bs{\beta}$ of \eqref{eq:estep} can be obtained as the solution to a WLS regression
\begin{equation*}
\begin{aligned}
    \bs{\beta}^{(t+1)} = \underset{\bs{\beta}}{\mathrm{argmin}} ~ \sum_{i=1}^n \alpha_i (y_i - \bl{x}_i^\top \bs{\beta})^2, \\
    \alpha_i = [(y_i - \bl{x}_i^\top \bs{\beta}^{(t)})^2 + \nu^{(t)} \omega_0^{(t)}]^{-1}.
\end{aligned}
\end{equation*}
In relation to the discussion at the beginning of Section \ref{sec:robust}, it is apparent that at the final step of the algorithm, $T$, $\bs{\beta}^{(T)}$ is precisely a regularized FLS estimate, with weights equal to $(y_i - \bl{x}_i^\top \bs{\beta}^{(T-1)})^2$ for $i \in \{1, \dots, n\}$ and regularization term equal to $\nu^{(T-1)} \omega_0^{(T-1)}$.

A closed form expression for the maximizer of \eqref{eq:estep} in $\omega_0$ also exists
\begin{equation*}
    \omega_0^{(t+1)} = \left[\sum_{i=1}^n\frac{(\nu^{(t)}+1)/n}{(y_i - \bl{x}_i^\top \bs{\beta}^{(t)})^2 + \nu^{(t)} \omega_0^{(t)}}\right]^{-1}.
\end{equation*}
The maximizer of \eqref{eq:estep} in $\nu$ has no such closed form expression, but it may be obtained numerically with standard software.

\pagebreak

\section{Mixed effects model}\label{asec:mixed}

For a last numerical example, we return to the linear mixed effects model discussed in Section \ref{sec:impl}. Instead of simulating from the heteroscedastic linear model as in the previous examples, we simulate data according to \eqref{eq:mixmod}, where, in this case, the batch indicator matrix $\bl{A}$ has 12 columns, each of which corresponds to a different center from which the \cite{longnecker_association_2001} data were collected. To add some additional complexity, we modify \eqref{eq:mixmod} slightly by letting each center's random intercept have its own variance, so that
\begin{equation*}
    \Cov{\bl{z}} = \mathrm{diag}(\theta_1^2, \dots, \theta_{12}^2),
\end{equation*}
where we draw each $\theta_k^2$ independently from an $IG(1, 3)$ distribution prior to the simulation. This has the effect of changing the order of the eigenvalues of the marginal error covariance relative to the center-specific sample sizes. In such a circumstance, it could be preferable to use an adaptive estimate like the $t$ estimate as opposed to a fixed weight estimate like the one discussed in Section \ref{sec:impl} because the order of the $\theta_k$'s may be unknown.

\begin{figure}
    \centering
    \includegraphics[scale=0.6]{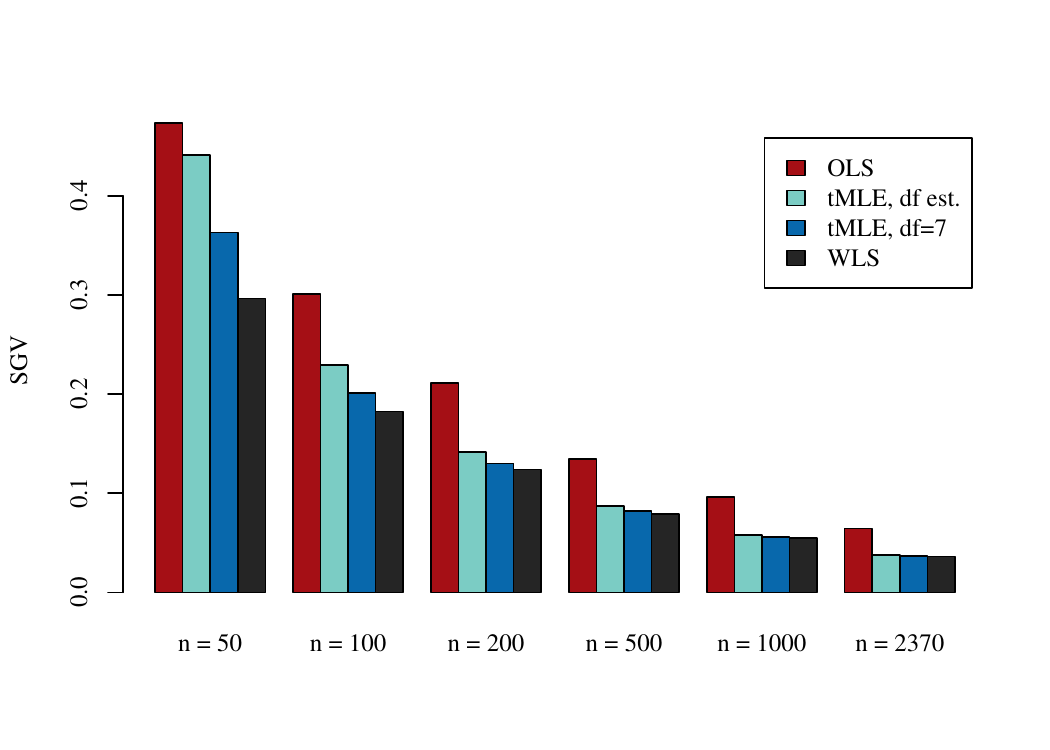}
    \caption{Results of the third simulation study using the \cite{longnecker_association_2001} dataset, which simulates from a modified version of the mixed effects model in \eqref{eq:mixmod}.}
    \label{fig:longnecker_mixed}
\end{figure}

Setting $\theta_0^2 = 1$, we simulate from \eqref{eq:mixmod} with the modifications described above and evaluate the SGVs of the $t$-derived estimates, the OLS estimate, and WLS estimate. Each estimate is computed on data that has been pre-multiplied by the matrix whose columns are the eigenvectors of the marginal error covariance matrix, which in this case is known. As we observed in the previous simulations, the $t$ estimate with $7$ degrees of freedom outperforms OLS for all values of $n$ considered. The $t$ estimate with estimated degrees of freedom performs less favorably for small $n$. See the Appendix, Section \ref{asec:figs} for a complementary plot displaying the mean squared errors for all estimates.

\section{Additional Figures} \label{asec:figs}

We include two supplementary figures showing the mean squared error of the estimates from the second and third simulation studies using the \cite{longnecker_association_2001} dataset. The mean squared error criterion is equal to the total variance of the estimates plus the mean squared bias, which vanishes as $n$ gets large for all estimates. The relative ordering of the mean squared error values mirrors that of the SGV values from the main text.

\begin{figure}[!htb]
    \centering
    \includegraphics[scale=0.55]{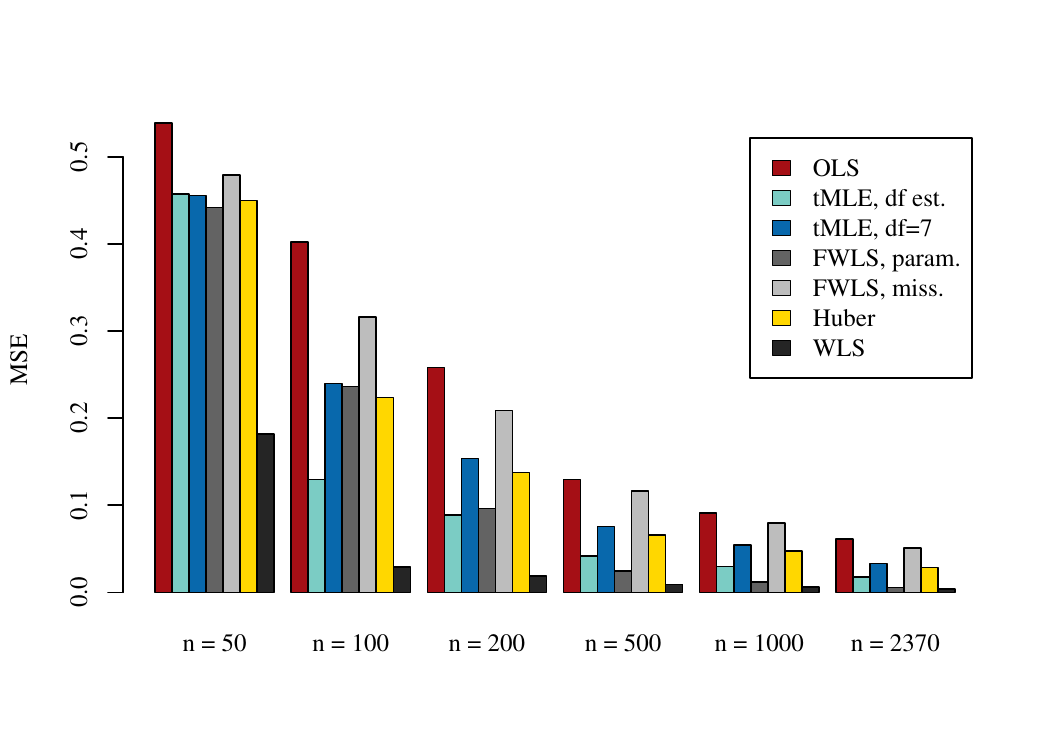}
    \caption{Mean squared error results of the second simulation study using the \cite{longnecker_association_2001} dataset, which uses a parametric model to specify the heteroscedasticity.}
\end{figure}

\begin{figure}[!htb]
    \centering
    \includegraphics[scale=0.55]{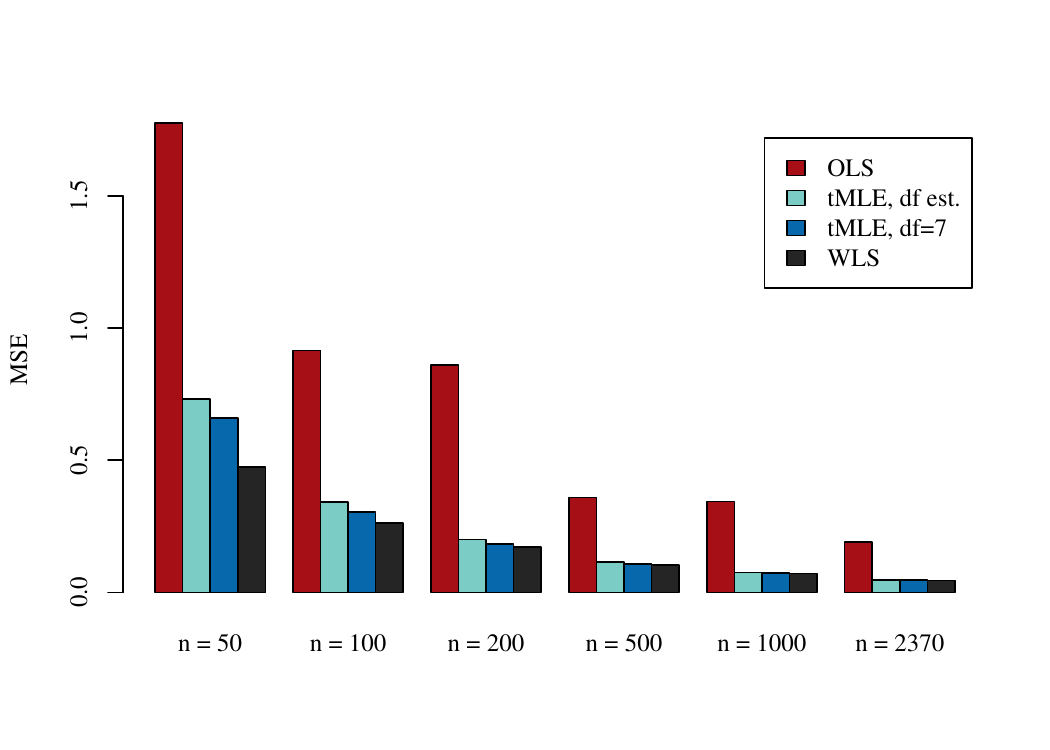}
    \caption{Mean squared error results of the third simulation study using the \cite{longnecker_association_2001} dataset, which simulates from a modified version of the mixed effects model in \eqref{eq:mixmod}.}
\end{figure}

\end{document}